    \numberwithin{equation}{section}
\bmdefine\alphab{\mathbf{\alpha}}
\bmdefine\betab{\mathbf{\beta}}
\bmdefine\pib{\mathbf{\pi}}
\bmdefine\xib{\mathbf{\xi}}
\bmdefine\sigmab{\mathbf{\sigma}}
\newcommand{\comment}[1]{}
\newcommand{\eq}{\begin{equation}}
\newcommand{\en}{\end{equation}}
\newcommand{\rr}{\mathbb{R}}
\newcommand{\pp}{\mathbb{P}}
\newcommand{\nn}{\mathbb{N}}
\newcommand{\abs}[1]{\left\lvert #1 \right\rvert}
\newcommand{\mcal}[1]{\mathcal{#1}}
\newcommand{\iprod}[1]{\left\langle #1 \right\rangle }
\newcommand{\ev}{\mathbb E}
\newcommand{\lamhat}{\widehat\Lambda}
\newcommand{\lhat}{\widehat{L}}
\newcommand{\ep}{\hfill $\Box$}
\newcommand{\intw}[1]{\left\langle #1 \right\rangle}
\newcommand{\tbf}[1]{\textbf{#1}}
\newcommand{\nin}{\noindent}
\newcommand{\bayL}{\widehat{L}}
\newcommand{\phat}{\widehat{P}}
\newcommand{\qhat}{\widehat{Q}}
\newcommand{\rhat}{\widehat{R}}
\newcommand{\mX}{\mathcal{X}}
\newcommand{\mY}{\mathcal{Y}}
\newcommand{\gens}{\mathcal{A}^S}
\DeclareMathOperator{\sgn}{sgn}
\begin{document}

\theoremstyle{plain}
\newtheorem{thm}{Theorem}
\newtheorem{rem}{Remark}
\newtheorem{lemma}[thm]{Lemma}
\newtheorem{prop}[thm]{Proposition}
\newtheorem{cor}[thm]{Corollary}

\theoremstyle{definition}
\newtheorem{defn}{Definition}
\newtheorem{cond}{Condition}
\newtheorem{asmp}{Assumption}
\newtheorem{notn}{Notation}
\newtheorem{prb}{Problem}

\theoremstyle{remark}
\newtheorem{rmk}{Remark}
\newtheorem{exm}{Example}
\newtheorem{clm}{Claim}

\title[Intertwining diffusions]{Intertwining diffusions and wave equations}

\author{Benjamin Budway, Soumik Pal, and Mykhaylo Shkolnikov}
\address{ORFE Department \\ Princeton University \\ Princeton, NJ 08544}
\email{bb2584@princeton.edu}
\address{Department of Mathematics\\ University of Washington\\ Seattle, WA 98195}
\email{soumikpal@gmail.com}
\address{Department of Mathematical Sciences and Center for Nonlinear Analysis\\Carnegie Mellon University \\  Pittsburgh, PA 15232}
\email{mshkolni@gmail.com}

\keywords{Diffusion processes, duality, growth models, hyperbolic PDEs, intertwining, time-reversal, transmutation, wave equations}  

\subjclass[2010]{60J60, 35L10, 35L20, 60B10}

\thanks{Soumik's research is partially supported by NSF grants DMS-2052239, DMS-2134012, DMS-2133244, and
PIMS PRN-01 granted to the Kantorovich Initiative. Mykhaylo's research is partially supported by NSF grant DMS-2108680.}

\date{\today}

\begin{abstract}
We develop a general theory of intertwined diffusion processes of any dimension. Our main result gives an SDE construction of intertwinings of diffusion processes and shows that they correspond to nonnegative solutions of hyperbolic partial differential equations. For example, solutions of the classical wave equation correspond to the intertwinings of two Brownian motions. The theory allows us to unify many older examples of intertwinings, such as the process extension of the beta-gamma algebra, with more recent examples such as the ones arising in the study of two-dimensional growth models. We also find many new classes of intertwinings and develop systematic procedures for building more complex intertwinings by combining simpler ones. In particular, `orthogonal waves' combine unidimensional intertwinings to produce multidimensional ones. Connections with duality, time reversals, and Doob's h-transforms are also explored. 
\end{abstract}

\maketitle

%%%%%%%%%%%%%%%%%%%%%%%%%%%%%%%%%%%%%%%
\section{Introduction}
\label{sec1}
%%%%%%%%%%%%%%%%%%%%%%%%%%%%%%%%%%%%%%%

We start with the definition of intertwining of two Markov semigroups that is reminiscent of a similarity transform of two finite-dimensional matrices. 

\begin{defn}\label{def:intertwin}
Let $\left(Q_t,\; t\ge 0\right)$, $\left(P_t,\; t\ge 0 \right)$ be two Markov semigroups on measurable spaces $\left(\mcal{E}_1,\mcal{B}_1\right)$, $\left(\mcal{E}_2,\mcal{B}_2 \right)$, respectively. Suppose $L$ is a stochastic transition operator that maps bounded measurable functions on $\mcal{E}_2$ to those on $\mcal{E}_1$. We say that the ordered pair $(Q,P)$ is intertwined with link $L$ if for all $t\geq0$ the relation $Q_t\,L=L\,P_t$ holds {(where both sides are viewed as operators acting on bounded measurable functions on $\mcal{E}_2$)}. If this is the case, we write $Q\intw{L}P$.  
\end{defn}

It is clear that intertwinings are special constructions which transfer a lot of spectral information from one semigroup to the other. Naturally one is interested in two kinds of broad questions: (a) Given two semigroups can we determine if they are intertwined via some link? (b) Can we find a coupling of two Markov processes, with transition semigroups $(Q_t)$ and $(P_t)$, respectively, such that the coupling construction naturally reflects the intertwining relationship? One should also ask what influence the analytic definition of intertwining has on the path properties of this coupling.

Question (a) is known to have an affirmative answer when the transition probabilities of a Markov process have symmetries. One can then intertwine this process with another process running on the quotient space. Other criteria were given based on the explicit knowledge of eigenvalues of the semigroup. Neither symmetries nor eigenvalues are generally available, and, hence, the answer to question (a) for general Markov processes is unknown. In the next subsection we outline briefly the development in this area over the last few decades.  

On the other hand, {Diaconis} and {Fill} \cite{DF} initiated a program of constructing couplings of two Markov chains whose semigroups $(Q_t)$ and $(P_t)$ satisfy $Q \iprod{L} P$. Such couplings lead to remarkable objects called strong stationary times which can be then used to determine the convergence rate of the Markov chain with transition semigroup $(P_t)$. 

\begin{figure}[t]
\centerline{
\xymatrix@=8em{
Z_2(s) \ar[d]_{L} \ar[r]^{ Q_t } & Z_2(s+t) \ar[d]^{L} \\
Z_1(s) \ar[r]^{P_t} & Z_1(s+t)
}
}
\caption{Commutative diagram of intertwining.}
\label{fig:commintw}
\end{figure}

Our main result settles both questions (a) and (b) when the semigroups are diffusion semigroups and we insist on the coupling to be a joint diffusion satisfying some natural conditional independence properties. We provide a general theory of intertwinings in the setting of diffusion processes allowing also for (possibly oblique) reflection at the boundary of their domains and on each other. %In fact, our result provides a complete characterization of intertwinings for diffusion processes in terms of their joint stochastic differential equations. 
This allows us to reprove many intertwining relations known so far, as well as to produce several large classes of new examples. The coupling that we propose can be thought of as a continuous time limit of the Diaconis-Fill construction. In this setting, the construction displays several remarkable properties, including stability under dimension reduction and time-reversals. Interestingly, it turns out that in this setup the link kernels are solutions to hyperbolic partial differential equations, such as the classical wave equation in the case of  intertwinings of two Brownian motions (see Theorems \ref{main1} and \ref{main2} below for the details). 
%This is interesting in itself since, to the best of our knowledge, solutions of hyperbolic equations (or, wave equations) have not had any probabilistic representation so far.  

Throughout the paper we consider diffusion semigroups on finite-dimensional Euclidean spaces. Here, by a diffusion semigroup we mean a semigroup generated by a second order elliptic partial differential operator with no zero-order terms and either no boundary conditions or (possibly oblique) Neumann boundary conditions. Before we describe our coupling construction we recall a key concept in the {Diaconis-Fill} construction, namely the commutative diagram in Figure \ref{fig:commintw}, which we have extended to the continuous time setting. 

We consider two Markov processes in continuous time, $Z_1$ and $Z_2$, with transition semigroups $(P_t)$ and $(Q_t)$, respectively. The direction of arrows represents the action on measures (as opposed to that on functions). The diagram captures the following equivalence of sampling schemes: starting from $Z_2(s)$ it is possible to generate a sample of $Z_1(s+t)$ in two equivalent ways. Either sample $Z_2(s+t)$, conditionally on $Z_2(s)$ and then sample $Z_1(s+t)$ according to $L$. Or, sample $Z_1(s)$, conditionally on $Z_2(s)$, via $L$, and follow $Z_1$ to time $(s+t)$. It is a part of the construction that both $\left( Z_2(s), Z_2(s+t), Z_1(s+t)  \right)$ and $\left( Z_2(s), Z_1(s), Z_1(s+t) \right)$ are three step Markov chains. This insistence produces a coupling with nice path properties that can be further exploited. 
 
The above discussion motivates the following definition of a coupling realization of $Q\intw{L}P$ in terms of random processes. Let $\left(X(t),\; t\ge 0  \right)$ and $\left( Y(t),\; t\ge 0 \right)$ represent two time-homogeneous diffusions with locally compact state spaces ${\mathcal X}\subset\rr^m$, ${\mathcal Y}\subset\rr^n$ and transition semigroups $\left(P_t,\; t\geq0\right)$, $\left( Q_t,\; t\geq0\right)$, respectively. We abuse the notation slightly. Although, $X$ and $Y$ are diffusions, their laws are unspecified because we do not specify their initial distributions. They are merely processes with the correct transition semigroup. We also suppose that $L$ is a probability transition operator. 

\begin{defn}\label{idef}
We call a ${\mathcal X}\times{\mathcal Y}$-valued diffusion process $Z=(Z_1,Z_2)$ an intertwining of the diffusions $X$ and $Y$ with link $L$ (we write $Z=Y \intw{L} X$) if the following hold.
\begin{enumerate}[(i)]
\item \label{i1} $Z_1\stackrel{d}{=}X$ and $Z_2\stackrel{d}{=}Y$ where $\stackrel{d}{=}$ refers to identity in law, and 
\[
\ev\left[  f\left( Z_1(0)\right) \mid Z_2(0)=y\right]= (Lf)(y),
\]
for all bounded Borel measurable function $f$ on $\mcal{X}$. 
\item \label{i2} The transition semigroups are intertwined: $Q\intw{L}P$.
\item \label{i3} The process $Z_1$ is Markovian with respect to the joint filtration generated by $(Z_1, Z_2)$. 
\item \label{i4} For any $t\geq0$, conditional on $Z_2(t)$, the random variable $Z_1(t)$ is independent of $\left(Z_2(s),\; 0\le s \le t\right)$, and is conditionally distributed according to $L$.

%\item \label{i5} {For any $t\ge0$, conditional on $Z_2(0)$ and $Z_1(t)$, the random variables $Z_1(0)$ and $Z_2(t)$ are independent.} \textcolor{blue}{To be replaced}
\end{enumerate} 
\end{defn}

%The conditional independence condition \eqref{i3} captures the idea that $Z_2$ is obtained from $Z_1$ with possible excess noise and \eqref{i4} is the notion that the process that concatenates $Z_1(t)$ to $\{ Z_2(s),\; 0\le s \le t \}$ is Markov. See Figure \ref{fig:commintw} for an illustration. 

%{and condition \eqref{i5} ensures that the joint law of $Z(0)$ and $Z(t)$ is uniquely determined via \eqref{i3} and \eqref{i4}}. 

\medskip

Our primary results Theorem \ref{main1} and Theorem \ref{main2} answer the questions (b) and (a), respectively, raised at the beginning of the introduction. Given a locally compact $A$ in $\mathbb{R}^d$, it can be written as $A = O \cap \overline{A}$ where $O$ is an open subset of $\mathbb{R}^d$ and $\overline{E}$ denotes the closure of a set $E$ (see \cite[Theorem 18.4]{WI}). When we say that a function is continuous (resp. $C^m$) on $A$, we mean that it is the restriction of a continuous (resp. $C^m$) function on $O$ to $A$. Suppose we are given the two generators
\begin{eqnarray}
&&{\mathcal A}^X = \sum_{i=1}^m b_i(x)\partial_{x_i} + \frac{1}{2}\sum_{i,j=1}^m a_{ij}(x)\partial_{x_i}\partial_{x_j}\quad \text{and}
\label{Xgen}\\
&&{\mathcal A}^Y = \sum_{k=1}^n \gamma_k(y)\partial_{y_k} + \frac{1}{2}\sum_{k,l=1}^n \rho_{kl}(y)\partial_{y_k}\partial_{y_l},
\label{Ygen}
\end{eqnarray}
where $(b_i)_{i=1}^m$ is an $\rr^m$-valued function {continuous} on $\mcal{X}$, $(\gamma_k)_{k=1}^n$ is an $\rr^n$-valued function {continuous} on $\mcal{Y}$, $(a_{ij})_{1\leq i,j\leq m}$ and $(\rho_{kl})_{1\leq k,l\leq n}$ are  functions taking values in the set of positive semidefinite $m\times m$ and $n\times n$ matrices {continuous} on $\mcal{X}$ and $\mcal{Y}$, respectively. We make the following assumption.

\begin{asmp}\label{main_asmp}
Assume that each $X$ and $Y$ satisfy either one of the following two conditions.

\comment{satisfy either one of the following two conditions.}
\begin{enumerate}[(a)]
\item \textbf{No boundary conditions.} The domain $\mcal{X}$ (resp. $\mcal{Y}$) is open, and the SDE on $\mcal{X}$ with $\mcal{A}^X$  as its generator is well-posed and never reaches the boundary. Moreover, the solution $X$  is a Feller-Markov process. That is, its semigroup preserves the space $C_0(\mcal{X})$ of continuous functions vanishing at infinity. For $Y$ replace $\mcal{A}^X$ by $\mcal{A}^Y$, $\mX$ by $\mY$, and so on.   We also assume that $C_c^\infty(\mcal{X})$ (resp. $C_c^\infty(\mcal{Y})$) is a core (see \cite[page 374]{Ka}) of the domain of $\mcal{A}^X$ (resp. $\mcal{A}^Y$). \color{black}
\item \textbf{Neumann boundary conditions.} The domain $\mcal{X}$ is closed with $C^2$ boundary. Moreover, for some $C^2$ vector field $U_1:\,\partial\mcal{X}\rightarrow\rr^m$ whose scalar product with the unit inward normal vector field is uniformly positive on $\partial\mcal{X}$, the stochastic differential equation with reflection corresponding to $\mcal{A}^X$ with Neumann boundary conditions with respect to $U_1$ is well-posed in the sense of \cite{RK}. In addition, the solution $X$ is a Feller-Markov process. That is, its semigroup preserves the space $C_0(\mcal{X})$ of continuous functions vanishing at infinity. {Finally, the generator $\mcal{A}^X$ is regular in the sense that the intersection of the space $C_c^\infty(\mcal{X})$ of infinitely differentiable functions on $\mcal{X}$ with compact support with the domain of ${\mathcal A}^X$ in $C_0(\mcal{X})$ is dense in that domain with respect to the uniform norm on $C_0(\mcal{X})$.} For $Y$ replace $\partial \mX$ by $\partial \mY$, $U_1$ by $U_2$, and so on.\color{black}
\end{enumerate}
\end{asmp} 

\begin{asmp}\label{asmpthm1} 
We consider the following regularity conditions on the kernel $L$. 
\begin{enumerate}[(i)]
\item Suppose that $L$ is given by an integral operator 
\[
(Lf)(y) = \int_{\mathcal X} \Lambda(y,x)\,f(x)\,\mathrm{d}x
\]
{mapping $C_0(\mcal{X})$ into $C_0(\mcal{Y})$}.
\item Assume $\Lambda(\cdot,x)$ is {strictly positive and continuously differentiable} on $\mcal{Y}$ for every fixed $x$ in $\mcal{X}$. Set $V=\log \Lambda$ and let $\nabla_y V$ denote the gradient of $V$ with respect to $y$. 

\color{black}
\item $\Lambda(\cdot,x)$ is in the domain of ${\mathcal A}^Y$ for all $x\in{\mathcal X}$ {with ${\mathcal A}^Y\Lambda$ being continuous on $\mcal{Y}\times\mcal{X}$ and bounded on $\mcal{Y}\times K$ for any compact $K\subset\mcal{X}$}.
\item For all $y\in\mcal{Y}$, $\Lambda(y,\cdot)$ belongs to the domain of $\left({\mathcal A}^X\right)^*$, the adjoint of $\mcal{A}^X$ acting on measures (see, e.g., \cite[Definition B.8]{EN}).
\color{black}
\end{enumerate}
\end{asmp}

\comment{
}
As mentioned in the introduction, the intertwinings we will construct should be thought of as the natural continuous time extension of the construction performed in \cite{DF}. If one assumes that a Markov process $Z$ is an intertwining as in Definition \ref{idef} and additionally assumes that $Z_2(t)$ is conditionally independent of $Z_1(0)$ given $(Z_1(t),Z_2(0))$, then one can explicitly write down the transition kernel of $Z$ using Bayes' rule as
\begin{equation}\label{approxkernel}
\tilde{R}_t((x_0,y_0),\mathrm{d}(x_1,y_1)) = \frac{Q_t(y_0,\mathrm{d}y_1)P_t(x_0,\mathrm{d}x_1)\Lambda(y_1,x_1)}{\int_{\mcal{Y}} Q_t(y_0, \mathrm{d}y)\Lambda(y,x_1)}.
\end{equation}
This formula is nearly identical to the transition matrix proposed in \cite{DF}. However, as pointed out in \cite{JF}, this formula cannot be used to construct intertwinings in continuous time due to the fact that ($\tilde{R}_t$) does not necessarily satisfy the Chapman-Kolmogorov equations. Instead of studying a non-Markovian process satisfying this conditional independence property, we consider the following ``infinitesimal" conditional independence condition.

\smallskip

\begin{enumerate}[]
\item  A Feller-Markov process $Z$ is said to satisfy the infinitesimal Bayes' condition if for any function $h\in C_c^\infty(\mcal{X}\times\mcal{Y})\cap \mcal{D}(\mcal{A}^Z)$, in the regime as $t\downarrow 0$, the conditional expectation $\ev[h(Z(t))\!\mid\!Z(0)=(x_0,y_0)]$ is equal to    
\eq\label{eq:condlaw}
\int_{{\mathcal X}\times{\mathcal Y}} h(x_1,y_1)\,\tilde{R}_t((x_0,y_0),\mathrm{d}(x_1,y_1)) + o(t). 
\en
Here, the error term $o(t)$ is allowed to depend on $h$ as well as on $(x_0,y_0)$.
\end{enumerate}

\medskip

We now present our main theorems. Denote the transpose of a vector $x$ by $x'$. Suppose Assumptions \ref{main_asmp} and \ref{asmpthm1} are satisfied. Consider $z\in \rr^{m+n}$ as $z=(x,y)$ where $x\in \rr^m$ and $y \in \rr^n$. 

\begin{thm}\label{main1} Let $X$, $Y$ be the (reflected) diffusions given by the solutions of the above martingale (resp. submartingale) problems. Let $Z=(Z_1,Z_2)$ be a diffusion process on ${\mathcal X}\times{\mathcal Y}$ with generator
\eq\label{Zgen}
\begin{split}
{\mathcal A}^Z &= \mcal{A}^X + \mcal{A}^Y + \big(\nabla_y V(y,x)\big)'\,\rho(y)\,\nabla_y
\end{split}
\en
and boundary conditions on $\partial{\mathcal X}\times{\mathcal Y}$ (resp. ${\mathcal X}\times\partial{\mathcal Y}$) coinciding with those of $X$ on $\partial{\mathcal X}$ (resp. $Y$ on $\partial{\mathcal Y}$). Suppose that $C_c^{\infty}(\mcal{X}\times\mcal{Y}) \cap \mcal{D}(\mcal{A}^Z)$ is a core for $\mcal{D}(\mcal{A}^Z)$. Moreover, let the initial condition of the diffusion $Z$ satisfy
\[
P\left( Z_1(0) \in B \mid Z_2(0)=y \right)= \int_B \Lambda(y,x)\,\mathrm{d}x, \quad \text{for all Borel $B\subseteq \rr^m$}.
\]
If $\Lambda$ is such that the density of the measure $\left({\mathcal A}^X\right)^*\Lambda(y,\cdot)$ is given by $({\mathcal A}^Y\Lambda)(y,\cdot)$, in short:
%\begin{enumerate}[(i)]
%\item $\Lambda(\cdot,x)$ is in the domain of ${\mathcal A}^Y$ for all $x\in{\mathcal X}$ {with ${\mathcal A}^Y\Lambda$ being continuous on $\mcal{Y}\times\mcal{X}$ and bounded on $\mcal{Y}\times K$ for any compact $K\subset\mcal{X}$}, 
%\item $\Lambda(y,\cdot)$ is in the domain of $\left({\mathcal A}^X\right)^*$ ({the adjoint of ${\mathcal A}^X$ acting on measures, \cite[Definition B.8]{EN}}) for all $y\in{\mathcal Y}$, and  

%\item the density of the measure $\left({\mathcal A}^X\right)^*\Lambda(y,\cdot)$ is given by $({\mathcal A}^Y\Lambda)(y,\cdot)$, in short:

\eq\label{PDE}
\left({\mathcal A}^X\right)^*\,\Lambda = {\mathcal A}^Y\,\Lambda \quad \text{on}\quad {\mathcal X}\times{\mathcal Y}\,,
\en
%\end{enumerate}
then $Z=Y\intw{L} X$ and $Z$ satisfies the infinitesimal Bayes' condition \eqref{eq:condlaw}. 
\end{thm}

As a quick example, consider the Cauchy density kernel 
\[
\Lambda(y,x)=\frac{1}{ \pi \left( 1+ (y-x)^2\right)}.
\]
It satisfies the one-dimensional wave equation. Consider the diffusion given by
\[
\mathrm{d}Z_1(t)=\mathrm{d}\beta_1(t),\quad
\mathrm{d}Z_2(t)=\mathrm{d}\beta_2(t) - \left(\frac{2 \left( Z_2(t) - Z_1(t) \right)}{1 + \left( Z_2(t) - Z_1(t) \right)^2}\right)\,\mathrm{d}t,
\]
where $\beta_1$, $\beta_2$ are two independent one-dimensional standard Brownian motions. Then, by Theorem \ref{main1}, for appropriate initial conditions the marginal law of $Z_2$ is that of a standard Brownian motion and the conditional law of $Z_1(t)$ given $Z_2(t)$ is Cauchy for every $t\geq0$. 
\medskip

% Soumik modified the following paragraph
% Ben has modified condition v)

\comment{The construction in Theorem \ref{main1} is a continuum analogue of the construction by Diaconis and Fill \cite{DF}. In their construction one step transition probabilities of the intertwined pair of Markov chains are given by an application of the Bayes' rule (see \cite[eqn. (2.21)]{DF}).} Our next theorem shows, under regularity conditions, that the infinitesimal Bayes' condition forces the generator of the intertwined diffusion to be given by \eqref{Zgen}. Let the generators ${\mathcal A}^X$, ${\mathcal A}^Y$ of \eqref{Xgen}, \eqref{Ygen} satisfy Assumption \ref{main_asmp} and $X$, $Y$ be the corresponding diffusion processes. Suppose there is a {Feller-Markov process} $Z$ satisfying conditions \eqref{i1}, \eqref{i2} in Definition \ref{idef} along with the infinitesimal Bayes' condition \eqref{eq:condlaw}.

%\begin{enumerate}
%\item[(iii)']\label{i3'} For any $t>0$, given $Z_1(0)$, the random variables $Z_2(0)$ and $Z_1(t)$ are conditionally independent. 
%\item[(iv)']\label{i4'} For any $t>0$, given $Z_2(t)$, the random variables $Z_2(0)$ and $Z_1(t)$ are conditionally independent.
%\end{enumerate}

\begin{thm}\label{main2} Suppose that the kernel $L$ satisfies Assumption \ref{asmpthm1}. Then the action of the  generator of $Z$ on $C_c^{\infty}(\mathcal{X}\times\mcal{Y})$ \color{black} is given by \eqref{Zgen} with the boundary conditions as in Theorem \ref{main1}, and $\Lambda$ satisfies \eqref{PDE}.  
 Moreover, for every function $f\in \mcal{D}\left({\mathcal A}^X\right)$, 
the commutativity relation holds:
\eq\label{comm}  
L{\mathcal A}^X f = {\mathcal A}^Y Lf.
\en
\end{thm}% Moreover, assume
%\begin{enumerate}[(i)]
%\item for all $x\in\mcal{X}$,  $\Lambda(\cdot,x)$ belongs to the domain of ${\mathcal A}^Y$ in $C_0(\mcal{Y})$,
%\item for all $y\in\mcal{Y}$, $\Lambda(y,\cdot)$ belongs to the domain of $\left({\mathcal A}^X\right)^*$, and
%\item ${\mathcal A}^Y\Lambda$ is continuous and, for all $x\in\mcal{X}$, there exists a bounded open neighborhood $U(x)$ of $x$ in $\mcal{X}$ such that ${\mathcal A}^Y\Lambda$ is uniformly bounded on $\mcal{Y}\times U(x)$.
%\end{enumerate}

% Soumik added the following sentence

\comment{\begin{rmk}\label{rmk:semigp}
We should warn the reader that, for our diffusions, the extended Diaconis-Fill condition \eqref{eq:condlaw} may not hold for every fixed time $t$ without the limit. One can construct examples where this violates the semigroup property of the joint process. 
\end{rmk}}

In the analytic literature the commutativity relation \eqref{comm} is usually referred to as \textit{transmutation of the operators} ${\mathcal A}^X$ and ${\mathcal A}^Y$. The latter is a classical concept in the study of partial differential equations and goes back to {Euler}, {Poisson} and {Darboux} in the case that ${\mathcal A}^X$ is the Laplacian and ${\mathcal A}^Y$ is its radial part (or, in other words, the generator of a Bessel process). An excellent introduction to this area is the book \cite{Ca2} by {Carroll} which, in particular, stresses the role that special functions play in the theory of transmutations.    

\medskip

The rest of the paper is structured as follows. 
\begin{enumerate}[(i)]
\item We end the introduction with the following subsection that reviews the literature that has led to the development of the subject so far. 
\item In Section \ref{sec:proofs} we give the proofs of Theorems \ref{main1} and \ref{main2}. We also prove a generalization to diffusions reflecting on moving boundaries and establish an important connection to harmonic functions and Doob's h-transforms.  
\item In Section 3 we explore the Markov chain of diffusions induced by intertwinings. We also explore the deep connection of intertwining with duality which demonstrates how the direction of intertwining reverses with time-reversal. We also construct \textit{simultaneous intertwining} that allows us to couple multiple duals with the same diffusion. 
\item Section \ref{sec:exmp} is in two parts. The first collects most known examples and shows that they are all covered by our results. This includes recent examples such as the $2$d-Whittaker growth model (related to the Hamiltonian of the quantum Toda lattice). In the second part, we produce classes of new examples by solving the corresponding hyperbolic partial differential equations. 
\item In Section \ref{sec:reflection} we cover diffusions reflected on a moving boundary. A major example is the Warren construction of interlacing Dyson Brownian motions on the Gelfand-Tsetlin cone for which we give two new proofs.
\comment{ 
\item  Section \ref{sec:rates} explains how intertwinings can be used to give bounds on the rate of convergence to equilibrium for diffusion semigroups. }
\item Finally, an appendix has been added on the literature on common hyperbolic PDEs for the benefit of a reader with a probability background. 
\end{enumerate}

\subsection{A brief review of the literature.} The study of intertwinings started with the question of when a function of a Markov process is again a Markov process. General criteria were given by {Dynkin} (see \cite{Dy}), {Kemeny} and {Snell} (see \cite{KS2}), and {Rosenblatt} (see \cite{Ro}). In \cite{RP}, {Rogers} and {Pitman} derived a new criterion of this type and used it to reprove the celebrated $2M-B$ Theorem of {Pitman} (see \cite{Pi} for the original result and \cite{JY} by Jeulin and Yor for yet another proof). These examples have been reviewed in detail in Section \ref{sec:exmp}.

Pitman's result triggered an extensive study of functionals of Brownian motion (and, more generally, of L\'{e}vy processes) through intertwining relations. Notable examples include the articles by {Matsumoto} and {Yor} (see \cite{MY1}, \cite{MY2}) which extend Pitman's Theorem to exponential functionals of Brownian motion by exploiting the fact that the latter are intertwined with the Brownian motion itself (see also Baudoin and O'Connell \cite{BO} for an extension to higher dimensions); the paper \cite{CPY} by Carmona, Petit, and Yor presents a new class of intertwining relations between Bessel processes of different dimensions, which can be viewed as the process extension of the well-known Beta-Gamma algebra; the article \cite{Du} by Dub\'edat shows that a certain reflected Brownian motion in a two-dimensional wedge is intertwined with a $3$-dimensional Bessel process and uses this fact to derive formulas for some hitting probabilities of the former; and the paper \cite{Y} extends the results in \cite{MY1}, \cite{MY2} further to exponential functionals of L\'{e}vy processes. 

More recently, interwining relations were discovered in the study of random matrices and related particle systems. In \cite{DM}, the authors Donati-Martin, Doumerc, Matsumoto, and Yor give a matrix version of the findings in \cite{CPY}, namely an intertwining relation between Wishart processes of different parameters. The works by Warren \cite{W}, Warren and Windridge \cite{WW}, O'Connell \cite{Oc}, Borodin and Corwin \cite{BC} and Gorin and Shkolnikov \cite{GS} exploit the idea that one can concatenate multiple finite-dimensional Markov processes, each viewed as a particle system on the real line given by its components, to a \textit{multilevel process} provided that any two consecutive levels obey an intertwining relation. This program was initiated by {Warren} in \cite{W} who construced a multilevel process in which the particle systems on the different levels are given by Dyson Brownian motions of varying dimensions with parameter $\beta=2$ (corresponding to the evolution of eigenvalues of a Hermitian Brownian motion). Related dynamics were studied in \cite{WW} and an extension to arbitrary positive $\beta$ is given in \cite{GS}. Such processes arise as diffusive limits of continuous time Markov chains defined in terms of symmetric polynomials (Schur polynomials in the case of $\beta=2$ and, more generally, Jack polynomials, see \cite{GS0}, \cite{GS} and the references therein). The articles \cite{BC}, \cite{Oc} explore (among other things) the multilevel diffusion processes corresponding to a class of Macdonald polynomials. The article \cite{AOW} studies intertwining relations among $h$-transforms of Markov processes whose transition densities have a determinantal structure and constructs multilevel couplings realizing these intertwinings.

In many situations, intertwining relations arise as the result of deep algebraic structures. {Biane} (see \cite{Bi}) gives a group theoretic construction that produces intertwinings based on Gelfand pairs. In Diaz and Weinberger \cite{DW} the construction of intertwinings is based on the determinantal ({Karlin}-{McGregor}) form of the transition semigroups involved. The paper by Gallardo and Yor \cite{GY} exploits the intertwining of Dunkl processes with Brownian motion and the link operator there is an algebraic isomorphism on the space of polynomials which preserves the subspaces of homogeneous polynomials of any fixed degree. Another example is the deep connection of the Robinson-Schensted correspondence with the intertwining relation between a Dyson Brownian motion and a standard Brownian motion of the same dimension established by {O'Connell} (see \cite{Oc2}). An example of intertwining given by an underlying branching structure appears in Johnson and Pal \cite{JP}.

Originally, intertwining relations have been used to derive explicit formulas for the more complicated of two intertwined processes from the simpler of the two processes (see the references above). However, there are other interesting applications of intertwinings. {Diaconis} and {Fill} \cite{DF} show that intertwinings of two Markov chains can be used to understand the convergence to equilibrium of one of the chains by understanding the hitting times of the other chain. This method relies on the fact that the latter hitting times are strong stationary times of the former Markov chain and, thus, give sharp control on its convergence to equilibrium in the separation distance as explained by Aldous and Diaconis \cite{AD}. Fill  \cite{JF} extended these ideas to the case of continuous-time Markov jump processes.  Another application of intertwinings lies in the construction of new Markov processes, typically ones with non-standard state spaces (such as a number of copies of $\rr_+$ glued together at $0$ in the case of Walsh's spider), from existing ones (see Barlow and Evans \cite{BE}, Evans and Sowers \cite{ES} for a collection of such constructions).

Yet another related concept comes from filtering theory. In the article \cite{Kur98} (see also \cite{KurOc}), Kurtz considers the martingale problem version of determining when a function of a Markov process is again Markov. The author develops the concept of a filtered martingale problem where one considers the martingale problem satisfied by the projection of the law of a Markov process onto a smaller filtration. It can be related to our problem at hand in the following way. Suppose we start with the coupling given in Theorem \ref{main1}. Take the Markov process to be $Z=(Z_1, Z_2)$ with its own associated filtration. Take the projection map $(z_1, z_2)\mapsto z_1$. If the regularity conditions in \cite{Kur98} are met, then the claim that $Z_1$ is Markov should follow from the approach in \cite{Kur98}. However, there is no systematic way to guess such couplings from the filtering approach. Moreover, the additional diagonal independence stipulated by condition (iv) of Definition \ref{idef} (or, the extended Diaconis-Fill condition (v) in \eqref{eq:condlaw}) does not follow from this general abstract approach. In particular, there are no counterparts to Theorem \ref{main2} and the results in Section \ref{sec:properties} in the filtering framework. On the other hand, filtered martingale problems can be applied to general Markov processes that are not diffusions and possibly admit jumps.  

In \cite{MP}, Miclo and Patie introduce a strengthening of intertwining relationships called \textit{interweaving}. A semigroup $(Q_s)$  is said to have an interweaving relation with another semigroup $(P_s)$ if there exist stochastic kernels $L$ and $\tilde{L}$ and a nonnegative random variable $\tau$ such that $Q \intw{L} P$, $P \langle \tilde{L} \rangle Q$, and 
\[
L \tilde{L} = \int_0^{\infty}Q_s \mathbb{P}(\tau \in \mathrm{d}s).
\]
When $(Q_s)$ has an interweaving relation with $(P_s)$, strong information about $(Q_s)$ (such as, e.g., convergence to equilibrium, hypercontractivity, and cut-off phenomena) can be deduced from that about $(P_s)$.

Two other interesting articles have considered strong stationary duality and intertwining of one-dimensional diffusions. Fill and Lyzinski \cite{FLyz} and Miclo \cite{Miclo13} are both primarily motivated by the question of rate of convergence of one-dimensional diffusions to equilibrium. These works are similar to ours in the sense that they are also extensions of the Diaconis-Fill construction to continuous time. In one dimension, these authors perform a much more detailed analysis of the dual using the scale function and the speed measure. Miclo, for example, extends the Morris-Peres idea of evolving sets to diffusions and constructs set-valued processes that intertwine the original semigroup. These ideas are extended in \cite{AKM} which constructs set-valued duals for Brownian motion on manifolds. This is different from our goal of characterizing the multidimensional intertwining coupling in terms of solutions of hyperbolic equations in its own right, and not just as a tool for the study of convergence rates.      

There is another notion of duality, originally due to Holley and Stroock \cite{HS}, which is prevalent in areas of probability such as interacting particle systems and population biology models. We refer to the book by Liggett \cite[Definition 2.3.1]{L} for numerous applications. This concept is sometimes called $h$-duality, a particular case of which is Siegmund duality \cite{Sieg}. Two Markov semigroups $(Q_t)$ and $(P_t)$ are dual with respect to a function $h:\mY \times \mX \rightarrow [0,\infty)$ if for every $(y,x)\in \mY \times \mX$ we have
\[
Q_t\left( h_x \right)(y)= P_t \left( h^y \right)(x), 
\] 
where $h_x(y)= h^y(x)=h(y,x)$. When $\mX=\mY=\rr$ and $h(y,x)=\sgn(y-x)$ this is called Siegmund duality. The notions of $h$-duality and intertwining are to some extent equivalent, in that the function $h$, suitably normalized, acts as an intertwining kernel between $Q$ and the time-reversal of $P$ under a Doob's $h$-transform. This has been shown in \cite[Proposition 5.1]{CPY} and in various results in \cite[Section 5.2]{DF}. Please consult these references for an exact statement. For more on the role of $h$-transforms in the context of intertwinings please see Section \ref{sec:proofs}.  

\subsection{Acknowledgement.} It is our pleasure to thank Alexei Borodin for pointing out the lack of a theory of intertwined diffusions to us and for many enlightening discussions. We also thank Alexei Borodin and Vadim Gorin for pointing out the asymptotic nature of the condition (v) preceding the statement of Theorem \ref{main2} above and S.~R.~S.~Varadhan for a very helpful discussion. We are grateful for helpful comments from Ioannis Karatzas and Sourav Chatterjee that led to an improvement of the presentation of the material from an earlier draft. Finally, we are indebted to the anonymous associate editor and referee for detecting a mistake in the original version of the paper. 

%%%%%%%%%%%%%%%%%%%%%%%%%%%%%%
\section{Proofs of the main results, extensions, and generalizations} \label{sec:proofs}
%%%%%%%%%%%%%%%%%%%%%%%%%%%%%%

\begin{notn} The following notations will be used throughout the text. For a subset $\mcal{X}$ of a Euclidean space, as before, $C_0\left(\mcal{X}\right)$ denotes the space of continuous functions on $\mcal{X}$ vanishing at infinity. In addition, we write $C^\infty_c\left(\mcal{X}\right)$ for the space of infinitely differentiable functions on $\mcal{X}$ with compact support.
\end{notn}

We start with the proof of Theorem \ref{main1}. 

\medskip

\noindent\textbf{Proof of Theorem \ref{main1}.} The proof is broken down into several steps. Throughout the proof we will assume that the underlying filtered probability space is given by the canonical space of continuous paths, $C\left( [0,\infty),\; {\mathcal X}\times{\mathcal Y} \right)$, from $[0, \infty)$ to ${\mathcal X}\times{\mathcal Y}$, along with the standard Borel $\sigma$-algebra and a probability measure $\pp$, the law of the process $Z$. This space is then equipped with the right-continuous filtration $\left\{  \mcal{F}_t,\; t\ge 0 \right\}$ generated by the coordinates and augmented with the null sets of $\mathbb{P}$. Let $\left(\pp_{z},\; z\in{\mathcal X}\times{\mathcal Y} \right)$ be the set of solutions of the martingale {(submartingale resp.)} problem for $\mcal{A}^Z$ starting at $z\in{\mathcal X}\times{\mathcal Y}$. The notation $\ev$ will refer to a generic expectation.

We will also need two sub-filtrations. Let $\left\{\mcal{F}^X_t, \; t\ge 0\right\}$ and $\left\{\mcal{F}^Y_t,\; t\ge 0 \right\}$ denote the right-continuous complete sub-filtrations of $\left\{\mcal{F}_t,\;t\ge 0\right\}$ generated the by the first $m$ and the next $n$ coordinate processes in $C\left( [0,\infty), {\mathcal X}\times{\mathcal Y} \right)$, respectively. 

\medskip
\comment{
\nin\tbf{Step 1.} We first claim that for all $f \in C_c(\mcal{X})$, $Lf \in \mcal{D}(\mcal{A}^Y)$ and
\begin{equation}\label{gen2}
\mcal{A}^Y Lf = \int \mcal{A}^Y \Lambda(y,x)f(x)\mathrm{d}x.
\end{equation}
The continuity of $\mcal{A}^Y\Lambda$, the compactness of the support of $f$, and the boundedness of $\mcal{A}^Y \Lambda$ on $\text{supp}(f)\times\mcal{Y}$ ensure the right-hand side is a $C_0(\mcal{Y})$ function. Without loss of generality, we may assume that $f \geq 0$. Then, Fubini's Theorem for nonnegative integrands ensures that 
\begin{equation}\label{gen1}
\frac{Q_tLf(y) - Lf(y)}{t} = \int \frac{[Q_t \Lambda(\cdot,x)](y) - \Lambda(y,x)}{t}f(x)\mathrm{d}x = \int \frac{1}{t}\int_0^t Q_s \mcal{A}^Y\Lambda(y,x)f(x)\mathrm{d}s\mathrm{d}x.
\end{equation}
Now, note that the continuity and boundedness of $\mcal{A}^Y\Lambda$ on $\text{supp}(f) \times \mcal{Y}$ imply that
\[
\Big|\frac{1}{t}\int_0^t Q_s\mcal{A}^Y\Lambda(y,x)\mathrm{d}s - \mcal{A}^Y\Lambda(y,x)\Big|=o(1)
\]\
where the $o(1)$ term is uniform over $x \in \text{supp}(f)$. Therefore, by the dominated convergence theorem, we have that the left-hand side of \ref{gen1} converges pointwise in $y$ to the right-handside of \ref{gen2}. \cite[Theorem 1.33]{LM} implies that $Lf \in \mcal{D}(\mcal{A}^Y)$ and that it is equal to the right-hand side of \ref{gen2}.}

\nin\tbf{Step 1.} We first prove that the process $Z_1$ is a {Feller-}Markov process with respect to its own filtration. It is easy to see that under any $\mathbb{P}_{(x,y)}$, $Z_1$ is a weak solution to the SDE with generator $\mcal{A}^X$ started from $x$. Since the SDE is well-posed,  we must have $Z_1\stackrel{d}{=}X$. In particular, $Z_1$ is a Feller-Markov process with respect to $\left\{\mcal{F}_t^X,\;t\ge0 \right\}$.

\comment{By applying It\^o's formula to functions of $Z_1$ it is easy to see that $Z_1$ solves the martingale (submartingale resp.) problem for ${\mathcal A}^X$. Since the initial distributions of $Z_1$ and $X$ match, we must have $Z_1\stackrel{d}{=}X$. In particular, $Z_1$ is a Feller-Markov process with respect to $\left\{\mcal{F}_t^X,\;t\ge0 \right\}$.}

\comment{
\medskip

}

\medskip
\noindent\textbf{Step 2.} Next, we show condition \eqref{i3} in Definition \ref{idef}. Fix any $0\le s < t<\infty$. We need to show that $Z_1(t)$, conditioned on $Z_1(s)$, is independent of the $\sigma$-algebra $\mcal{F}^Z_s$. Since $Z$ is assumed to be Markovian, it is enough to show that, given $Z_1(s)$, $Z_1(t)$ is independent of $Z_2(s)$. To this end, we observe that due to the time-homogeneity of the semigroup of $Z$ it is sufficient to consider $s=0$. Therefore, condition \eqref{i3} in Definition \ref{idef} holds if the following equality is true for all bounded measurable functions $f$ on $\mcal{X}$:
\eq\label{i3claim}
\ev\big[f(Z_1(t))\,\big|\,Z_1(0)=x,Z_2(0)=y\big]
=\ev\big[f(Z_1(t))\,\big|\,Z_1(0)=x\big],\quad (t,x,y)\in[0,\infty)\times{\mathcal X}\times{\mathcal Y}.
\en
To show this, it suffices to show that the law of $Z_1$ is the same under $\mathbb{P}_{(x,y)}$ and $\mathbb{P}_{(x,y')}$ for all $y,y' \in \mcal{Y}$. However, the law of $Z_1$ under both $\mathbb{P}_{(x,y)}$ and $\mathbb{P}_{(x,y')}$ is a weak solution to the SDE with generator $\mcal{A}^X$ started from $x$. Since the SDE was assumed to be well-posed, we must have that the law of $Z_1$ is identical under both probability measures.

\comment{However,  It\^o's  formula implies that the law of $Z_1$ under both $\mathbb{P}_{(x,y)}$ and $\mathbb{P}_{(x,y')}$ are solutions to the $C_c^{\infty}(\mcal{X})$ martingale (submartingale resp.) problem for $\mcal{A}^X$ starting at $x$. Since this problem was assumed to be well-posed, we must have that law of $Z_1$ is identical under both probability measures.}

\color{black}
\comment{In this setting {and in view of the Monotone Class Theorem, the Monotone Convergence Theorem, \color{blue} and the fact that $X$ spends Lebesgue almost no time on the boundary}, \color{black} condition \eqref{i3} in Definition \ref{idef} can be written as
\eq\label{i3claim}
\ev\big[f(Z_1(t))\,\big|\,Z_1(0)=x,Z_2(0)=y\big]
=\ev\big[f(Z_1(t))\,\big|\,Z_1(0)=x\big],\quad (t,x,y)\in[0,\infty)\times{\mathcal X}\times{\mathcal Y}
\en
for all functions $f\in C_c^\infty(\overset{\circ}{{\mathcal X}})$. The left-hand side of \eqref{i3claim} satisfies the Kolmogorov forward equation for ${\mathcal A}^Z$ in the variables $(x,y)$ (see, e.g., Theorem 17.6 in \cite{Ka}) with the initial condition $(x,y)\mapsto f(x)$. Thus, \eqref{i3claim} is a consequence of the uniqueness theorem for the latter (cf. Proposition II.6.2 in \cite{EN}) and an application of the following statement to the right-hand side of \eqref{i3claim}: any $C_0(\mathcal{X})$ function $u$ in the domain of ${\mathcal A}^X$, viewed as a function on $\mathcal{X}\times\mathcal{Y}$, satisfies ${\mathcal A}^Z u={\mathcal A}^X u$. To see this it suffices to approximate $u$ uniformly by a sequence of $C_c^\infty(\mathcal{X})$ functions $u_l$, $l\in\nn$ such that ${\mathcal A}^X u_l \to {\mathcal A}^X u$ in $C_0(\mcal{X})$ and to pass to the limit $l\to\infty$ in ${\mathcal A}^Z u_l={\mathcal A}^X u_l$ using the closedness of ${\mathcal A}^Z$.}
\medskip

\comment{
}
%\comment{
\nin\tbf{Step 3.}  We now claim the following. 
\medskip

\nin\tbf{Claim:} Take any $h\in \mcal{D}(\mcal{A}^Z)$. Then the function
\eq\label{whatisu}
{u(t):\;{\mathcal Y}\to\rr,\quad y\mapsto\ev \left [h(Z_1(t),Z_2(t))\mid Z_2(0)=y\right]} 
\en
is in the domain of ${\mathcal A}^Y$ in $C_0\left(\mcal{Y}\right)$ for every $t\ge0$, the function $t\mapsto u(t)$ is continuously differentiable with respect to the uniform norm on $C_0\left(\mcal{Y}\right)$, and 
\eq\label{MP}
\frac{\mathrm{d}}{\mathrm{d}t}\,u(t) = {\mathcal A}^Y u(t),\quad t\ge0.
\en

\medskip

To prove the claim we define, {for every fixed $t\ge0$, the function}
\eq\label{eq:vtxy}
{v(t):\;\mcal{X}\times\mcal{Y}\to\rr,\quad (x,y)\mapsto\ev\left [h(Z_1(t),Z_2(t)) \mid  Z_1(0)=x,Z_2(0)=y\right]}.
\en
Thanks to the assumption on the conditional distribution of $Z_1(0)$ given $Z_2(0)$ the expectation in \eqref{whatisu} can be rewritten as
\eq\label{whatisv}
\int_{\mathcal X} \Lambda(y,x)\,v(t)(x,y)\,\mathrm{d}x\,. 
\en
Moreover, by \cite[Theorem 17.6]{Ka}, $v(t)$ belongs to the domain of $\mcal{A}^Z$ {in $C_0\left(\mcal{X}\times\mcal{Y}\right)$} for every $t\ge0$\color{black} \comment{(here the superscript $'$ denotes the transpose)}, the function $t\mapsto v(t)$ is continuously differentiable with respect to the uniform norm on $C_0\left(\mcal{X}\times\mcal{Y}\right)$, and one has the Kolmogorov forward equation
\begin{equation}
\frac{\mathrm{d}}{\mathrm{d}t}\,v(t)=\comment{\big({\mathcal A}^X + {\mathcal A}^Y + (\nabla_y\,V)'\,\rho\,\nabla_y\big)\,}\mcal{A}^Z \, v(t),\quad t\ge0. 
\end{equation}

Since the derivative $\frac{\mathrm{d}}{\mathrm{d}t}\,v(t)$ was defined with respect to the uniform norm on $C_0\left(\mcal{X}\times\mcal{Y}\right)$, by the Feller-Markov property we have  
\eq\label{eq:somemanip}
\frac{\mathrm{d}}{\mathrm{d}t}\,u(t)=\int_{\mathcal X} \Lambda\,\frac{\mathrm{d}}{\mathrm{d}t}\,v(t)\,\mathrm{d}x
=\int_{\mathcal X} \Lambda\,\comment{\big({\mathcal A}^X + {\mathcal A}^Y + (\nabla_y\,V)'\,\rho\,\nabla_y\big)\,}\mcal{A}^Z \,v(t)
\,\mathrm{d}x.
\en
{Moreover, we note that the operator \comment{${\mathcal A}^X + {\mathcal A}^Y + (\nabla_y\,V)'\,\rho\,\nabla_y$} $\mcal{A}^Z$ is closed as an operator on $C_0\left(\mcal{X}\times\mcal{Y}\right)$ by  \cite[Lemma 17.8]{Ka}. \comment{In addition, due to the continuity of its coefficients, the subspace $C_c^\infty\left(\mcal{X}\times\mcal{Y}\right)$ is a dense subset of its domain. In other words, $C_c^\infty\left(\mcal{X}\times\mcal{Y}\right)$ is a core in the sense of Chapter 17 in \cite{Ka}.} By assumption, $C_c^\infty\left(\mcal{X}\times\mcal{Y}\right)\cap \mcal{D}(\mcal{A}^Z)$ is a core for the domain of $\mcal{A}^Z$, so there exists a sequence $v_l(t)$, $l\in\nn$ in $C_c^\infty\left(\mcal{X}\times\mcal{Y}\right)$ which converges to $v(t)$ uniformly on $\mcal{X}\times\mcal{Y}$ and such that
\[
\big({\mathcal A}^X + {\mathcal A}^Y + (\nabla_y\,V)'\,\rho\,\nabla_y\big)\,v_l(t) = \mcal{A}^Z v_l(t) \longrightarrow
\comment{\big({\mathcal A}^X + {\mathcal A}^Y + (\nabla_y\,V)'\,\rho\,\nabla_y\big)}\mcal{A}^Z\,v(t)\quad\text{as}\quad l\to\infty
\]   
uniformly on $\mcal{X}\times\mcal{Y}$ as well. Therefore the rightmost expression in \eqref{eq:somemanip} can be written as  the uniform limit
\eq\label{eq:somemanip'}
\begin{split}
&\lim_{l\to\infty} \int_{\mathcal X} \Lambda\,\big({\mathcal A}^X + {\mathcal A}^Y + (\nabla_y\,V)'\,\rho\,\nabla_y\big)\,v_l(t)\,\mathrm{d}x \\
&=\lim_{l\to\infty} \int_{\mathcal X} \Lambda\,{\mathcal A}^X\,v_l(t)
+\big(\Lambda\,{\mathcal A}^Y + \Lambda\,(\nabla_y\,V)'\,\rho\,\nabla_y+({\mathcal A}^Y\Lambda)\big)\,v_l(t)
-({\mathcal A}^Y\Lambda)\,v_l(t)\,\mathrm{d}x \\
&=\lim_{l\to\infty} \int_{\mathcal X} 
\big(\Lambda\,{\mathcal A}^Y + (\nabla_y\,\Lambda)'\,\rho\,\nabla_y+({\mathcal A}^Y \Lambda)\big)\,v_l(t)
+\Lambda\,{\mathcal A}^X\,v_l(t)-\big(({\mathcal A}^X)^*\,\Lambda\big)\,v_l(t)\,\mathrm{d}x \\
&=\lim_{l\to\infty} \int_{\mathcal X} 
\big(\Lambda\,{\mathcal A}^Y + (\nabla_y\,\Lambda)'\,\rho\,\nabla_y+({\mathcal A}^Y\Lambda)\big)\,v_l(t)
\,\mathrm{d}x,
\end{split}
\en
with the second and third identities being consequences of $V=\log \Lambda$, the equation \eqref{PDE}, and the defining property of the adjoint operator $({\mathcal A}^X)^*$ (see, e.g., \cite[Definition B.8]{EN}).}

\medskip
We now aim to simplify the integrand in the final term to $\mcal{A}^Y(\Lambda v_l(t))$. Fix $x \in \mcal{X}$. We will momentarily suppress the dependence of all functions on $x$. Then, since $\Lambda, v_{l}(t) \in \mcal{D}(\mcal{A}^Y)$, we have that $(\Lambda \pm v_l(t))(Y(s))$, $s \geq 0$ are semimartingales. Moreover, by Lemma \ref{lowreggito} in the appendix, we can identify the quadratic variations of these semimartingales as 
\[
\big\langle (\Lambda \pm v_l(t))(Y(\cdot)) \big\rangle_s = \int_0^s \nabla_y (\Lambda \pm v_l(t))(Y(\tau))' \rho(Y(\tau)) \nabla_y (\Lambda \pm v_l(t))(Y(\tau))\,\mathrm{d}\tau .
\]
Due to the polarization identity (\cite[Theorem IV.1.9]{RY}), we can identify the covariation between $\Lambda(Y(\cdot))$ and $v_l(t)(Y(\cdot))$ as 
\[
\mathrm{d}\big\langle \Lambda(Y(\cdot)),v_l(t)(Y(\cdot)) \big\rangle_s = \nabla_y \Lambda (Y(s))' \rho(Y(s)) \nabla_y v_l(t)(Y(s))\hspace{1.6pt}\mathrm{d}s.
\]
The product rule for semimartingales implies that 
\[
(\Lambda v_l(t))(Y(s)) - (\Lambda v_l(t))(Y(0)) - \int_0^s \big( \Lambda\mcal{A}^Yv_l(t) + v_l(t)\mcal{A}^Y\Lambda+ (\nabla_y \Lambda)' \rho \nabla_y v_l(t)\big)(Y(\tau)) \,\mathrm{d}\tau
\]
is a bounded local martingale on every compact time interval, and therefore a true martingale. (Recall the compact support of $v_l(t)$.) Therefore, by \cite[Proposition VII.1.7]{RY}, we have that $\Lambda v_l(t) \in \mcal{D}(\mcal{A}^Y)$ with 
\eq\label{prod_rule}
{\mathcal A}^Y \big(\Lambda\,v_l(t)\big)
=\Lambda{\mathcal A}^Y v_l(t) + (\nabla_y\,\Lambda)'\,\rho\nabla_y\,v_l(t) + ({\mathcal A}^Y \Lambda)\,v_l(t),
\en
thus, simplifying the end result of \eqref{eq:somemanip'} to $\lim_{l\to\infty} \int_{\mathcal X} {\mathcal A}^Y \big(\Lambda\,v_l(t)\big)\,\mathrm{d}x$.

\medskip
\color{black}

Finally, thanks to the compactness of the support of $v_l(t)$ and the regularity assumptions on $\Lambda$ we can approximate the integrals $\int_{\mathcal X} {\mathcal A}^Y \big(\Lambda\,v_l(t)\big)\,\mathrm{d}x$, $\int_{\mathcal X} \Lambda\,v_l(t)\,\mathrm{d}x$ uniformly by sums 
\[
\sum_{r=1}^R \mathrm{vol}(\mcal{X}_r)\,{\mathcal A}^Y \big(\Lambda(\cdot,x_r)\,v_l(t)(x_r,\cdot)\big),\quad
\sum_{r=1}^R \mathrm{vol}(\mcal{X}_r)\,\Lambda(\cdot,x_r)\,v_l(t)(x_r,\cdot),
\]
where $\{\mcal{X}_r:\,r=1,2,\ldots,R\}$ are partitions of $\cup_{y \in \mcal{Y}}\text{supp}(v_l(t)(\cdot,y))$ into disjoint bounded measurable sets, $\mathrm{vol}$ stands for the Euclidean volume, and $x_r\in\mcal{X}_r$, $r=1,2,\ldots,R$. Passing to the limit $R\to\infty$ and appealing to the closedness of ${\mathcal A}^Y$ we obtain
\[
\lim_{l\to\infty} \int_{\mathcal X} {\mathcal A}^Y \big(\Lambda\,v_l(t)\big)\,\mathrm{d}x 
= \lim_{l\to\infty} {\mathcal A}^Y \bigg(\int_{\mathcal X}  \Lambda\,v_l(t)\,\mathrm{d}x\bigg).
\]
Recalling that we started from a limit $l\to\infty$ that was uniform in $y$ and using the closedness of ${\mathcal A}^Y$ once again we identify the latter limit as ${\mathcal A}^Y u(t)$ which gives the claim.
%}

\medskip

\noindent\textbf{Step 4}. We now claim that for all bounded and measurable $h$ on $\mcal{X} \times \mcal{Y}$, we have the following identity:
\eq\label{duallaw}
\mathbb{E}\big[h(Z_1(t),Z_2(t)) \,|\, Z_2(0)=y\big] = \mathbb{E}\bigg[\int_{\mcal{X}} \Lambda(Y(t),x)\,h(x,Y(t))\,\mathrm{d}x \,\bigg| \,Y(0)=y\bigg].
\en

By applying the claim in Step 3 to $u(0)$, we find that the function $y \hspace{-1.4pt}\rightarrow \hspace{-1.4pt}\int_{\mcal{X}}\Lambda(y,x)\,h(x,y)\,\mathrm{d}x$ is in $ \mcal{D}(\mcal{A}^Y)$ for all $h \in \mcal{D}(\mcal{A}^Z)$. By Proposition II.6.2 in \cite{EN}, the solution to equation (\ref{MP}) is unique, and we therefore have the identity for all $h \in \mcal{D}(\mcal{A}^Z)$. By Theorem 17.4 in \cite{Ka}, $\mcal{D}(\mcal{A}^Z)$ is dense in $C_0(\mcal{X} \times \mcal{Y})$ and so the above identity extends to the latter class of functions. Since a finite measure is uniquely determined by its action on $C_0(\mcal{X}\times \mcal{Y})$ functions, this concludes Step 4. 

\medskip

\noindent\textbf{Step 5.} We now prove condition \eqref{i2} in Definition \ref{idef}.
For a bounded, measurable function $h$ on $\mcal{X}$, the right-hand side of (\ref{duallaw}) is $Q_t L h$. For this same $h$, in view of our assumption on the initial distribution of $Z$, the left-hand side can be expanded as 
\[
\int_{\mcal{X}} \Lambda(y,x)\, \mathbb{E}\big[h(Z_1(t)) \,|\, Z_2(0)=y,Z_1(0)=x\big]\,\mathrm{d}x = \int_{\mcal{X}}\, \Lambda(y,x) \,\mathbb{E}\big[h(Z_1(t))\, |\, Z_1(0)=x\big]\,\mathrm{d}x,
\]
where the equality follows from Step 2. Due to Step 1, the term on the right-hand side can be identified as $L P_t h$. This proves condition (\ref{i2}).

\comment{To this end, \color{blue} we need to show that for any Borel set $B$ of $\mathbb{R}^m$ and $f$ the indicator function of $B \cap \mcal{X}$ we have the identity
\eq\label{QLf=LPf}
Q_t\,L\,f=L\,P_t\,f.
\en
{\color{blue} The $\pi$-$\lambda$ theorem implies that it suffices to show the equality for bounded open sets $B$. Furthermore, due to the fact that $P_t$ and $L$ assign zero mass to $\partial \mcal{X}$, it suffices to consider sets $B$ which are bounded open sets compactly contained in the interior of $\mcal{X}$. By the smooth Urysohn's Lemma (\cite[Theorem 8.18]{Fol}), indicator functions of these sets $B$ are pointwise limits of uniformly bounded $C_c^{\infty}(B)$ functions (which automatically belong to $\mcal{D}(\mcal{A}^X) )$. Therefore, \eqref{QLf=LPf} for all indicator functions is a consequence of \eqref{QLf=LPf} for the $f \in C_c^{\infty}(\overset{\circ}{\mcal{X}})$ and the Dominated Convergence Theorem. Therefore, fixing an $f \in C_c^{\infty}(\overset{\circ}{\mcal{X}})$, \color{black} we need to prove
\eq\label{comm_id}
\ev\left[\int_{\mathcal X} \Lambda(Z_2(t),x)\,f(x)\,\mathrm{d}x\; \middle | \; Z_2(0)=y\right]=\int_{\mathcal X} \Lambda(y,x)\,\ev\left[f(Z_1(t))\mid Z_1(0)=x\right]\mathrm{d}x
\en
for all $t\ge0$. 

\medskip

{To this end, we define 
\[
\begin{split}
& u(t):\;\mcal{Y}\to\rr,\quad y\mapsto\ev\left[\int_{\mathcal X} \Lambda(Z_2(t),x)\,f(x)\,\mathrm{d}x\; \middle |  \; Z_2(0)=y\right], \\ 
& v(t):\;\mcal{X}\to\rr,\quad x\mapsto\ev\left[f(Z_1(t))|Z_1(0)=x\right] 
\end{split}
\]
and consider the computation 
\[
\int_{\mcal{X}} \Lambda\,({\mathcal A}^X f)\,\mathrm{d}x 
= \int_{\mcal X} \big(({\mathcal A}^X)^* \Lambda\big)\,f\,\mathrm{d}x
= \int_{\mcal X} ({\mathcal A}^Y \Lambda)\,f\,\mathrm{d}x
\]
where the first identity follows directly from the definition of the adjoint operator $({\mathcal A}^X)^*$ and the second identity from \eqref{PDE}. Now, a locally uniform approximation of the latter integral by sums together with the locality (cf. \cite[Theorem 17.24]{Ka}) and the closedness of ${\mathcal A}^Y$ imply as in Step 2 that the function $y\mapsto \int_{\mcal{X}} \Lambda(y,x)\,f(x)\,\mathrm{d}x$ belongs to the domain of ${\mathcal A}^Y$. Consequently, the result of Step 2 shows} that $u(t)$, $t\ge0$ is a solution of the Cauchy problem for the Kolmogorov forward equation:
\eq\label{CP1}
\frac{\mathrm{d}}{\mathrm{d}t}\,u(t) = {\mathcal A}^Y u(t),
\quad u(0,\cdot)=\int_{\mathcal X} \Lambda(\cdot,x)\,f(x)\,\mathrm{d}x,
\en
{with the meaning of the equation $\frac{\mathrm{d}}{\mathrm{d}t}\,u(t) = {\mathcal A}^Y u(t)$ as specified in the beginning of Step 2.} By Proposition II.6.2 in \cite{EN} the solution of the problem \eqref{CP1} is unique and, thus, to prove \eqref{comm_id} it suffices to check that the right-hand side of \eqref{comm_id}, given by $\int_{\mathcal X} \Lambda(\cdot,x)\,v(t)(x)\,\mathrm{d}x$, also solves \eqref{CP1}. 

\medskip

{By Step 1 and Theorem 17.6 in \cite{Ka} the function $t\mapsto v(t)$ is differentiable with respect to the uniform norm on $C_0(\mcal{X})$ with $\frac{\mathrm{d}}{\mathrm{d}t}\,v(t)={\mathcal A}^X v(t)$ for all $t\ge0$, so that for every $y\in\mcal{Y}$ and $t\ge0$, 
\[
\begin{split}
& \frac{\mathrm{d}}{\mathrm{d}t} \int_{\mathcal X} \Lambda(y,x)\,v(t)(x)\,\mathrm{d}x 
= \int_{\mathcal X} \Lambda(y,x)\,\frac{\mathrm{d}}{\mathrm{d}t}\,v(t)(x)\,\mathrm{d}x 
= \int_{\mathcal X} \Lambda(y,x)\,\left({\mathcal A}^X\,v(t)\right)(x)\,\mathrm{d}x \\ 
& = \int_{\mathcal X} \big(({\mathcal A}^X)^*\Lambda\big)(y,x)\,v(t)(x)\,\mathrm{d}x 
= \int_{\mathcal X} ({\mathcal A}^Y \Lambda)(y,x)\,v(t)(x)\,\mathrm{d}x 
= {\mathcal A}^Y\;\int_{\mathcal X} \Lambda(y,x)\,v(t)(x)\,\mathrm{d}x.
\end{split}
\]
Here the third identity follows from the definition of the adjoint operator $({\mathcal A}^X)^*$, the fourth identity is a result of \eqref{PDE}, and the fifth identity can be obtained by approximating 
$v(t)$ uniformly by a sequence of $C^\infty_c({\mathcal X})$ functions $v_l(t)$, $l\in\nn$ such that ${\mathcal A}^X v_l(t)\to{\mathcal A}^X v(t)$ uniformly as well, interchanging ${\mathcal A}^Y$ with the integral over ${\mathcal X}$ as before, and passing to the limit $l\to\infty$ by means of \eqref{PDE} and the closedness of ${\mathcal A}^Y$, respectively. The proof of \eqref{QLf=LPf} is complete. }

}}

\medskip

\color{black}
\nin\tbf{Step 6.} We now prove condition \eqref{i4} of Definition \ref{idef}. The main claim is an iteration of the previous step. 

\medskip

\nin\tbf{Claim:} Fix $k \in \nn$, and let $0=t_0 < t_1 < \ldots < t_k=t$ be distinct time points. Let $\mcal{G}$ denote the sub-$\sigma$-algebra of $\mcal{F}^Y_t$ generated by $\big(Z_2(t_i),\; i=0,1,\ldots, k\big)$. Then, for all bounded measurable functions $f$ on $\mcal{X}$, we have 
\eq\label{eq:mkvlink}
\ev[f(Z_1(t))\,\big|\,\mcal{G}] = (Lf)(Z_2(t)). 
\en

\smallskip

The proof of the claim proceeds by induction {over $k$}. First, consider the case of $k=1$ {which amounts to showing
\eq\label{i5claim}
\ev\big[f(Z_1(t))\,g(Z_2(t))\,\big|\,Z_2(0)=y\big]
=\ev\big[(Lf)(Z_2(t))\,g(Z_2(t))\,\big|\,Z_2(0)=y\big]
\en
for all $y\in\mcal{Y}$ and bounded measurable functions $f$ on $\mcal{X}$ and $g$ on $\mcal{Y}$. Note that by applying (\ref{duallaw}) to $g$, we get the identity
\[
\mathbb{E}[g(Z_2(t)) \,|\, Z_2(0)=y] = \mathbb{E}[g(Y(t)) \,| \,Y(0)=y].
\]
Hence, the $k=1$ case follows directly from (\ref{duallaw}).

\medskip

Now, suppose the claim holds true for some $k\in \nn$. Then, the conditional expectation operator of $Z_1(t_k)$ given $\left( Z_2(0),\ldots,Z_2(t_{k})\right)$ is again $L$. To show that the claim holds true for $(k+1)$, one can repeat the argument for $k=1$ for the Feller-Markov process $Z(t_k+t)$, $t\ge 0$ after conditioning on $\left(Z_2(0),\ldots,Z_2(t_{k}) \right)$. This completes the proof of the claim. 

\medskip

We have shown so far that, for any bounded measurable function $f$ on $\mathcal{X}$, any $k\in\nn$, and any bounded measurable function $g$ on $\mcal{Y}^{k+1}$, we have
\[
\ev\big[f(Z_1(t_k))\,g(Z_2(t_0),\ldots,Z_2(t_k))\big]
=\ev\big[{(Lf)(Z_2(t_k))}\,g(Z_2(t_0),\ldots,Z_2(t_k))\big].
\] 
Since the $\sigma$-algebra $\mcal{F}^Y_t$ is generated by the coordinate projections, an application of the Monotone Class Theorem yields condition \eqref{i4}. 

\medskip

\nin\tbf{Step 7.} We now argue that $Z_2 \stackrel{d}{=} Y$. Given a measurable space $(\Omega,\mcal{F})$, denote by $B(\Omega)$ the set of bounded measurable functions on $\Omega$. Denote the Markov semigroup of $Z$ by $(R_t)$ and define the transition kernel $\Bar{\Lambda}$ from $ \mcal{Y}$ to $ \mcal{X} \times \mcal{Y}$ by $\Bar{\Lambda}(y',\mathrm{d}(y,x)) = \delta_{y'}(\mathrm{d}y)\Lambda(y,x)\mathrm{d}x$ where $\delta_{y'}(\mathrm{d}y)$ is a point mass at $y'$. Let $\Bar{L}$ be the integral operator of $\bar{\Lambda}$. Finally, define the function $\phi(x,y) = y$ and the operator $\Phi : B(\mcal{Y}) \rightarrow B(\mcal{X} \times \mcal{Y})$ by $\Phi f = f \circ \phi$. In view of our assumption on the initial distribution of $Z$, we can apply (\ref{duallaw}) to a function $f \in B(\mcal{Y})$ and arrive at the equality of kernels $\Bar{L} R_t \Phi = Q_t$. Applying (\ref{duallaw}) to a function $h \in B(\mcal{X}\times\mcal{Y})$ yields the equality $Q_t \Bar{L} = \bar{L} R_t$. One can also easily see that $\bar{L}\Phi$ is the identity operator on $B(\mcal{Y})$. Therefore, the assumptions of Theorem 2 in \cite{RP} are satisfied, and we get (under our assumptions on the initial distribution of $Z$) that $\phi(Z) = Z_2$ is a Markov process with transition semigroup $(Q_t)$.

\comment{{In view of the Monotone Class and the Monotone Convergence Theorems}, it is enough to show that, for any $k\in\nn$ and any choice of $0=t_0 < t_1 < \ldots < t_k=t$, {$y_0,\ldots,y_{k-1}\in\mcal{Y}$}, we have the correct transition probability:
\eq\label{eq:transz2}
\ev\big[f(Z_2(t))\,\big|\,Z_2(t_0)=y_0,\ldots,Z_2(t_{k-1})=y_{k-1}\big] = (Q_{t-t_{k-1}} f)(y_{k-1}) 
\en  
for all $f\in C_c^\infty(\mcal{Y})$. To this end, recall from Step 4 that the conditional distribution of $Z_1(t_{k-1})$ given $Z_2(t_0)=y_0,\ldots,Z_2(t_{k-1})=y_{k-1}$ has density $\Lambda(y_{k-1},\cdot)$, so that the conditional expectation on the left-hand side of \eqref{eq:transz2} can be written as
\[
\int_{\mcal{X}} \Lambda(y_{k-1},x)\,\ev\big[f(Z_2(t))\,\big|\,Z_2(t_0)=y_0,\ldots,Z_2(t_{k-1})=y_{k-1},\,Z_1(t_{k-1})=x\big]\,\mathrm{d}x.
\]
By the Feller-Markov property of $Z$ and Theorem 17.6 in \cite{Ka} the latter conditional expectation is a function of $x$ and $y_{k-1}$ only and satisfies the Kolmogorov forward equation for ${\mathcal A}^Z$ in those variables. Thus, the left-hand side of \eqref{eq:transz2} is a function of $y_{k-1}$ only and, by an identical argument as in Step 2, solves the Kolmogorov forward equation for ${\mathcal A}^Y$ in the variable $y_{k-1}$. Clearly, the right-hand side of \eqref{eq:transz2} solves the same equation with the same initial condition at $t=t_{k-1}$, and we conclude using the uniqueness theorem for the latter (cf. Proposition II.6.2 in \cite{EN}).}  

\medskip

\noindent\textbf{Step 8.} We now turn to the proof of (\ref{eq:condlaw}). Denote the transition kernel of the joint process $Z$ by $(R_t)$. For any $h \in \mcal{D}(\mcal{A}^Z)$, we have that $(R_t h)(x_0,y_0) = (\mcal{A}^Z h)(x_0,y_0) + o(t)$. Therefore, in order to prove condition (\ref{eq:condlaw}), it suffices to show that $(\tilde{R}_t h)(x_0,y_0) = (\mcal{A}^Z h)(x_0,y_0) + o(t)$ where $(\tilde{R}_t)$ is defined by (\ref{approxkernel}) and the error term is allowed to depend on $h$ and $(x_0,y_0)$. This will follow from Step 1 in the proof of Theorem 2 (which has the same assumptions on $\Lambda$).
\ep
\color{black}
\comment{ \noindent\textbf{Step 7.} 
{Finally, we turn to the proof of condition \eqref{i5} in Definition \ref{idef}. To this end, we fix a $t\geq0$ and recall that, given $Z_2(0)=y_0$ for some $y_0\in\mcal{Y}$, the joint distribution/density of $\big(Z_1(0),Z_1(t)\big)$ at $(x_0, x_1)$ is $\Lambda(y_0,x_0)\,P_t(x_0,x_1)$ by condition \eqref{i3} in Definition \ref{idef}. The term $\Lambda(y_0,x_0)\,P_t(x_0,x_1)$ is an abuse of notation since $\Lambda$ is a density and $P_t$ is a semigroup. However, the meaning of the statement can be made rigorous through integration.

Similarly, given $Z_2(0)=y_0$, the joint distribution/density of $\big(Z_2(t),Z_1(t)\big)$ at $(y_1, x_1)$ reads $Q_t(y_0,y_1)\,\Lambda(y_1,x_1)$ by condition \eqref{i4} in Definition \ref{idef}. Hence, the conditional marginal distribution of $Z_1(t)$ at $x_1$ computes to $\int_{\mcal{Y}} Q_t(y_0,\mathrm{d}\tilde{y}_1)\,\Lambda(\tilde{y}_1,x_1)$. 

Therefore the conditional independence of $Z_1(0)$ and $Z_2(t)$ given $\big(Z_2(0),Z_1(t)\big)$ is equivalent to the statement that the joint distribution/density of $\big(Z_1(0), Z_1(t),Z_2(t)\big)$, given $Z_2(0)=y_0$, at $(x_0, x_1, y_1)$ is
\[
\int_{\mcal{Y}} Q_t(y_0,\mathrm{d}\tilde{y}_1)\,\Lambda(\tilde{y}_1,x_1)
\,\frac{\Lambda(y_0,x_0)\,P_t(x_0,x_1)}
{\int_{\mcal{X}} \Lambda(y_0,\tilde{x}_0)\,\mathrm{d}\tilde{x}_0\,P_t(\tilde{x}_0,x_1)}
\,\frac{Q_t(y_0,y_1)\,\Lambda(y_1,x_1)}
{\int_{\mcal{Y}} Q_t(y_0,\mathrm{d}\tilde{y}_1)\,\Lambda(\tilde{y}_1,x_1)}.
\]
In view of condition \eqref{i2} in Definition \ref{idef} this amounts to showing that the time $t$ transition operator of the process $Z$ can be written as
\eq\label{Z_transition}
P_t(x_0,x_1)\,\frac{Q_t(y_0,y_1)\,\Lambda(y_1,x_1)}
{\int_{\mcal{Y}} Q_t(y_0,\mathrm{d}\tilde{y}_1)\,\Lambda(\tilde{y}_1,x_1)},
\en
and the rest of the proof is devoted to a derivation of the latter formula.}

\medskip

{Note that, on the time interval $[0,t]$, the law of the diffusion $Z$ is absolutely continuous with respect to the law of the product diffusion $\widehat{Z}=(\widehat{Z}_1,\widehat{Z}_2)$ on $\mcal{X}\times\mcal{Y}$ with generator ${\mathcal A}^X+{\mathcal A}^Y$ and initial condition $\widehat{Z}(0)=Z(0)$, and the corresponding density is given by
\eq\label{Girsanov}
\exp\bigg(\int_0^t \big((\nabla_y V)(\widehat{Z}_2(s),\widehat{Z}_1(s))\big)'\mathrm{d}\widehat{M}_2(s) 
-\frac{1}{2}\,\int_0^t \big\|(\nabla_y V)(\widehat{Z}_2(s),\widehat{Z}_1(s))\big\|_{\rho(\widehat{Z}_2(s))}^2
\,\mathrm{d}s\bigg)
\en
where $\widehat{M}_2$ is the local martingale part of $\widehat{Z}_2$ and $\|y\|_{\rho(\widehat{Z}_2(s))}^2=y'\rho\big(\widehat{Z}_2(s)\big)\,y$. Indeed, start with the product diffusion $\widehat{Z}$, and consider the stopping times
\[
\tau_R=\inf\bigg\{s\ge0:\;\int_0^s \big\|(\nabla_y V)(\widehat{Z}_2(\tilde{s}),\widehat{Z}_1(\tilde{s}))\big\|
_{\rho(\widehat{Z}_2(\tilde{s}))}^2\,\mathrm{d}\tilde{s}\ge R\bigg\},\quad R\in\nn,
\]
as well as the change of measure densities 
\[
\exp\bigg(\int_0^{t\wedge\tau_R} \big((\nabla_y V)(\widehat{Z}_2(s),\widehat{Z}_1(s))\big)'\mathrm{d}\widehat{M}_2(s) 
-\frac{1}{2}\,\int_0^{t\wedge\tau_R} \big\|(\nabla_y V)(\widehat{Z}_2(s),\widehat{Z}_1(s))\big\|_{\rho(\widehat{Z}_2(s))}^2\,\mathrm{d}s\bigg),
\]
$R\in\nn$. By Girsanov's Theorem (see, e.g., Chapter 3, Theorem 5.1 and Proposition 5.12 in \cite{KS}), for each $R\in\nn$, such change of measure is well-defined, and $\widehat{Z}$ solves the SDE for $Z$ on $[0,t\wedge\tau_R]$ under the new measure. Since the law of the solution is unique (Assumption \ref{main_asmp}), it follows that, on each of the events $\{\tau_R\ge t\}$, the law of $Z$ on $[0,t]$ is absolutely continuous with respect to the law of $\widehat{Z}$ on $[0,t]$, and the corresponding density is given by \eqref{Girsanov}. Moreover, the assumed continuity of $\nabla_y V$ and $\rho$ implies that, with probability one,
\[
\int_0^t \big\|(\nabla_y V)(Z_2(s),Z_1(s))\big\|_{\rho(Z_2(s))}^2\,\mathrm{d}s<\infty,
\]
and consequently the events $\{\tau_R\ge t\}$, as $R\rightarrow \infty$, increase to a probability one event. The desired absolute continuity and \eqref{Girsanov} readily follow.}

\medskip

{Next, for any fixed $\epsilon\in(0,t)$, set $t^\epsilon_k=(k\epsilon)\wedge t$, $k=0,1,\ldots,K(\epsilon)$ where $K(\epsilon)$ is the smallest integer greater or equal to ${t}/{\epsilon}$. We claim that the density of \eqref{Girsanov} can be then rewritten as the almost sure limit
\eq\label{Girsanov'}
\begin{split}
& \lim_{\epsilon\downarrow0}\;\exp\bigg(\sum_{k=1}^{K(\epsilon)} \int_{t^\epsilon_{k-1}}^{t^\epsilon_k} 
\big((\nabla_y V)(\widehat{Z}_2(s),\widehat{Z}_1(t^\epsilon_k))\big)'\mathrm{d}\widehat{M}_2(s) \\
&\qquad\qquad\;\,
-\frac{1}{2}\,\sum_{k=1}^{K(\epsilon)}\,\int_{t^\epsilon_{k-1}}^{t^\epsilon_k} \big\|(\nabla_y V)(\widehat{Z}_2(s),\widehat{Z}_1(t^\epsilon_k))\big\|_{\rho(\widehat{Z}_2(s))}^2\,\mathrm{d}s\bigg)
\end{split}
\en
along a suitable subsequence. Indeed, recall that the functions $\nabla_y V$ and $\rho$ are assumed to be continuous. Now, compare the exponents in \eqref{Girsanov} and \eqref{Girsanov'}. The difference between the stochastic integrals can be viewed as a standard Brownian motion evaluated at the quadratic variation of that difference (see, e.g., Chapter 3, Theorem 4.6 in \cite{KS}), and the latter quadratic variation tends to zero almost surely in the limit $\epsilon\downarrow0$ thanks to the uniform continuity of $\nabla_y V$ and the uniform boundedness of $\rho$ on compact sets. It follows that the difference between the stochastic integrals converges to zero in probability and, hence, also almost surely along a suitable subsequence. The uniform continuity of $\nabla_y V$ and the uniform boundedness of $\rho$ on compact sets also show that the difference between the bounded variation terms in the exponents of \eqref{Girsanov} and \eqref{Girsanov'} tends to zero almost surely as well, yielding the representation \eqref{Girsanov'}.}

\medskip

{Note further that, for each $k=1,2,\ldots,K(\epsilon)$, 
\[
\begin{split}
\Lambda(\widehat{Z}_2(t^\epsilon_k),\widehat{Z}_1(t^\epsilon_k))
-\Lambda(\widehat{Z}_2(t^\epsilon_{k-1}),\widehat{Z}_1(t^\epsilon_k))
=\int_{t^\epsilon_{k-1}}^{t^\epsilon_k} 
\big((\nabla_y \Lambda)(\widehat{Z}_2(s),\widehat{Z}_1(t^\epsilon_k))\big)'\mathrm{d}\widehat{M}_2(s) \\
+\int_{t^\epsilon_{k-1}}^{t^\epsilon_k} ({\mathcal A}^Y \Lambda)(\widehat{Z}_2(s),\widehat{Z}_1(t^\epsilon_k))
\,\mathrm{d}s.
\end{split}
\]
This follows from an approximation of $\Lambda(\cdot,\widehat{Z}_1(t^\epsilon_k))$ by infinitely differentiable functions $g_q(\cdot,\widehat{Z}_1(t^\epsilon_k))$, $q\in\nn$ in $C_0(\mcal{Y})$ such that $g_q(\cdot,\widehat{Z}_1(t^\epsilon_k))\to\Lambda(\cdot,\widehat{Z}_1(t^\epsilon_k))$, $(\nabla_y g_q)(\cdot,\widehat{Z}_1(t^\epsilon_k))\to(\nabla_y\Lambda)(\cdot,\widehat{Z}_1(t^\epsilon_k))$, and $({\mathcal A}^Y g_q)(\cdot,\widehat{Z}_1(t^\epsilon_k))\to({\mathcal A}^Y \Lambda)(\cdot,\widehat{Z}_1(t^\epsilon_k))$ uniformly on $\mcal{Y}$ (see Step 2 for a construction of such functions), an application of It\^o's formula to the difference $g_q(\widehat{Z}_2(t^\epsilon_k),\widehat{Z}_1(t^\epsilon_k))-g_q(\widehat{Z}_2(t^\epsilon_{k-1}),\widehat{Z}_1(t^\epsilon_k))$ (recall the independence of $\widehat{Z}_2$ from $\widehat{Z}_1$), and the limit transition $q\to\infty$ in probability. Another application of It\^o's formula (relying on the positivity of $\Lambda$) gives
\[
\begin{split}
\log \Lambda(\widehat{Z}_2(t^\epsilon_k),\widehat{Z}_1(t^\epsilon_k))
-\log \Lambda(\widehat{Z}_2(t^\epsilon_{k-1}),\widehat{Z}_1(t^\epsilon_k))
=\int_{t^\epsilon_{k-1}}^{t^\epsilon_k} 
\big((\nabla_y V)(\widehat{Z}_2(s),\widehat{Z}_1(t^\epsilon_k))\big)'\mathrm{d}\widehat{M}_2(s) \\
+\int_{t^\epsilon_{k-1}}^{t^\epsilon_k} 
\frac{{\mathcal A}^Y \Lambda}{\Lambda}(\widehat{Z}_2(s),\widehat{Z}_1(t^\epsilon_k))
\,\mathrm{d}s
-\frac{1}{2}\,\int_{t^\epsilon_{k-1}}^{t^\epsilon_k} \big\|(\nabla_y V)(\widehat{Z}_2(s),\widehat{Z}_1(t^\epsilon_k))\big\|_{\rho(\widehat{Z}_2(s))}^2\,\mathrm{d}s,
\end{split}
\]
$k=1,2,\ldots,K(\epsilon)$. Inserting this into \eqref{Girsanov'} one obtains
\eq\label{Girsanov''}
\begin{split}
\lim_{\epsilon\downarrow0}\;\prod_{k=1}^{K(\epsilon)} 
\bigg(\frac{\Lambda(\widehat{Z}_2(t^\epsilon_k),\widehat{Z}_1(t^\epsilon_k))}
{\Lambda(\widehat{Z}_2(t^\epsilon_{k-1}),\widehat{Z}_1(t^\epsilon_k))}
\,\exp\bigg(-\int_{t^\epsilon_{k-1}}^{t^\epsilon_k} 
\frac{{\mathcal A}^Y \Lambda}{\Lambda}(\widehat{Z}_2(s),\widehat{Z}_1(t^\epsilon_k))\,\mathrm{d}s\bigg)\bigg).
\end{split}
\en}

\medskip

{For each $k=1,2,\ldots,K(\epsilon)$, one can further perform the following manipulations sample pathwise:
\[
\begin{split}
& \frac{1}{\Lambda(\widehat{Z}_2(t^\epsilon_{k-1}),\widehat{Z}_1(t^\epsilon_k))}\,
\exp\bigg(-\int_{t^\epsilon_{k-1}}^{t^\epsilon_k} 
\frac{{\mathcal A}^Y \Lambda}{\Lambda}(\widehat{Z}_2(s),\widehat{Z}_1(t^\epsilon_k))\,\mathrm{d}s\bigg) \\
& = \frac{1}{\Lambda(\widehat{Z}_2(t^\epsilon_{k-1}),\widehat{Z}_1(t^\epsilon_k))
\Big(1+\int_{t^\epsilon_{k-1}}^{t^\epsilon_k} 
\frac{{\mathcal A}^Y \Lambda}{\Lambda}(\widehat{Z}_2(s),\widehat{Z}_1(t^\epsilon_k))\,\mathrm{d}s+o(\epsilon)\Big)} \\
& = \frac{1}{\Lambda(\widehat{Z}_2(t^\epsilon_{k-1}),\widehat{Z}_1(t^\epsilon_k))
+({\mathcal A}^Y \Lambda)(\widehat{Z}_2(t^\epsilon_{k-1}),\widehat{Z}_1(t^\epsilon_k))(t_k^\epsilon-t_{k-1}^\epsilon)
+o(\epsilon)} \\
& = \frac{1}{\Lambda(\widehat{Z}_2(t^\epsilon_{k-1}),\widehat{Z}_1(t^\epsilon_k))
+\int_0^{t^\epsilon_k-t^\epsilon_{k-1}} 
(Q_s\,{\mathcal A}^Y \Lambda)(\widehat{Z}_2(t^\epsilon_{k-1}),\widehat{Z}_1(t^\epsilon_k))\,\mathrm{d}s+o(\epsilon)} \\
& =\frac{1}{(Q_{t_k^\epsilon-t_{k-1}^\epsilon}\Lambda)(\widehat{Z}_2(t^\epsilon_{k-1}),\widehat{Z}_1(t^\epsilon_k))
+o(\epsilon)},
\end{split}
\]
where in the third equality we have used the Feller property of $(Q_t)$ (cf. property ($\mathrm{F}_2$) on p. 315 in \cite{Ka}) and in the fourth the Kolmogorov forward equation (see, e.g., Theorem 17.6 in \cite{Ka}). Since ${\mathcal A}^Y\Lambda$ and $\Lambda$ are assumed to be continuous (hence, uniformly bounded and uniformly continuous on compact sets), and ${\mathcal A}^Y\Lambda$ is assumed to be bounded on $\mcal{Y}\times K$ for any compact $K\subset\mcal{X}$, all $o(\epsilon)$ error terms above are uniform in $k$ (but may depend on the path of $\widehat{Z}$). At this point, \eqref{Girsanov''} can be rewritten as
\eq\label{Girsanov'''}
\lim_{\epsilon\downarrow0}\;\prod_{k=1}^{K(\epsilon)} 
\frac{\Lambda(\widehat{Z}_2(t^\epsilon_k),\widehat{Z}_1(t^\epsilon_k))}
{(Q_{t_k^\epsilon-t_{k-1}^\epsilon}\Lambda)(\widehat{Z}_2(t^\epsilon_{k-1}),\widehat{Z}_1(t^\epsilon_k))}.
\en
This can be seen by taking the logarithm of the ratio of \eqref{Girsanov'''} and \eqref{Girsanov''}, using thereafter a uniform in $k$, $\epsilon$ positive lower bound on $(Q_{t_k^\epsilon-t_{k-1}^\epsilon}\Lambda)(\widehat{Z}_2(t^\epsilon_{k-1}),\widehat{Z}_1(t^\epsilon_k))$ (resulting from a positive lower bound on $\Lambda$ on the set product of an open neighborhood of the range of $\widehat{Z}_2$ on $[0,t]$ with the range of $\widehat{Z}_1$ on $[0,t]$), and finally noting $K(\epsilon)\,o(\epsilon)=o(1)$.}

\medskip

{To conclude we fix an initial condition $z_0=(x_0,y_0)\in\mcal{X}\times\mcal{Y}$ and a measurable set $\mcal{Z}\subset\mcal{X}\times\mcal{Y}$ and make the following computation:
\[
\begin{split}
\pp\big(Z(t)\in\mcal{Z}\,|\,Z(0)=z_0\big)
= \ev\bigg[\lim_{\epsilon\downarrow0}\;\prod_{k=1}^{K(\epsilon)} 
\frac{\Lambda(\widehat{Z}_2(t^\epsilon_k),\widehat{Z}_1(t^\epsilon_k))}
{(Q_{t_k^\epsilon-t_{k-1}^\epsilon}\Lambda)(\widehat{Z}_2(t^\epsilon_{k-1}),\widehat{Z}_1(t^\epsilon_k))}
\,\mathbf{1}_{\mcal{Z}}(\widehat{Z}(t))\,\Big|\,\widehat{Z}(0)=z_0\bigg] \\
\leq \liminf_{\epsilon\downarrow0}\; 
\ev\bigg[\prod_{k=1}^{K(\epsilon)} 
\frac{\Lambda(\widehat{Z}_2(t^\epsilon_k),\widehat{Z}_1(t^\epsilon_k))}
{(Q_{t_k^\epsilon-t_{k-1}^\epsilon}\Lambda)(\widehat{Z}_2(t^\epsilon_{k-1}),\widehat{Z}_1(t^\epsilon_k))}
\,\mathbf{1}_{\mcal{Z}}(\widehat{Z}(t))\,\Big|\,\widehat{Z}(0)=z_0\bigg] \\
= \liminf_{\epsilon\downarrow0}\;
\int_{\mcal{X}\times\mcal{Y}} P_{t^\epsilon_1}(x_0,x_1)
\,\frac{Q_{t^\epsilon_1}(y_0,y_1)\,\Lambda(y_1,x_1)}
{\int_{\mcal{Y}} Q_{t^\epsilon_1}(y_0,\mathrm{d}\tilde{y}_1)\,\Lambda(\tilde{y}_1,x_1)} 
\ldots \int_{\mcal{X}\times\mcal{Y}} 
P_{t^\epsilon_{K(\epsilon)}-t^\epsilon_{K(\epsilon)-1}}\big(x_{K(\epsilon)-1},x_{K(\epsilon)}\big) \\
\frac{Q_{t^\epsilon_{K(\epsilon)}-t^\epsilon_{K(\epsilon)-1}}\big(y_{K(\epsilon)-1},y_{K(\epsilon)}\big)
\,\Lambda\big(y_{K(\epsilon)},x_{K(\epsilon)}\big)}
{\int_{\mcal{Y}} Q_{t^\epsilon_{K(\epsilon)}-t^\epsilon_{K(\epsilon)-1}}\big(y_{K(\epsilon)-1},
\mathrm{d}\tilde{y}_{K(\epsilon)}\big)\,\Lambda\big(\tilde{y}_{K(\epsilon)},x_{K(\epsilon)}\big)}
\,\mathbf{1}_{\mcal{Z}}\big(x_{K(\epsilon)},y_{K(\epsilon)}\big) \\
= \int_{\mcal{X}\times\mcal{Y}} P_t(x_0,x_1)
\,\frac{Q_t(y_0,y_1)\,\Lambda(y_1,x_1)}
{\int_{\mcal{Y}} Q_t(y_0,\mathrm{d}\tilde{y}_1)\,\Lambda(\tilde{y}_1,x_1)}
\,\mathbf{1}_{\mcal{Z}}(x_1,y_1).
\end{split}
\]
Hereby, the first equality follows from the preceding considerations; the first inequality is a direct consequence of Fatou's Lemma; the second equality is an application of the Markov property of $\widehat{Z}$; and the last equality follows from the fact that the transition operators of \eqref{Z_transition} satisfy the Chapman-Kolmogorov equation, since each of them corresponds to an evolution of the $x$-components according to $P_t$ and the subsequent sampling of the $y$-components given their previous value and the new value of the $x$-components according to the Bayes rule. Finally, note that the first and the last expressions of the latter display give the probability of $\mcal{Z}$ under two different probability measures. By considering the same inequality for the complement of $\mcal{Z}$ in $\mcal{X}\times\mcal{Y}$ it now follows that the probability of $\mcal{Z}$ under the two probability measures must be the same and, since $\mcal{Z}$ was arbitrary, the two probability measures have to be equal.} }

\medskip

We now turn to the proof of Theorem \ref{main2}. 

\medskip

\noindent\textbf{Proof of Theorem \ref{main2}. 
Step 1.} We start by fixing a point $(x_0,y_0)\in\mcal{X}\times\mcal{Y}$ and by assuming condition \eqref{eq:condlaw}. 
{To identify the generator ${\mathcal A^Z}$ of $Z$, consider a $C_c^\infty(\mcal{X}\times \mcal{Y})$-function $h$ with the appropriate boundary conditions.

We claim first that  the probability of $Z_1$ leaving a small enough ball around $x_0$ decays exponentially in $\frac{1}{t}$ as $t \downarrow 0$. If $X$ satisfies Assumption 1(a) or $x_0$ is in the interior of $\mcal{X}$, this is a consequence of the local boundedness of the drift and diffusion coefficients. If $X$ satisfies Assumption 1(b) and $x_0$ is on the boundary of $\mcal{X}$, one can apply a (Lipschitz) transformation as in Section $1.3$ of \cite{AO} to (up until the exit of a small ball) reduce the problem to that of locally bounded coefficients in the half-space with normal reflection.  The Skorokhod map on this space is Lipschitz by Theorem 2.2 in \cite{DIs}. Thus, again due to the local boundedness of the coefficients, the probability of leaving a small ball decays exponentially in $\frac{1}{t}$. Therefore, when considering the integral $\tilde{R}_t h$, it suffices to integrate the $x_1$ variable over a compact region $K$ containing a neighborhood of $x_0$. Also, due to the exponentially small probability of leaving a small ball around $x_0$, we may further restrict the integral to the compact set $\hat{K} = K \cap \overline{\cup_{y_1 \in \mcal{Y}}\text{supp}\big( h(\cdot,y_1)\big)}$ where $\overline{E}$ denotes the closure of a set $E$.  %support contained in one of the neighborhoods $U(x)$ introduced in the statement of the theorem and a $C_c^\infty(\mcal{Y})$ function $g$. Note that finite linear combinations of functions $h(x,y)=f(x)\,g(y)$ are dense in $C_0(\mcal{X}\times\mcal{Y})$ by the Stone-Weierstrass Theorem and that ${\mathcal A}$ is a closed operator as the generator of a Feller semigroup (Lemma 17.8 in \cite{Ka}). Hence, it suffices to identify the action of ${\mathcal A}$ on functions $h(x,y)=f(x)\,g(y)$ with $f,\,g$ as above.}

%The latter states that for any $C_c^\infty(\mcal{X}\times\mcal{Y})$ function $h$ and as $t \rightarrow 0+$, one has   
%\eq\label{eq:condlaw}
%\ev[h(Z(t))\mid Z(0)=(x_0,y_0)] 
%=\int_{{\mathcal X}\times{\mathcal Y}} \frac{\Lambda(y_1,x_1)\,h(x_1,y_1)}{\int_{\mathcal Y} 
%Q_t(y_0,\mathrm{d}\tilde{y}_1)\,\Lambda(\tilde{y}_1,x_1)}\,Q_t(y_0,\mathrm{d}y_1)\,P_t(x_0,\mathrm{d}x_1) + o(t). 
%\en
%Here the constant in $o(t)$ depends only on the function $h$ and not on $(x_0, y_0)$. 
%\medskip

\medskip 

{Recall that, for any $x_1\in\mcal{X}$, $\Lambda(\cdot,x_1)$ belongs to the domain of ${\mathcal A}^Y$ by assumption. Therefore the product rule \eqref{prod_rule} for ${\mathcal A}^Y$ shows that $\Lambda(\cdot,x_1)\,h(x_1,\cdot)$ must also belong to the domain of ${\mathcal A}^Y$ for every $x_1\in\mcal{X}$. %Inserting $h(x_1,y_1)=f(x_1)\,g(y_1)$ into \eqref{eq:condlaw}
Using  (\ref{eq:condlaw}) and the Kolmogorov forward equation for the Feller semigroup $(Q_t)$ twice (with the initial conditions $\Lambda(\cdot,x_1)\,h(x_1,\cdot)$ and $\Lambda(\cdot,x_1)$, respectively), one obtains
\eq\label{eq:condlaw_re}
\begin{split}
&\ev[h(Z_1(t),Z_2(t))\mid Z(0)=(x_0,y_0)] \\
&=\int_{\hat{K}} \frac{\Lambda(y_0,x_1)\,h(x_1,y_0)+t\,\mathcal{A}^Y\big(\Lambda(\cdot,x_1)\,h(x_1,\cdot)\big)(y_0)+t\,\epsilon_1(t,x_1,y_0)}
{\Lambda(y_0,x_1)+t\,{\mathcal A}^Y \Lambda(\cdot,x_1)(y_0)+t\,\epsilon_2(t,x_1,y_0)}\,P_t(x_0,\mathrm{d}x_1) + o(t),
\end{split}
\en
where the constant in $o(t)$ depends only on $h$ and $(x_0,y_0)$ and where we have defined
\begin{eqnarray}\label{uniferror}
&& \epsilon_1(t,x_1,y_0)=\frac{1}{t}\,\int_0^t Q_s\big({\mathcal A}^Y(\Lambda(\cdot,x_1)\,h(x_1,\cdot))\big)(y_0)\,\mathrm{d}s
-{\mathcal A}^Y(\Lambda(\cdot,x_1)\,h(x_1,\cdot))(y_0), \\
&& \epsilon_2(t,x_1,y_0)=\frac{1}{t}\,\int_0^t Q_s({\mathcal A}^Y\Lambda(\cdot,x_1))(y_0)\,\mathrm{d}s
-{\mathcal A}^Y\Lambda(\cdot,x_1)(y_0).
\end{eqnarray}
Note that, in view of a product rule for ${\mathcal A}^Y$ as in \eqref{prod_rule} and the continuity of $\Lambda$, $\nabla_y\Lambda$, and ${\mathcal A}^Y\Lambda$, the function ${\mathcal A}^Y(\Lambda\,h)$ is uniformly bounded on $\hat{K} \times \mcal{Y}$ and uniformly continuous on $\hat{K}\times \tilde{K}$ for any compact $\tilde{K}\subset\mcal{Y}$. Moreover, by assumption the same holds for the function ${\mathcal A}^Y \Lambda$. It follows that the error terms $\epsilon_1$ and $\epsilon_2$ in \eqref{eq:condlaw_re} converge to zero in the limit $t\downarrow 0$ uniformly in $\hat{K}$\color{black}.% \in U(x)$.} 

\medskip

{Next, we use the elementary expansion
\eq\label{elem_exp}
\frac{a_1+ta_2+ta_3}{b_1+tb_2+tb_3}=\frac{a_1}{b_1}+t\,\frac{a_2b_1-a_1b_2}{b_1^2}
+t\,\frac{a_3b_1^2-a_1b_1b_3+t(a_1b_2^2+a_1b_2b_3-a_2b_1b_2-a_2b_1b_3)}{b_1^3+tb_1^2(b_2+b_3)}.
\en
Consider the first term on the right-hand side of \eqref{eq:condlaw_re} (i.e., the term preceding ``$+o(t)$"). By applying \eqref{elem_exp} to the fraction inside the integral, it can be rewritten as
\eq\label{eq:condlaw_re1}
\int_{\hat{K}}\Big( h(x_1,y_0)
+t\,\frac{{\mathcal A}^Y(\Lambda(\cdot,x_1)\,h(x_1,\cdot))(y_0)-h(x_1,y_0)\,{\mathcal A}^Y\Lambda(\cdot,x_1)(y_0)}{\Lambda(y_0,x_1)}
+t\,\epsilon_3\Big)\,P_t(x_0,\mathrm{d}x_1)
\en
where an explicit expression for the remainder $\epsilon_3 = \epsilon_3(t,x_1,y_0)$ can be read off from \eqref{elem_exp}. The uniform in $x_1\in \hat{K}$ control on $\epsilon_1,\,\epsilon_2$ together with the continuity of $\Lambda\,h$, ${\mathcal A}^Y(\Lambda\,h)$, $\Lambda$, and ${\mathcal A}^Y\Lambda$ show further that $\epsilon_3$ converges to zero in the limit $t\downarrow 0$ uniformly in $x_1\in \hat{K}$.} 

\medskip

{We now interchange sum and integration in the formula \eqref{eq:condlaw_re1}. First, since $h(\cdot,y_0)$ belongs to the domain of ${\mathcal A}^X$, one has
\[
\int_{\hat{K}} h(x_1,y_0)\,P_t(x_0,\mathrm{d}x_1)=h(x_0,y_0)+t\,(\mcal{A}^Xh(\cdot,y_0))(x_0)+o(t),
\quad t\downarrow0.
\] 
Second, a product rule for ${\mathcal A}^Y$ as in \eqref{prod_rule} and the continuity in the variable $x_1$ of all the functions involved yield
\[
\begin{split}
& \int_{\hat{K}} t\,\frac{{\mathcal A}^Y(\Lambda(\cdot,x_1)\,h(x_1,\cdot))(y_0)-h(x_1,y_0)\,{\mathcal A}^Y\Lambda(\cdot,x_1)(y_0)}
{\Lambda(y_0,x_1)}\,P_t(x_0,\mathrm{d}x_1) \\
& =t\,\int_{\hat{K}}\frac{(\nabla_y\Lambda(y_0,x_1))'\,\rho(y_0)\,\nabla_y h(x_1,y_0)
+\Lambda(y_0,x_1)\,({\mathcal A}^Yh(x_1,\cdot))(y_0)}{\Lambda(y_0,x_1)}\,P_t(x_0,\mathrm{d}x_1) \\
&=t\,\big((\nabla_y V(y_0,x_0))'\,\rho(y_0)\,\nabla_y h(x_0,y_0)+({\mathcal A}^Y h(x_0,\cdot))(y_0)\big)+o(t),\quad \text{as}\; t\downarrow0.
\end{split}
\]
Lastly, the uniform in $x_1 \in \hat{K}$  control on $\epsilon_3$ reveals
\[
\int_{\hat{K}} t\,\epsilon_3(t,x_1,y_0)\,P_t(x_0,\mathrm{d}x_1)=o(t),\quad t\downarrow0.
\]
Putting everything together one obtains
\[
\ev[h(Z_1(t),Z_2(t))\, \big|\, Z(0)=(x_0,y_0)]=h(x_0,y_0)+t\,({\mathcal A}^Z h)(x_0,y_0)+o(t),\quad t\downarrow0 
\]
with ${\mathcal A}^Z$ of \eqref{Zgen}.
We conclude by \cite[Theorem 1.33]{LM} that $h \in \mcal{D}(\mcal{A}^Z)$ and $\mcal{A}^Z h$ is given by the application of the differential operator to $h$. 

\medskip

\noindent\textbf{Step 2.} {It remains to prove \eqref{PDE} and \eqref{comm}. To this end, let $f$ be a bounded measurable function on $\mcal{X}$. By the intertwining identity (see Definition \ref{def:intertwin}), $L\,P_t\,f = Q_t\,L\,f$ for all $t\ge0$, that is,
\eq\label{eq:thm2step2disp1}
\int_{\mcal{X}} \Lambda(y,x)\,(P_t\,f)(x)\,\mathrm{d}x=Q_t\,\int_{\mcal{X}} \Lambda(y,x)\,f(x)\,\mathrm{d}x,
\quad y\in\mcal{Y},\;t\ge0. 
\en
Let $(P^*_t)$ denote the adjoint semigroup associated with $(P_t)$ acting on the space of signed Borel regular measures on $\mcal{X}$ of finite total variation (i.e., the Banach space dual to $C_0(\mcal{X})$ by the Riesz Representation Theorem). Using Fubini's Theorem we obtain from \eqref{eq:thm2step2disp1}: 
\[
\int_{\mcal{X}} f(x)\,P_t^*\Lambda(y,\mathrm{d}x)
=\int_{\mcal{X}} f(x)\,(Q_t\,\Lambda)(y,x)\,\mathrm{d}x,\quad y\in\mcal{Y},\;t\ge0. 
\]     
Consequently, for all $y\in\mcal{Y}$ and $t>0$, one has the equality of measures $P^*_t\Lambda(y,\mathrm{d}x)=(Q_t\,\Lambda)(y,x)\,\mathrm{d}x$ on $\mcal{X}$, yielding
\[
\frac{P^*_t \Lambda(y,\mathrm{d}x)-\Lambda(y,x)\,\mathrm{d}x}{t}
=\frac{(Q_t\,\Lambda)(y,x)-\Lambda(y,x)}{t}\,\mathrm{d}x.   
\]
For fixed $y\in\mcal{Y}$ and in the limit $t\downarrow0$, the left-hand side converges weakly to $\left({\mathcal A^X}^* \right)\Lambda(y,\mathrm{d}x)$ (see, e.g., Section II.2.5 in \cite{EN}). Due to the Kolmogorov forward equation for the Feller semigroup $(Q_t)$, the ratio on the right-hand side converges to ${\mathcal A}^Y\Lambda(y,x)$ locally uniformly in $x$ as discussed in Step 1. Consequently, the measure $({\mathcal A^X})^*\Lambda(y,\mathrm{d}x)$ must have ${\mathcal A}^Y\Lambda(y,x)$ as its density, i.e., \eqref{PDE} holds.}

\medskip

{To obtain \eqref{comm} we pick a $C_0(\mcal{X})$-function $f$ in the domain of ${\mathcal A}^X$ and rewrite the intertwining identity as 
\eq\label{pre_comm}
\frac{L\,P_t\,f  - L\,f}{t} = \frac{Q_t\,L\,f - L\,f}{t}, \quad t>0.
\en
Since $f$ is in the domain of ${\mathcal A}^X$, one has $\frac{P_t\,f-f}{t}\to{\mathcal A}^X f$ in $C_0(\mcal{X})$ in the limit $t\downarrow0$ and, hence, $\frac{L\,P_t\,f  - L\,f}{t}\to L{\mathcal A}^X f$ in $C_0(\mcal{Y})$. Note that, being a stochastic transition operator, $L$ is a bounded linear operator from $C_0(\mcal{X})$ to $C_0(\mcal{Y})$. Therefore the uniform (in $y$) $t\downarrow0$ limit of the right-hand side of \eqref{pre_comm} must exist as well and, by the definition of the generator ${\mathcal A}^Y$, be given by ${\mathcal A}^Y Lf$. The commutativity relation \eqref{comm} readily follows. \ep}

\bigskip

Two restrictions of Theorem \ref{main1} are the assumptions that the kernel $\Lambda$ satisfies \eqref{PDE} on the entire space $\mX \times \mY$ and is stochastic. This leaves out situations where the domain of $Z$ is not of product form or $\Lambda$ is a nonnegative, but not necessarily stochastic solution of \eqref{PDE}. Our next results relax these constraints and will allow us to cover several important examples. For the sake of clarity we keep the following theorem restricted to the case where the state space of $Z$ is (almost) polyhedral and the components of $Z$ are driven by independent standard Brownian motions. This covers all known examples, although it is not hard to see that the scope of the theorem can be enlarged significantly.

Consider the set-up of Assumption \ref{main_asmp} with $a_{ij}=\delta_{ij}$ and $\rho_{kl}=\delta_{kl}$ (i.e., identity diffusion matrices). As before, we write $z\in \rr^{m+n}$ as $z=(x,y)$ where $x\in\rr^m$ and $y\in\rr^n$. Let $D\subset\rr^{m+n}$ be a domain such that:
\begin{enumerate}

\item[(i)] \label{i} $D$ is convex with nonempty interior.
\item[(ii)] \label{ii} The projection of $D$ on $\rr^m$, given by $\cup_{y\in \rr^n} D(\cdot,y)$, is $\mX$, and the projection of $D$ on $\rr^n$, given by $\cup_{x\in \rr^m} D(x,\cdot)$, is $\mY$  which we assume is open.
\item[(iii)] \label{iii}For every $y\in\mY$, the domain $D(y):=D(\cdot,y)$ has a boundary $\partial D(y)$ such that the Divergence Theorem and Green's second identity hold for $D(y)$. For example, piecewise smooth boundaries suffice.
\item[(iv)] \label{iv}At each point $x\in\partial D(y)$ the directional derivatives $\Psi^j$ of that boundary point with respect to changes in the coordinates $y_j$ exist and are piecewise constant in $(x,y)$. In addition, $\eta=\sum_{j=1}^n\Psi^j\,\langle\Psi^j,\eta\rangle$ on $\partial D(y)$ where $\eta$ is the unit outward normal vector field on $\partial D(y)$.
\end{enumerate}

\comment{
\color{red} NOTE: The below conditions are the old conditions.
\begin{enumerate}
[(i)]

\item The projection of $D$ on $\rr^m$, given by $\cup_{y\in \rr^n} D(\cdot,y)$, is $\mX$, and the projection of $D$ on $\rr^n$, given by $\cup_{x\in \rr^m} D(x,\cdot)$, is $\mY$  which we assume is open. Moreover, the boundary of $D$ in $\mcal{X}\times\mcal{Y}$ can be locally parametrized as the graph $(x(y),y)'$ of a smooth function $x(y)$.  
\item For every $y\in\mY$, the domain $D(y):=D(\cdot,y)$ has a boundary $\partial D(y)$ such that the Divergence Theorem and Green's second identity hold for $D(y)$. For example, piecewise smooth boundaries suffice. 
\item At each point $x\in\partial D(y)$ the directional derivatives $\Psi^j$ of that boundary point with respect to changes in the coordinates $y_j$ exist and are piecewise constant in $(x,y)$. In addition, $\eta=\sum_{j=1}^n\Psi^j\,\langle\Psi^j,\eta\rangle$ on $\partial D(y)$ where $\eta$ is the unit outward normal vector field on $\partial D(y)$. 
\end{enumerate}
}
\color{black}
\medskip 
In the setting where the domain is not of product form, we rely on reflection in order to keep the diffusion process in the domain. When the process is started at the boundary of $D$, we do not expect (\ref{eq:condlaw}) to hold. We consider a modified condition:
\smallskip
\begin{enumerate}[]
\item  For every $h \in C_c^{\infty}(\overset{\circ}{D}) \cap \mcal{D}(\mcal{A}^Z)$ and every $(x_0,y_0)$ \textit{in the interior of} $D$, in the regime as $t \downarrow 0$, $\ev[h(Z(t))\!\mid\!Z(0)=(x_0,y_0)]$ is equal to    
\eq\label{eq:condlawinterior}
\int_{{\mathcal X}\times{\mathcal Y}} h(x_1,y_1)\tilde{R}_t((x_0,y_0),\mathrm{d}(x_1,y_1)) + o(t). 
\en
Here, the error term $o(t)$ is allowed to depend on $h$ as well as $(x_0,y_0)$.
\end{enumerate}

\color{black}
\medskip
The following regularity conditions on the link are assumed. 

\begin{asmp}\label{Asmp3}
Suppose that $L$ is an integral operator, as in Assumption \ref{asmpthm1}, mapping $C_0(\mcal{X})$ into $C_0(\mcal{Y})$ with kernel $\Lambda$ being strictly positive and continuous on $D$. As before, write $V$ for $\log \Lambda$. Moreover, assume:
\begin{enumerate}[(i)]
\item $\Lambda$ is continuously differentiable in $x$ in the interior of $D$, and $\nabla_x \Lambda$ extends to a continuous function on $D$
\item $\Lambda$ is twice continuously differentiable in $y$ on a neighborhood $U_\partial$ of the boundary of $D$ in $\mcal{X}\times\mcal{Y}$. 
\item For every $x$, $\Lambda$ can be extended to a nonnegative function $\tilde{\Lambda}$ on $\mcal{X}\times \mcal{Y}$ such that $\tilde{\Lambda}(\cdot,x) \in C^2(\mcal{Y})$ and  $\mcal{A}^Y\tilde{\Lambda}$ is continuous on $\mcal{X} \times \mcal{Y}$. Here, $\mcal{A}^Y$ should be interpreted as a differential operator.
\item For every $y \in \mcal{Y}$ and every compact set $K \subseteq \mcal{X}$, there exist $p>1$, $C<\infty$, and $M <\infty$ such that in the regime as $t \downarrow 0$,
\[
\mathbb{E}_y[\tilde{\Lambda}(Y(t),x)^p] \leq Ct^{-M}
\]
uniformly over $x \in K$.
\item For every $y\in\mcal{Y}$, the measure $\left({\mathcal A}^X\right)^*\Lambda(y,\cdot)$ integrated against each $f\in C_c^\infty(D(y))$ gives 
\eq\label{PDE_mod}
\int_{D(y)} ({\mathcal A}^Y \Lambda)\,f\,\mathrm{d}x
+\frac{1}{2}\,\int_{\partial D(y)} \Lambda\,\iprod{2f\,b + \nabla f-f\,\nabla_x V,\eta}\,\mathrm{d}\theta(x)
\en
where $\theta$ is the Lebesgue surface measure on $\partial D(y)$.
\end{enumerate}
\end{asmp}

\begin{rmk}
Condition (iv) in Assumption \ref{Asmp3} is needed to prove (\ref{eq:condlawinterior}), but conditions (i)-(iv) of Definition \ref{idef} hold without this assumption. In Section 5, we check this condition when $Y$ is a Dyson Brownian motion and $\tilde{\Lambda}(y,x)$ is the inverse of the Vandermonde determinant of $y$.
\end{rmk}
\begin{rmk}\label{PDE_simpl_rmk}
{A particular case in which the representation \eqref{PDE_mod} applies is when $b$ is continuously differentiable, $\Lambda$ is twice continuously differentiable in $x$, and \eqref{PDE} holds on $D$ with $({\mathcal A}^X)^*$ being interpreted as a differential operator. Indeed, in that case one can use the Divergence Theorem and Green's second identity to compute  
\[
\begin{split}
\int_{D(y)} \Lambda\,(\mcal{A}^X f)\,\mathrm{d}x 
=&\int_{D(y)}\Lambda\iprod{b,\nabla f}\,\mathrm{d}x+\frac{1}{2}\int_{D(y)}\Lambda\,\Delta f\,\mathrm{d}x \\
=&-\int_{D(y)} \mathrm{div}_x(\Lambda\,b)\,f\,\mathrm{d}x
+\int_{\partial D(y)} \Lambda\,f\,\iprod{b,\eta}\,\mathrm{d}\theta(x) \\
&+\frac{1}{2}\int_{D(y)} (\Delta_x\,\Lambda)\,f\,\mathrm{d}x
+\frac{1}{2}\int_{\partial D(y)} \Lambda\,\iprod{\nabla f-f\,\nabla_x V, \eta}\,\mathrm{d}\theta(x) \\
=&\,\int_{D(y)} ((\mcal{A}^X)^*\Lambda)\,f\,\mathrm{d}x
+\frac{1}{2}\int_{\partial D(y)} \Lambda\,\iprod{2f\,b + \nabla f-f\,\nabla_x V,\eta}\,\mathrm{d}\theta(x) \\
=&\,\int_{D(y)} (\mcal{A}^Y \Lambda)\,f\,\mathrm{d}x
+\frac{1}{2}\int_{\partial D(y)} \Lambda\,\iprod{2f\,b + \nabla f-f\,\nabla_x V,\eta}\,\mathrm{d}\theta(x).
\end{split}
\]}
\end{rmk}

\begin{thm}\label{thm:reflect} {Let $Z=(Z_1,Z_2)$ be a diffusion process on $D$ with generator given by \eqref{Zgen} and boundary conditions of ${\mathcal A}^X$ on $\partial\mcal{X}\times\mcal{Y}$. Assume that  %boundary conditions of
${\mathcal A}^Y$ has no boundary conditions
%on $\mcal{X}\times\partial\mcal{Y}$
 \color{black} and the normal reflection of the $Z_2$-components on $\partial D(Z_1(\cdot),\cdot)$. Suppose that the associated stochastic differential equation with reflection is well-posed and its solution is a Feller-Markov process with $C_c^{\infty}(D) \cap \mcal{D}(\mcal{A}^Z)$ being a core for the domain of $Z$.}
{Finally, suppose that  
\eq\label{LambdaNeumann}
\Lambda\,\langle b,\eta\rangle-\langle\nabla_x\Lambda,\eta\rangle
=\sum_{j=1}^m \iprod{ \Psi^j,\eta} \big(\gamma_j\,\Lambda+\partial_{y_j}\Lambda\big) 
\;\;\;\mathrm{on}\;\;\partial D(y)\;\;\mathrm{for\;each}\;\;y\in\mY.
\en
Then $Z=Y\iprod{L}X$ and $Z$ satisfies \eqref{eq:condlawinterior}, provided that $Z(0)$ is as in condition \eqref{i1} of Definition \ref{idef}.}  
\end{thm}

\begin{rmk}
{The normal reflection of the $y$-components of $Z$ on $\partial D(Z_1(\cdot),\cdot)$ can be equivalently phrased as a Neumann boundary condition with respect to the vector field  
\eq\label{thm3Neumann}
\sum_{j=1}^n \langle \Psi^j,\eta\rangle\,\partial_{y_j}\quad\mathrm{on}\quad\partial D(y)  
\en
for the generator of $Z$. Indeed, parametrizing $\partial D$ locally as the graph $(x(y,\xi),y)'$ of a smooth function $x(y,\xi)$ and writing $\eta_i$ for the components of $\eta$ one computes
\[
\sum_{j=1}^n \langle \Psi^j,\eta\rangle\,\partial_{y_j}
=\sum_{j=1}^n \sum_{i=1}^m \partial_{y_j} x_i(y,\xi)\,\eta_i\,\partial_{y_j}
=\Big\langle \sum_{i=1}^m \eta_i\,\nabla x_i(y,\xi),\nabla_y\Big\rangle.  
\]   
Moreover, letting $\hat{\eta}$ be the unit outward normal vector field on $\partial D(x,\cdot)$ one finds locally a constant $c>0$ such that $\eta+c\,\hat{\eta}$ is an outward normal vector field on $\partial D$ and, in particular, $\sum_{i=1}^m \eta_i\,\nabla x_i(y,\xi)+c\,\hat{\eta}=0$ (every component of the latter vector being the inner product of the normal vector $\eta+c\,\hat{\eta}$ with a vector tangent to $\partial D$). Hence, a Neumann boundary condition with respect to $\sum_{j=1}^n \langle \Psi^j,\eta\rangle\,\partial_{y_j}=\iprod{-c\,\hat{\eta},\nabla_y}$ corresponds to a normal reflection of the $y$-components of $Z$ on $\partial D(Z_1(\cdot),\cdot)$ as claimed.}
\end{rmk}

\smallskip

\noindent\textbf{Proof of Theorem \ref{thm:reflect}.} The proof has the same structure as that of Theorem \ref{main1}. Steps 1 and 2 remain the same, and we move on to Step 3. Define the functions $u(t)$, $v(t)$ as in \eqref{whatisu}, \eqref{eq:vtxy} for some $h\in \mcal{D}(\mcal{A}^Z)$. {The representation \eqref{whatisv} for $u(t)$ now takes the form 
\eq
u(t)(y)=\int_{D(y)} \Lambda(y,x)\,v(t)(x,y)\,\mathrm{d}x
\en
where, for every $t\ge0$, $v(t)$ belongs to the domain of the generator ${\mathcal A}^Z$ specified in the theorem, and $\frac{\mathrm{d}}{\mathrm{d}t}\,v(t)={\mathcal A}^Z\,v(t)$, $t\ge0$. By assumption, for each $t$, there exists a sequence $v_l(t) \in C_c^{\infty}(D) \cap \mcal{D}(\mcal{A}^Z)$ such that $v_l(t)$ converges uniformly to $v(t)$ and $\mcal{A}^Zv_l(t)$ converges uniformly to $\mcal{A}^Z v(t)$.  This allows us to compute 
\eq\label{eq:divappl}
\begin{split}
\frac{\mathrm{d}}{\mathrm{d}t}\,u(t) = \int_{D(y)} \Lambda\,\Big(\frac{\mathrm{d}}{\mathrm{d}t}\,v(t)\Big)\,\mathrm{d}x 
= &\lim_{l\to\infty} \, \int_{D(y)} \Lambda\,\big(\mcal{A}^X + \mcal{A}^Y + (\nabla_y V)'\,\nabla_y\big)\,v_l(t)\,\mathrm{d}x \\
= &\lim_{l\to\infty} \bigg(\!\int_{D(y)} \big(({\mathcal A}^Y \Lambda)+\Lambda\,{\mathcal A}^Y + \Lambda\,(\nabla_y V)'\,\nabla_y \big)
\,v_l(t)\,\mathrm{d}x \\
&\quad\;\;+\frac{1}{2}\int_{\partial D(y)} \Lambda\,\iprod{2v_l(t)\,b + \nabla_x v_l(t)-v_l(t)\,\nabla_x V,\eta}\,\mathrm{d}\theta(x)\!\bigg),
\end{split} 
\en
where the second identity reveals that the limit is uniform in $y$, and the third identity has been obtained using the representation \eqref{PDE_mod}.}

\medskip

 Next, we pick a sequence $\Lambda_q$, $q\in\nn$ of $C_c^\infty(D)$ functions such that the convergences $\Lambda_q\to\Lambda$, $\nabla_y\Lambda_q\to\nabla_y\Lambda$, $\nabla_x \Lambda_q \to \nabla_x \Lambda$, and ${\mathcal A}^Y \Lambda_q\to{\mathcal A}^Y \Lambda$ hold uniformly on compact subsets of $D$. Such a sequence can be constructed by first decomposing $\Lambda$ into a finite sum according to a suitable partition of unity on $D$. For elements of the open cover in the interior of $D$, one may convolve the summand with a smooth kernel. For elements of the open cover near the boundary, one may push the points to the interior on a scale $\epsilon$, then convolve with a smoothing kernel on a scale of $\epsilon^2$ similar to \cite[Section 5.3.3, Theorem 3]{Ev}. For every fixed $l,q\in\nn$, one can now use the multidimensional Leibniz rule and the Divergence Theorem to compute
\begin{eqnarray*}
&&\partial_{y_j} \int_{D(y)} \Lambda_q\,v_l(t)\,\mathrm{d}x=\int_{D(y)} \mathrm{div}_x(\Lambda_q\,v_l(t)\,\Psi^j)
+\partial_{y_j}(\Lambda_q\,v_l(t))\,\mathrm{d}x, \\
&&\partial_{y_jy_j}  \int_{D(y)} \Lambda_q\,v_l(t)\,\mathrm{d}x 
= \int_{D(y)} \Big(\mathrm{div}_x(\mathrm{div}_x(\Lambda_q\,v_l(t)\,\Psi^j)\,\Psi^j) 
+ \partial_{y_j} \big(\mathrm{div}_x(\Lambda_q\,v_l(t)\,\Psi^j)\big) \\
&&\quad\quad\quad\quad\quad\quad\quad\quad\quad\quad\quad\quad\quad\;\;
+ \mathrm{div}_x\big(\partial_{y_j}(\Lambda_q\,v_l(t))\,\Psi^j\big)
+\partial_{y_jy_j}(\Lambda_q\,v_l(t))\Big)\,\mathrm{d}x.  
\end{eqnarray*}
Therefore, noting that Itô's formula and \cite[Proposition VII.1.7]{RY}  imply that the functions $\Lambda_q v_l(t)$ and  $\int_{D(y)}\Lambda_q v_l(t)\, \mathrm{d}x$ are in $\mcal{D}(\mcal{A}^Y )$, we have
\begin{eqnarray*}
{\mathcal A}^Y \int_{D(y)} \Lambda_q\,v_l(t)\,\mathrm{d}x
=\int_{D(y)} {\mathcal A}^Y (\Lambda_q\,v_l(t))\,\mathrm{d}x
+\sum_{j=1}^n \int_{D(y)} \gamma_j\,\mathrm{div}_x(\Lambda_q\,v_l(t)\,\Psi^j)\,\mathrm{d}x 
\quad\quad\quad\quad\quad \\
+\frac{1}{2}\,\Big(\mathrm{div}_x(\mathrm{div}_x(\Lambda_q\,v_l(t)\,\Psi^j)\,\Psi^j) 
+ \partial_{y_j} \big(\mathrm{div}_x(\Lambda_q\,v_l(t)\,\Psi^j)\big)
+ \mathrm{div}_x\big(\partial_{y_j}(\Lambda_q\,v_l(t))\,\Psi^j\big)\Big)\,\mathrm{d}x.
\end{eqnarray*}
In view of the Divergence Theorem, the latter expression can be rewritten as 
\eq\label{AYu}
\begin{split}
\int_{D(y)} {\mathcal A}^Y (\Lambda_q\,v_l(t))\,\mathrm{d}x
+\sum_{j=1}^n \int_{\partial D(y)}\gamma_j\,\Lambda_q\,v_l(t)\,\langle \Psi^j,\eta\rangle
+\frac{1}{2}\,\mathrm{div}_x(\Lambda_q\,v_l(t)\,\Psi^j)\langle\Psi^j,\eta\rangle 
\quad\quad\quad\;\;\;\\
+\frac{1}{2}\langle \partial_{y_j}(\Lambda_q\,v_l(t)\,\Psi^j),\eta\rangle
+\frac{1}{2}\partial_{y_j}(\Lambda_q\,v_l(t))\langle \Psi^j,\eta\rangle\,\mathrm{d}\theta(x).
\end{split}
\en

\smallskip

{Note further that ${\mathcal A}^Y (\Lambda_q\,v_l(t))$ is given by the product rule \eqref{prod_rule}, and therefore the expression in \eqref{AYu} converges in the limit $q\to\infty$ uniformly to 
\eq\label{AYu'}
\begin{split}
& \int_{D(y)} ({\mathcal A}^Y \Lambda)\,v_l(t)+(\nabla_y\Lambda)'\,\nabla_y v_l(t)+\Lambda\,({\mathcal A}^Y v_l(t))\,\mathrm{d}x \\
& +\sum_{j=1}^n \int_{\partial D(y)}\Big(\gamma_j\,\Lambda\,v_l(t)\,\langle \Psi^j,\eta\rangle
+\frac{1}{2}\,\mathrm{div}_x(\Lambda\,v_l(t)\,\Psi^j)\langle\Psi^j,\eta\rangle 
+\frac{1}{2}\langle \partial_{y_j}(\Lambda\,v_l(t)\,\Psi^j),\eta\rangle \\
& \qquad\qquad\qquad+\frac{1}{2}\partial_{y_j}(\Lambda\,v_l(t))\langle \Psi^j,\eta\rangle\Big)\,\mathrm{d}\theta(x).
\end{split}
\en
Since the operator ${\mathcal A}^Y$ is closed (\cite[Lemma 17.8]{Ka}), the latter can be further identified as ${\mathcal A}^Y \int_{D(y)} \Lambda\,v_l(t)\,\mathrm{d}x$.} We proceed by using the fact that each $\Psi^j$ is piecewise constant, $\eta=\sum_{j=1}^n\Psi^j\,\langle\Psi^j,\eta\rangle$, \eqref{LambdaNeumann}, and the Neumann boundary condition with respect to the vector field of \eqref{thm3Neumann} satisfied by $v_l(t)$ to simplify the boundary integrand in \eqref{AYu'}. For the terms of the boundary integrand containing $v_l(t)$ we compute  
\[
\begin{split}
& v_l(t)\,\sum_{j=1}^n \Big( \Lambda \gamma_j \iprod{\Psi^j,\eta}  + \frac{1}{2}\iprod{\nabla_x\Lambda, \Psi^j} \iprod{\Psi^j, \eta} 
+ \partial_{y_j} \Lambda \iprod{\Psi^j, \eta}\Big)\\
&= v_l(t)\,\Big( \sum_{j=1}^n \iprod{\Psi^j, \eta}\left( \gamma_j \Lambda  + \partial_{y_j} \Lambda \right) 
+ \frac{1}{2} \iprod{\nabla_x \Lambda, \eta} \Big)
= v_l(t)\,\Lambda \iprod{b , \eta} -\frac{1}{2}\,v_l(t)\,\iprod{\nabla_x \Lambda,\eta},
\end{split}
\]
whereas for the remaining terms of the boundary integrand we get
\[
\sum_{j=1}^n \Big( \frac{1}{2}\,\Lambda \iprod{\nabla_x v_l(t), \Psi^j} \iprod{\Psi^j, \eta} 
+ \Lambda\,\partial_{y_j} v_l(t) \iprod{\Psi^j, \eta} \Big)= \frac{1}{2}\,\Lambda \iprod{\nabla_x v_l(t),\eta}.
\]
{Plugging this into \eqref{AYu'} and comparing the result with \eqref{eq:divappl} we obtain
\[
\frac{\mathrm{d}}{\mathrm{d}t}\,u(t)=\lim_{l\to\infty}\;{\mathcal A}^Y \int_{D(y)} \Lambda\,v_l(t)\,\mathrm{d}x,
\]
where the limit is uniform in $y$ as pointed out after \eqref{eq:divappl}. Another application of the closedness of ${\mathcal A}^Y$ yields $\frac{\mathrm{d}}{\mathrm{d}t}\,u(t)={\mathcal A}^Y u(t)$, completing Step 3.} The arguments in Steps 4 through 7 can be repeated word by word, only replacing the references to Step 3 in the proof of Theorem 2 by those to Step 3 herein.

\medskip

\comment{We proceed to Step 3. {We note first that Step 2 and Proposition II.6.2 in \cite{EN} imply $\ev\big[f(Z_2(t))\,|\,Z_2(0)=y]=(Q_t f)(y)$ for all $y\in\mcal{Y}$ and $f\in C_c^\infty(\mcal{Y})$. Moreover, a straightforward approximation argument allows to extend this identity to all $f$ in the domain of ${\mathcal A}^Y$. On the other hand, an approximation argument as in the previous step and the locality of ${\mathcal A}^Y$ reveal that, for any $g\in C_c^\infty\left(\mcal{X}\right)$, \color{red} the function $\int_{D(y)} \Lambda\,g\,\mathrm{d}x$ belongs to the domain of ${\mathcal A}^Y$,\color{black} and ${\mathcal A}^Y \int_{D(y)} \Lambda\,g\,\mathrm{d}x$ is given by \eqref{AYu'} with $g$ replacing $v_l(t)$. Hence, to prove the desired commutativity relation
\eq\label{thm3comm}
\ev\bigg[\int_{D(Z_2(t))} \Lambda(Z_2(t),x)\,g(x)\,\mathrm{d}x\,\Big|\,Z_2(0)=y\bigg]
=\int_{D(y)} \Lambda(y,x)\,\ev\big[g(Z_1(t))\,|\,Z_1(0)=x\big]\,\mathrm{d}x
\en
for functions $g\in C_c^\infty\left(\mcal{X}\right)$, it suffices to verify that the right-hand side is a solution of $\frac{\mathrm{d}}{\mathrm{d}t} u(t)={\mathcal A}^Y u(t)$ (the left-hand side is such a solution by Theorem 17.6 in \cite{Ka} and the solution is unique by Proposition II.6.2 in \cite{EN}).} 

\medskip

{For the purpose of the latter verification write $v(t)$ for $\ev\big[g(Z_1(t))\,|\,Z_1(0)=x\big]$ and note that $\frac{\mathrm{d}}{\mathrm{d}t}\,v(t)={\mathcal A}^X v(t)$ by Step 1. Picking a sequence $v_l(t)$, $l\in\nn$ of $C_c^\infty(\mcal{X})$ functions in the domain of ${\mathcal A}^X$ converging uniformly to $v(t)$ such that ${\mathcal A}^X v_l(t)\to {\mathcal A}^X v(t)$ uniformly as well and making the same calculation as in \eqref{eq:divappl} one determines the time derivative of the right-hand side of \eqref{thm3comm} as
\eq\label{thm3step3lim}
\lim_{l\to\infty} \bigg(\int_{D(y)} ({\mathcal A}^Y\Lambda)\,v_l(t)\,\mathrm{d}x
+\frac{1}{2}\int_{\partial D(y)} \Lambda\,\iprod{2 v_l(t)\,b + \nabla v_l(t)-v_l(t)\,\nabla_x V,\eta}\,\mathrm{d}\theta(x)\bigg)
\en
where the limit is uniform in $y$. Moreover, proceeding exactly as in the derivation of \eqref{AYu'} one arrives at 
\[
\begin{split}
& \int_{D(y)} ({\mathcal A}^Y \Lambda)\,v_l(t)\,\mathrm{d}x 
+\sum_{j=1}^m \int_{\partial D(y)}\Big(\gamma_j\,\Lambda\,v_l(t)\,\langle \Psi^j,\eta\rangle
+\frac{1}{2}\,\mathrm{div}_x(\Lambda\,v_l(t)\,\Psi^j)\langle\Psi^j,\eta\rangle \\
& \qquad\qquad\qquad\qquad\qquad\qquad\qquad\quad +(\partial_{y_j}\Lambda)\,v_l(t)\langle \Psi^j,\eta\rangle\Big)\,\mathrm{d}\theta(x).
\end{split}
\]
This expression can be identified as ${\mathcal A}^Y \int_{D(y)} \Lambda\,v_l(t)\,\mathrm{d}x$ thanks to the locality and closedness of ${\mathcal A}^Y$ on the one hand and simplified to the expression inside the limit in \eqref{thm3step3lim} using that the $\Psi^j$ are piecewise constant, $\eta=\sum_{j=1}^n\Psi^j\,\langle\Psi^j,\eta\rangle$, and \eqref{LambdaNeumann} on the other hand. All in all,
\[
\frac{\mathrm{d}}{\mathrm{d}t} \int_{D(y)} \Lambda\,v(t)\,\mathrm{d}x
=\lim_{l\to\infty} \; {\mathcal A}^Y \int_{D(y)} \Lambda\,v_l(t)\,\mathrm{d}x
= {\mathcal A}^Y \int_{D(y)} \Lambda\,v(t)\,\mathrm{d}x
\]
thanks to the closedness of ${\mathcal A}^Y$. This finishes Step 3.} Moreover, the arguments in Steps 4 through 6 can be repeated word by word, only replacing the references to Step 2 in the proof of Theorem \ref{main1} by references to Step 2 in the current proof. }

\noindent\textbf{Step 8.} We now turn to the proof of condition \eqref{eq:condlawinterior}. Fix $(x_0,y_0)$ in the interior of $D$. We introduce two compact sets $K_{x_0}, K_{y_0}$ with nonempty interior,  $x_0 \in K_{x_0}$, $y_0 \in K_{y_0}$, and  $K_{x_0} \times K_{y_0} \subseteq \overset{\circ}{D}$. Fix a function $h \in C_c^{\infty}(D)$ satisfying the boundary conditions introduced in the statement of the theorem. As in Step 1 of the proof of Theorem 2 and using the same notation, we may restrict the integral over the $x_1$ variable in $\tilde{R}_t h$ to $K_{x_0}$. 

\smallskip
First, note that $\Lambda h = \tilde{\Lambda} h$, and so
\[
Q_t\big(\Lambda(\cdot,x_1)h(x_1,\cdot)\big)(y_0) = \Lambda(y_0,x_1)h(x_1,y_0) + t \mcal{A}^Y\big(\Lambda(\cdot,x_1)h(x_1,\cdot)\big)(y_0) + t \epsilon_1(x_1,y_0,t)
\]
where $\epsilon_1(x_1,y_0,t)$ is $o(1)$ uniformly in $x_1 \in K_{x_0}$ due to the uniform continuity and boundedness of $\mcal{A}^Y\big(\Lambda h\big)$.
Introduce an open neighborhood $U$ of $y_0$ compactly contained in $K_{y_0}$. Let $\phi$ be a smooth function from $\mcal{Y}$ to $[0,1]$ that is $1$ inside $U$ and $0$ outside $K_{y_0}$. Now, since $\tilde{\Lambda}$ is an extension of $\Lambda$, Hölder's inequality implies that
\begin{equation}\label{probest}
\begin{split}
\big|\big(Q_t\tilde{\Lambda}(\cdot,x_1)\big)(y_0) - \big(Q_t\Lambda(\cdot,x_1)\big)(y_0)\big| &\leq \mathbb{E}_{y_0}\big[\tilde{\Lambda}(Y(t),x_1)(1-\phi(Y(t))\big]\\
&\leq Ct^{-\frac{M}{p}} \mathbb{P}_{y_0}(Y(t) \not \in U)^{\frac{1}{q}},
\end{split}
\end{equation}
where $C, M$, and $p$ come from Assumption 3(iv) and $q^{-1}=1-p^{-1}$. Due to the local boundedness of the drift of $Y$, the latter probability decays exponentially in $\frac{1}{t}$ as $t \downarrow 0$. This ensures that the right-hand side of (\ref{probest}) is $o(t)$ uniformly over $x_1 \in K_{x_0}$. Likewise,
\begin{equation}\label{est2}
\big(Q_t\tilde{\Lambda}(\cdot,x_1)\big)(y_0) = \big(Q_t\tilde{\Lambda}(\cdot,x_1)\phi(\cdot)\big)(y_0) + o(t),
\end{equation}
where, again, the $o(t)$ is uniform over $x_1 \in K_{x_0}$. Now, $\tilde{\Lambda}(\cdot,x_1)\phi(\cdot)$ is a uniformly bounded, $C^2$-function with compact support, and so
\begin{equation}\label{est3}
Q_t\big(\tilde{\Lambda}(\cdot,x_1)\phi(\cdot)\big)(y_0) = \Lambda(y_0,x_1) + t \mcal{A}^Y\Lambda(\cdot,x_1)(y_0) + o(t),
\end{equation}
with $o(t)$ uniform over $x_1 \in K_{x_0}$. Putting equations (\ref{probest}), (\ref{est2}), and (\ref{est3}) together, we find that 
\[
\big(Q_t\Lambda(\cdot,x_1)\big)(y_0) = \Lambda(y_0,x_1) + t \mcal{A}^Y\Lambda(\cdot,x_1)(y_0) + o(t).
\]
The rest of the proof is exactly the same as Step 1 in the proof of Theorem 2.
\color{black}
\ep

\medskip

\comment{

{As in Step 7 in the proof of Theorem \ref{main1}, it remains to show that the transition operators of $Z$ are given by  
\eq\label{thm3Z_transition}
\int_{\mcal{X}} P_t(x_0,\mathrm{d}x_1)\,\frac{\int_{D(x_1,\cdot)} Q_t(y_0,\mathrm{d}y_1)\,\Lambda(y_1,x_1)\,f(x_1,y_1)}
{\int_{D(x_1,\cdot)} Q_t(y_0,\mathrm{d}y_1)\,\Lambda(y_1,x_1)},\quad t>0
\en
on $C_0(D)$ functions $f$. One can see as there that the transition operators of \eqref{thm3Z_transition} satisfy the Chapman-Kolmogorov equation, thereby defining a Markov process $\tilde{Z}$, and that the $x$-components ($y$-components resp.) of $\tilde{Z}$ evolve according to the semigroup $(P_t)$ ($(Q_t)$ resp.), so that, in particular, $\tilde{Z}$ has continuous paths. Therefore it suffices to prove that $\tilde{Z}$ has the same law as $Z$. Our strategy of establishing this is to check that $\tilde{Z}$ solves the submartingale problem associated with $Z$. To this end, we fix a $C_c^2(D)$ function $f$ such that the intersection of its support with $\partial D$ is smooth, and the derivatives of $f$ with respect to the vector fields defining the boundary conditions of $Z$ are nonnegative there. Moreover, we let $K$ be a compact subset of $D$ containing the support of $f$ and such that the boundary of $K$ in $D$ is of positive distance from the support of $f$ if such a set exists and let $K$ be the support of $f$ otherwise (that is, if the support of $f$ is all of $D$). Note that there exists an $\epsilon>0$ such that every point in $D$ is either in $K$ and of distance more than $\epsilon$ from the boundary of $D$ in $\mcal{X}\times\mcal{Y}$ (``case 1''), or in $K$ and the ball of radius $\epsilon$ around the point is contained in $U_\partial$ (see Assumption \ref{Asmp3} (ii) for the definition of the latter) and does not intersect $(\mcal{X}\times\partial\mcal{Y})\cup(\partial\mcal{X}\times\mcal{Y})$ (``case 2''), or not in $K$ (``case 3''). We proceed by studying the $t\downarrow0$ asymptotics of the fraction in \eqref{thm3Z_transition} in these three cases.} 

\medskip
 
\noindent{\textbf{Case 1.} Note first that the probability of the process $Y$ exiting the ball of radius $\epsilon$ around $y_0$ decays exponentially as $t\downarrow0$. Indeed, if $y_0$ is of distance of more than $\epsilon$ from $\partial\mcal{Y}$, then this is a direct consequence of the boundedness of the drift and diffusion coefficients of $Y$ on such a ball; and if $y_0$ is of distance less than $\epsilon$ from $\partial\mcal{Y}$, then one can apply a diffeomorphism to the $\epsilon$-neighborhood of $y_0$ under which the boundary vector field $U_2$ becomes constant and subsequently use the local boundedness of the drift and diffusion coefficients of the process resulting from $Y$ under such a diffeomorphism. It now follows from the boundedness of $\Lambda f$ that the $t\downarrow0$ asymptotics of $\int_{D(x_1,\cdot)} Q_t(y_0,\mathrm{d}y_1)\,\Lambda(y_1,x_1)\,f(x_1,y_1)$ is captured up to an exponentially small error if one replaces $Q_t$ by the semigroup of the process $Y$ stopped upon the exit time $\tau_\epsilon$ from the ball of radius $\epsilon$ around $y_0$. Approximating $\Lambda(\cdot,x_1)$ by a sequence of $C^\infty_c(D(x_1,\cdot))$ functions $\Lambda_q(\cdot,x_1)$ such that the convergences $\Lambda_q(\cdot,x_1)\to\Lambda(\cdot,x_1)$, $\nabla_y \Lambda_q(\cdot,x_1)\to\nabla_y\Lambda(\cdot,x_1)$, and ${\mathcal A}^Y \Lambda_q(\cdot,x_1)\to{\mathcal A}^Y \Lambda(\cdot,x_1)$ hold uniformly on the ball in consideration, applying It\^o's formula to $\Lambda_q(Y(t\wedge\tau_\epsilon),x_1)\,f(x_1,Y(t\wedge\tau_\epsilon))$, taking the expectation, and passing to the limit $q\to\infty$ we obtain  
\[
\begin{split}
& \ev\big[\Lambda(Y(t\wedge\tau_\epsilon),x_1)\,f(x_1,Y(t\wedge\tau_\epsilon))\big]
\\
& \ge \Lambda(y_0,x_1)\,f(x_1,y_0) + \ev\bigg[\int_0^{t\wedge\tau_\epsilon} 
\big(({\mathcal A}^Y\Lambda) f+(\nabla_y \Lambda)'\,\nabla_y f+\Lambda\,{\mathcal A}^Y f\big)(Y(s),x_1)\,\mathrm{d}s\bigg].
\end{split}
\]
Here the inequality results from dropping the boundary terms and recalling the boundary conditions satisfied by $\Lambda$ and $f$ on $\mcal{X}\times\partial\mcal{Y}$. At this point, we can rely on the continuity of the function $({\mathcal A}^Y\Lambda) f+(\nabla_y \Lambda)'\,\nabla_y f+\Lambda\,{\mathcal A}^Y f$ on the ball in consideration and the exponentially small probability of the event $\{\tau_\epsilon<t\}$ to conclude 
\[
\begin{split}
& \int_{D(x_1,\cdot)} Q_t(y_0,\mathrm{d}y_1)\,\Lambda(y_1,x_1)\,f(x_1,y_1) \\
& \ge(\Lambda f)(y_0,x_1)+t\,\big(({\mathcal A}^Y\Lambda) f+(\nabla_y \Lambda)'\,\nabla_y f+\Lambda\,{\mathcal A}^Y f\big)(y_0,x_1)+o(t),
\quad t\downarrow0.
\end{split}
\]
Moreover, the error term is uniform in the point $(x_1,y_0)$, since all estimates did not depend on the latter.} 

\medskip

{Next, extend $\Lambda$ to $\mcal{X}\times\mcal{Y}$ by setting it to zero outside of $D$, and pick functions $\underline{\Lambda}(\cdot,x_1)\le\Lambda(\cdot,x_1)\le\overline{\Lambda}(\cdot,x_1)$ on $\mcal{Y}$ in the domain of ${\mathcal A}^Y$ in $C_0(\mcal{Y})$ coinciding with $\Lambda(\cdot,x_1)$ on a ball of radius $\frac{\epsilon}{2}$ around $y_0$ and such that ${\mathcal A}^Y \underline{\Lambda}(\cdot,x_1)$, ${\mathcal A}^Y \overline{\Lambda}(\cdot,x_1)$ are equicontinuous on that ball and uniformly bounded on $\mcal{Y}$ as $(x_1,y_0)$ varies. Such a choice is possible due to the continuity and boundedness assumptions on ${\mathcal A}^Y\Lambda$. Invoking the Kolmogorov forward equation one further computes 
\[
\begin{split}
& \underline{\Lambda}(y_0,x_1)+t\,({\mathcal A}^Y \underline{\Lambda})(y_0,x_1)
+\int_0^t \Big(Q_s\big({\mathcal A}^Y \underline{\Lambda}(\cdot,x_1)\big)(y_0)-{\mathcal A}^Y \underline{\Lambda}(y_0,x_1)\Big)\,\mathrm{d}s \\
& =\int_{D(x_1,\cdot)} \!\!Q_t(y_0,\mathrm{d}y_1)\,\underline{\Lambda}(y_1,x_1) 
\le\int_{D(x_1,\cdot)} \!\!Q_t(y_0,\mathrm{d}y_1)\,\Lambda(y_1,x_1)\le \int_{D(x_1,\cdot)} \!\!Q_t(y_0,\mathrm{d}y_1)\,\overline{\Lambda}(y_1,x_1) \\
&\qquad\qquad\quad\;\;
=\overline{\Lambda}(y_0,x_1)+t\,({\mathcal A}^Y \overline{\Lambda})(y_0,x_1)
+\int_0^t \Big(Q_s\big({\mathcal A}^Y \overline{\Lambda}(\cdot,x_1)\big)(y_0)-{\mathcal A}^Y \overline{\Lambda}(y_0,x_1)\Big)\,\mathrm{d}s. 
\end{split}
\]
Thanks to the properties of $\underline{\Lambda}$, $\overline{\Lambda}$ this yields the asymptotics 
\[
\int_{D(x_1,\cdot)} Q_t(y_0,\mathrm{d}y_1)\,\Lambda(y_1,x_1)=\Lambda(y_0,x_1)+t\,{\mathcal A}^Y\Lambda(y_0,x_1)+o(t),
\quad t\downarrow0
\]
with an error term uniform in $(x_1,y_0)$. Inserting the asymptotic lower bound on the numerator and the asymptotics of the denominator into the fraction in \eqref{thm3Z_transition} and expanding the result as in Step 1 in the proof of Theorem \ref{main2} one ends up with
\[
\frac{\int_{D(x_1,\cdot)} Q_t(y_0,\mathrm{d}y_1)\,\Lambda(y_1,x_1)\,f(x_1,y_1)}{\int_{D(x_1,\cdot)} Q_t(y_0,\mathrm{d}y_1)\,\Lambda(y_1,x_1)}
\ge f(x_1,y_0)+t\,({\mathcal A}^Y f+(\nabla_y V)'\,\nabla_y f)(x_1,y_0)+o(t).
\]
The error term is uniform in $(x_1,y_0)$ due to the uniformity of the preceding error terms and the uniform positivity of $\Lambda$ on $K$.} 

\medskip

\noindent{\textbf{Case 2.} As in case 1, one can replace $\int_{D(x_1,\cdot)} Q_t(y_0,\mathrm{d}y_1)\,\Lambda(y_1,x_1)\,f(x_1,y_1)$ by the expectation of $\Lambda(Y(t\wedge\tau_\epsilon),x_1)\,f(x_1,Y(t\wedge\tau_\epsilon))$ (where $\tau_\epsilon$ is the first exit time of $Y$ from the ball of radius $\epsilon$ around $y_0$), making only a uniformly exponentially small error as $t\downarrow0$. Moreover, on the intersection of that ball with $D(x_1,\cdot)$ one has the Taylor expansion
\[
\begin{split}
(\Lambda f)(y_1,x_1)= & (\Lambda f)(y_0,x_1)+(\nabla_y(\Lambda f)(y_0,x_1))'\,(y_1-y_0) \\
& +\frac{1}{2}\,(y_1-y_0)'\,\mathrm{Hess}_y(\Lambda f)(y_0,x_1)\,(y_1-y_0)+o(|y_1-y_0|^2)
\end{split}
\]
where $\mathrm{Hess}_y$ stands for the Hessian with respect to $y$, and the error term can be controlled uniformly by the modulus of continuity of $\mathrm{Hess}_y(\Lambda f)$ on $K\cap U_\partial$. Inserting this into the expectation and rearranging terms one ends up with the expectation of
\eq\label{Gaussian}
\begin{split}
& \mathbf{1}_{\{Y(t\wedge\tau_\epsilon)\in D(x_1,\cdot)\}}\Big(f(x_1,y_0)\Big(\Lambda(y_0,x_1)+\nabla_y\Lambda(y_0,x_1)'(Y(t\wedge\tau_\epsilon)-y_0) \\ 
& +\frac{1}{2}\,(Y(t\wedge\tau_\epsilon)-y_0)'
(\mathrm{Hess}_y\Lambda)(y_0,x_1)(Y(t\wedge\tau_\epsilon)-y_0)\Big) 
+(\Lambda\,\nabla_y f)(y_0,x_1)'(Y(t\wedge\tau_\epsilon)-y_0) \\
& +\frac{1}{2}\,(Y(t\wedge\tau_\epsilon)-y_0)'
\big(2\,(\nabla_y\Lambda)(\nabla_y f)'+\Lambda\,\mathrm{Hess}_y f\big)(y_0,x_1)(Y(t\wedge\tau_\epsilon)-y_0)\Big) \\
& +o\big(|Y(t\wedge\tau_\epsilon)-y_0|^2\big).
\end{split}
\en} 

{To study $\int_{D(x_1,\cdot)} Q_t(y_0,\mathrm{d}y_1)\,\Lambda(y_1,x_1)$ we let $h$ be a $[0,1]$-valued $C_c^\infty(\mcal{Y})$ function constantly equal to $1$ on an open neighborhood of the boundary of $D(x_1,\cdot)$ in $\mcal{Y}$ that contains $y_0$ and constantly equal to $0$ on $\{y_1\in D(x_1,\cdot):(x_1,y_1)\notin U_\partial\}$. Then the Kolmogorov forward equation shows
\[
\int_{D(x_1,\cdot)} Q_t(y_0,\mathrm{d}y_1)\,\Lambda(y_1,x_1)\,(1-h(y_1))
=\int_0^t Q_s\big({\mathcal A}^Y(\Lambda(\cdot,x_1)\,(1-h))\big)(y_0)\,\mathrm{d}s=o(t),\;t\downarrow0.
\]
The uniform in $(x_1,y_0)$ order $o(t)$ of the latter term is a consequence of a product rule as in \eqref{prod_rule}, the boundedness assumption on ${\mathcal A}^Y \Lambda$, and the continuity assumptions on ${\mathcal A}^Y\Lambda$, $\nabla_y\Lambda$, and $\Lambda$. Moreover, we analyze the $t\downarrow0$ asymptotics of $\int_{D(x_1,\cdot)} Q_t(y_0,\mathrm{d}y_1)\,\Lambda(y_1,x_1)\,h(y_1)$ as for $\int_{D(x_1,\cdot)} Q_t(y_0,\mathrm{d}y_1)\,\Lambda(y_1,x_1)\,f(x_1,y_1)$ and end up with the expectation of a random variable as in \eqref{Gaussian}, with $f(x_1,y_0)$ replaced by $h(y_0)=1$, $(\nabla_y\,f)(x_1,y_0)$ by $(\nabla\,h)(y_0)=0$, and $(\mathrm{Hess}_y\,f)(x_1,y_0)$ by $(\mathrm{Hess}\,h)(y_0)=0$.}

\medskip

{Next, we put the asymptotics of this case together introducing the Brownian motion $W(t\wedge\tau_\epsilon):=Y(t\wedge\tau_\epsilon)-y_0-\int_0^{t\wedge\tau_\epsilon} \gamma(Y(s))\,\mathrm{d}s$, $t\ge0$ and noting
\eq\label{bdry_arg}
(\Lambda\,\nabla_y\,f)(x_1,y_0)\,\ev\big[W(t\wedge\tau_\epsilon)\,\mathbf{1}_{\{y_0+\int_0^{t\wedge\tau_\epsilon} \gamma(Y(s))\,\mathrm{d}s+W(t\wedge\tau_\epsilon)\in D(x_1,\cdot)\}}\big]\ge -O(t),
\quad t\downarrow0
\en
in view of the boundary condition on $f$ along the boundary of $D$ in $\mcal{X}\times\mcal{Y}$ and the Lipschitz property of $(\Lambda\,\nabla_y\,f)(x_1,\cdot)$ on $D(x_1,\cdot)$. It follows that the fraction in \eqref{thm3Z_transition} admits the asymptotic lower bound 
\[
\begin{split}
& f(x_1,y_0)+\frac{(\Lambda\,\nabla_y\,f)(x_1,y_0)\,\ev\big[\int_0^{t\wedge\tau_\epsilon} \gamma(Y(s))\,\mathrm{d}s
\,\mathbf{1}_{\{y_0+\int_0^{t\wedge\tau_\epsilon} \gamma(Y(s))\,\mathrm{d}s+W(t\wedge\tau_\epsilon)\in D(x_1,\cdot)\}}\big]}
{\Lambda(y_0,x_1)\,\pp\big(y_0+\int_0^{t\wedge\tau_\epsilon} \gamma(Y(s))\,\mathrm{d}s+W(t\wedge\tau_\epsilon)\in D(x_1,\cdot)\big)} \\
& + \frac{(\Lambda\,\nabla_y\,f)(x_1,y_0)\,\ev\big[W(t\wedge\tau_\epsilon)\,\mathbf{1}_{\{y_0+\int_0^{t\wedge\tau_\epsilon} \gamma(Y(s))\,\mathrm{d}s+W(t\wedge\tau_\epsilon)\in D(x_1,\cdot)\}}\big]}
{\Lambda(y_0,x_1)\,\pp\big(y_0+\int_0^{t\wedge\tau_\epsilon} \gamma(Y(s))\,\mathrm{d}s+W(t\wedge\tau_\epsilon)\in D(x_1,\cdot)\big)} \\
& + \frac{\frac{1}{2}\,\ev\big[W(t\wedge\tau_\epsilon)'\,
(2\,(\nabla_y\Lambda)(\nabla_y f)'+\Lambda\,(\mathrm{Hess}_y f))(y_0,x_1)\,W(t\wedge\tau_\epsilon)\,\mathbf{1}_{\{Y(t\wedge\tau_\epsilon)\in D(x_1,\cdot)\}}\big]}{\Lambda(y_0,x_1)\,\pp\big(y_0+\int_0^{t\wedge\tau_\epsilon} \gamma(Y(s))\,\mathrm{d}s+W(t\wedge\tau_\epsilon)\in D(x_1,\cdot)\big)}+o(t)
\end{split}
\]
as $t\downarrow0$. At this point, $\int_0^{t\wedge\tau_\epsilon} \gamma(Y(s))\,\mathrm{d}s=\gamma(y_0)\,(t\wedge\tau_\epsilon)+o(t)$, $t\downarrow0$, the exponentially small probability of the event $\{\tau_\epsilon<t\}$, and a straightforward computation with Gaussian distributions show that the asymptotic lower bound is of the desired form
\[
f(x_1,y_0)+t\,({\mathcal A}^Y f+(\nabla_y V)'\,(\nabla_y f))(x_1,y_0)+o(t)
\]
with an error term uniform in $(x_1,y_0)$ as long as the distance $\mathrm{dist}(y_0,\partial D(x_1,\cdot))$ from $y_0$ to $\partial D(x_1,\cdot)$ is not on the order of $\sqrt{t}$. More precisely, for each $\delta>0$, there exist constants $0<c_1(\delta),c_2(\delta),c_3(\delta)<\infty$ such that the asymptotic lower bound
\eq\label{case2LBD}
f(x_1,y_0)+t({\mathcal A}^Y f+(\nabla_y V)'\,(\nabla_y f))(x_1,y_0)
-t\big(\delta+c_1(\delta)\mathbf{1}_{\{\mathrm{dist}(y_0,\partial D(x_1,\cdot))\in[c_2(\delta)\sqrt{t},c_3(\delta)\sqrt{t}]\}}\big)+o(t)
\en
applies with an error term uniform in $(x_1,y_0)$.} 

\medskip

\noindent{\textbf{Case 3.} Lastly, consider the fraction in \eqref{thm3Z_transition} in the case that $(x_1,y_0)\notin K$. View the integrals in the numerator and the denominator as expectations over the process $Y$ and estimate the fraction in absolute value from above by replacing $f$ with $|f|$ and restricting both expectations to the event that $Y$ reaches the boundary of $\{y:\,(x_1,y)\in K\}$ in $D(x_1,\cdot)$ by time $t$ (note that this does not change the expectation in the numerator). Dividing the numerator and the denominator by the probability of the latter event one can further rewrite the expectations as conditional expectations given that event. As $t\downarrow0$, the numerator in the resulting fraction becomes exponentially small uniformly in $(x_1,y_0)\notin K$, since $Y$ has an exponentially small probability of making it from the boundary of $\{y:\,(x_1,y)\in K\}$ to $\{y:\,(x_1,y)\in\mathrm{supp}\,f\}$ in time less than $t$ (note that the drift of $Y$ is bounded on the compact $\{y:\,(x_1,y)\in K\}$) and $\Lambda f$ is bounded. On the other hand, the denominator is bounded away from zero uniformly in $(x_1,y_0)\notin K$ as $t\downarrow0$ thanks to the uniform positivity of $\Lambda$ on the compact $K$. Therefore, in this case, the fraction in \eqref{thm3Z_transition} is $o(t)$ uniformly in $(x_1,y_0)\notin K$.} 

\medskip

{Combining the results in the three cases we conclude that, for each $\delta>0$, one has the asymptotic lower bound
\[
\begin{split}
& \int_{\mcal{X}} P_t(x_0,\mathrm{d}x_1)\,\Big(f(x_1,y_0)+t\,\big({\mathcal A}^Y f+(\nabla_y V)'\,\nabla_y f\big)(x_1,y_0) \\
& \qquad\qquad\quad\quad\quad -t\,c_1(\delta)\,\mathbf{1}_{\{\mathrm{dist}(y_0,\partial D(x_1,\cdot))\in[c_2(\delta)\sqrt{t},c_3(\delta)\sqrt{t}],(x_1,y_0)\in K\}}\Big)-t\,\delta+o(t),\quad t\downarrow0
\end{split}
\]
on the quantities of \eqref{thm3Z_transition} where the error term is uniform in $(x_0,y_0)\in D$ and $0<c_1(\delta),c_2(\delta),c_3(\delta)<\infty$ are suitable constants. Moreover, the same procedure as in Step 1 in the proof of Theorem \ref{main2} allows to simplify the asymptotic lower bound further to
\eq\label{asymp_LBD}
\begin{split}
& f(x_0,y_0)+t\,\big({\mathcal A}^X f + {\mathcal A}^Y f + (\nabla_y V)'\,\nabla_y f\big)(x_0,y_0) \\
& -t\,c_1(\delta)\,\pp\big(\mathrm{dist}(y_0,\partial D(\tilde{Z}_1(t),\cdot))
\in[c_2(\delta)\sqrt{t},c_3(\delta)\sqrt{t}],(\tilde{Z}_1(t),y_0)\in K\big)
-t\,\delta+o(t) ,\quad t\downarrow0.
\end{split}
\en
Here the error term is again uniform in $(x_0,y_0)\in D$, since ${\mathcal A}^X f$ and ${\mathcal A}^Y f+(\nabla_y V)'\,\nabla_y f$ are bounded and uniformly continuous, and the probability of the process $X$ moving by more than a fixed amount within $\{x:\,(x,y_0)\in K\}$ in time less than $t$ becomes exponentially small as $t\downarrow0$.} 

\medskip

{We are now ready to show that $f(\tilde{Z}(t))-\int_0^t ({\mathcal A}^Z f)(\tilde{Z}(s))\,\mathrm{d}s$, $t\ge0$ is a submartingale in the filtration generated by $\tilde{Z}$ and to thereby finish the proof of the theorem. Since $\tilde{Z}$ is a time-homogeneous Markov process by definition, it is enough to prove 
\eq\label{submart}
\ev\bigg[f(\tilde{Z}(t))-\int_0^t ({\mathcal A}^Z f)(\tilde{Z}(s))\,\mathrm{d}s\bigg]\ge f(\tilde{Z}(0))
\en
for every $t>0$. To this end, for every $t>0$ and $\iota\in(0,t)$, we set $t^\iota_l=(l\iota)\wedge t$, $l=0,1,\ldots,L(\iota)$ where $L(\iota)$ is the smallest integer greater or equal to $\frac{t}{\iota}$. With these notations we can write
\eq\label{tele}
\ev\big[f(\tilde{Z}(t))\big]-f(\tilde{Z}(0))
=\sum_{l=1}^{L(\iota)} \ev\big[\ev\big[f(\tilde{Z}(t^\iota_l))-f(\tilde{Z}(t^\iota_{l-1}))\,\big|\,\tilde{Z}(t^\iota_{l-1})\big]\big]
\en
where each summand admits the asymptotic lower bound 
\[
\begin{split}
\iota\ev\big[({\mathcal A}^Z f)(\tilde{Z}(t^\iota_{l-1}))\big]-\iota c_1(\delta)\pp\Big(\mathrm{dist}(\tilde{Z}_2(t^\iota_{l-1}),
\partial D(\tilde{Z}_1(t^\iota_l),\cdot))
\in[c_2(\delta)\sqrt{\iota},c_3(\delta)\sqrt{\iota}], 
\quad\quad\quad\quad\quad \\
(\tilde{Z}_1(t^\iota_l),\tilde{Z}_2(t^\iota_{l-1}))\in K\Big)
-\iota\,\delta+o(\iota)
\end{split}
\]
as $\iota\downarrow0$ by \eqref{asymp_LBD} (recall the time-homogeneity of the Markov process $\tilde{Z}$ and the uniformity of the error term in \eqref{asymp_LBD}). Since $\tilde{Z}_2$ has the law of $Y$ and therefore increments on the order of $\sqrt{\iota}$, we can bound this further by
\[
\begin{split}
\iota\ev\big[({\mathcal A}^Z f)(\tilde{Z}(t^\iota_{l-1}))\big]-\iota c_1(\delta)\pp\Big(\mathrm{dist}(\tilde{Z}_2(t^\iota_l),
\partial D(\tilde{Z}_1(t^\iota_l),\cdot))\le c_4(\delta)\sqrt{\iota}, 
(\tilde{Z}_1(t^\iota_l),\tilde{Z}_2(t^\iota_l))\in K^{\delta,\iota}\Big) \\
-2\iota\delta+o(\iota)
\end{split}
\]
where $0<c_4(\delta)<\infty$ is a suitable constant and $K^{\delta,\iota}$ is the $c_4(\delta)\sqrt{\iota}$-neighborhood of $K$ in $D$. Conditioning on $\tilde{Z}_2(t^\iota_l)$, parametrizing $\partial D$ locally as the graph $(x(y),y)'$ of a smooth function $x(y)$, and using the boundedness of $\Lambda$ on $K^{\delta,\iota}$ we see that the latter probabilities are all not greater than $c_5(\delta)\sqrt{\iota}$ for some $0<c_5(\delta)<\infty$. Thus, from \eqref{tele} we obtain   
\[
\begin{split}
\ev\big[f(\tilde{Z}(t))\big]\!-\!f(\tilde{Z}(0))
\!\ge\!\lim_{\iota\downarrow0} \sum_{l=1}^{L(\iota)} \iota
\ev[({\mathcal A}^Z f)(\tilde{Z}(t^\iota_{l-1}))]\!-\!2\delta t
\!=\! \ev\bigg[\lim_{\iota\downarrow0} \sum_{l=1}^{L(\iota)} \iota
({\mathcal A}^Z f)(\tilde{Z}(t^\iota_{l-1}))\bigg]\!-\!2\delta t \\
= \ev\bigg[\int_0^t ({\mathcal A}^Z f)(\tilde{Z}(s))\,\mathrm{d}s\bigg]\!-\!2\delta t.
\end{split}
\]
Here the first equality is a consequence of the boundedness of ${\mathcal A}^Z f$ and the Dominated Convergence Theorem, the second equality follows from the Riemann integrability of the bounded continuous function $s\mapsto ({\mathcal A}^Z f)(\tilde{Z}(s))$, and the existence of the two limits can be justified by following the computation in the reversed direction. The desired inequality \eqref{submart} now follows by taking the limit $\delta\downarrow0$. \ep}
}

\medskip

In Theorem \ref{main1} we impose that $\Lambda(y,\cdot)$ is a probability density for each $y$. Suppose $\Lambda$ is a solution of \eqref{PDE} in the sense specified in Theorem \ref{main1} with $\Lambda(y,\cdot)$ being the density of a finite positive measure with total mass $\tau(y)$. Then, we can define the normalized density according to 
\eq\label{eq:normkernel}
\xi(y,x)=\frac{\Lambda(y,x)}{\tau(y)}.
\en
Let $\Xi$ denote the Markov transition operator corresponding to $\xi$. Our next theorem shows that $\Xi$ intertwines the semigroup $(P_t,\;t\ge 0)$ with a Doob's $h$-transform of the semigroup $(Q_t,\; t\ge 0)$. 

\begin{thm}\label{thm:htrans}
Consider the setup of the preceding paragraph {and suppose that the total variation norm of $({\mathcal A}^X)^*\Lambda(y,\cdot)$ is locally bounded as $y$ varies, and that the function $\tau$ is continuous.} Then $\tau$ is a harmonic function for $\mcal{A}^Y$, that is, $\tau(Y(t))$, $t\geq0$ is a positive local martingale for the diffusion $Y$ of Assumption \ref{main_asmp}. 

{Define the stopping times $\upsilon_R$, $R>0$, as the first exit times of $Y$ from balls of radius $R$ around $y_0:=Y(0)$ and suppose that the process $Y^\tau$ resulting from $Y$ by changes of measure with densities $\frac{\tau(Y(\upsilon_R))}{\tau(y_0)}$, $R=1,2,\ldots$ on ${\mathcal F}^Y_{\upsilon_R}$, $R=1,2,\ldots$, respectively, does not explode. Then $Y^\tau$ is a Feller-Markov process whose generator reads   
\eq\label{htrans_gen}
\mcal{A}^\tau\,\phi=\tau^{-1} \mcal{A}^Y\left( \tau \phi \right)
\en
for functions $\phi$ with $\tau\phi$ in the domain of $\mcal{A}^Y$, and whose semigroup $(Q^\tau_t)$ satisfies $Q^\tau \iprod{\Xi} P$.}
\end{thm}

\noindent\textbf{Proof.} {To see that $\tau$ is harmonic it suffices to show that $\tau(Y(t\wedge\upsilon_R))$, $t\ge0$ is a martingale for every $R=1,2,\ldots$. We only prove 
\eq\label{tau_mart}
\ev\big[\tau(Y(t\wedge\upsilon_R))\big]=\tau(y_0),\quad t\ge0,
\en  
since then the martingale property of $\tau(Y(t\wedge\upsilon_R))$, $t\ge0$ can be obtained by the same argument in view of the Markov property of $Y$. To establish \eqref{tau_mart} we let $f_l$, $l\in\nn$ be a sequence of nonnegative $C_0(\mcal{X})$ functions increasing to the function constantly equal to $1$ on $\mcal{X}$ and set $g_l=\int_0^1 P_s f_l\,\mathrm{d}s$, $l\in\nn$. Then it easy to check (see, e.g., the proof of Lemma II.1.3 (iii), (iv) in \cite{EN}) that each function $g_l$ is in the domain of ${\mathcal A}^X$ and ${\mathcal A}^X g_l=P_1 f_l-f_l$. Now, \eqref{tau_mart} can be obtained by the following computation:
\[
\begin{split}
& \ev\big[\tau(Y(t\wedge\upsilon_R))\big]-\tau(y_0)
=\int_{\mcal X} \ev\big[\Lambda(Y(t\wedge\upsilon_R),x)\big]-\Lambda(y_0,x)\,\mathrm{d}x \\
& =\lim_{l\to\infty} \int_{\mcal X} \ev\big[\Lambda(Y(t\wedge\upsilon_R),x)\big]\,g_l(x)-\Lambda(y_0,x)\,g_l(x)\,\mathrm{d}x \\
& =\lim_{l\to\infty} \int_{\mcal X} \ev\bigg[\int_0^{t\wedge\upsilon_R}
{\mathcal A}^Y\Lambda(Y(s),x)\,\mathrm{d}s\bigg]\,g_l(x)\,\mathrm{d}x \\
& =\lim_{l\to\infty}  \ev\bigg[\int_0^{t\wedge\upsilon_R} \int_{\mcal X}
{\mathcal A}^Y\Lambda(Y(s),x)\,g_l(x)\,\mathrm{d}x\,\mathrm{d}s\bigg] \\
& =\lim_{l\to\infty}  \ev\bigg[\int_0^{t\wedge\upsilon_R} \int_{\mcal X}
\Lambda(Y(s),x)\,(P_1 f_l-f_l)(x)\,\mathrm{d}x\,\mathrm{d}s\bigg]=0.
\end{split}
\]
Here the first identity follows from Fubini's Theorem with nonnegative integrands; the second identity is a consequence of the Monotone Convergence Theorem; the third identity results from Dynkin's formula (see, e.g., Lemma 17.21 in \cite{Ka}); the fourth identity follows from Fubini's Theorem upon recalling \eqref{PDE} and the assumed local boundedness of the total variation norm of $({\mathcal A}^X)^*\Lambda(y,\cdot)$; the fifth identity is a direct consequence of \eqref{PDE} and the defining property of $({\mathcal A}^X)^*$; and the last identity is due to the pointwise convergence $P_1 f_l-f_l\to 0$, which in turn follows from the Monotone Convergence Theorem, and the Dominated Convergence Theorem (note $|P_1f_l-f_l|\le 1$ and recall that $\tau$ is continuous).}

\medskip

{Next, consider the process $Y^\tau$. Localizing by means of the stopping times $\upsilon_R$, $R=1,2,\ldots$ and using the non-explosion of $Y^\tau$ it is easy to see that, for every $t\ge0$, the law of $Y^\tau$ is absolutely continuous with respect to the law of $Y$ on ${\mathcal F}_t^Y$ with the corresponding density being given by $\frac{\tau(Y(t))}{\tau(y_0)}$  (see, e.g., the proof of Theorem 7.2 in \cite{LS} for a similar argument). Moreover, to establish the Markov property of $Y^\tau$ it suffices to show that, for every $h\in C_c(\mcal{Y})$ and $0\le s<t<\infty$, 
\eq\label{htrans_semi}
\ev\big[h(Y^\tau(t))\,\big|\,{\mathcal F}_s^Y\big]
=\tau(Y^\tau(s))^{-1} Q_{t-s}(\tau h)(Y^\tau(s)).
\en
To this end, we pick an event $A\in{\mathcal F}_s^Y$ and compute 
\[
\begin{split}
& \ev\big[\tau(Y^\tau(s))^{-1} Q_{t-s}(\tau h)(Y^\tau(s))\,\mathbf{1}_A\big]
=\frac{1}{\tau(y_0)}\,\ev\big[Q_{t-s}(\tau h)(Y(s))\,\mathbf{1}_A\big] \\
& =\frac{1}{\tau(y_0)}\,\ev\big[\ev\big[(\tau h)(Y(t))\,\mathbf{1}_A\,\big|
\,{\mathcal F}_s^Y\big]\big]=\ev[h(Y^\tau(t))\,\mathbf{1}_A].
\end{split}
\]}

\smallskip

{We proceed to the Feller property of $Y^\tau$. Consider the function $y\mapsto\tau(y)^{-1}\,Q_t(\tau h)(y)$ for some $h\in C_0(\mcal{Y})$ and $0\le t<\infty$ whose membership in $C_0(\mcal{Y})$ we need to show. A uniform approximation of $h$ by functions in $C_c(\mcal{Y})$ reveals that we may assume without loss of generality that $h\in C_c(\mcal{Y})$. For such an $h$ the continuity of $y\mapsto\tau(y)^{-1}\,Q_t(\tau h)(y)$ is a direct consequence of the Feller property of $Y$. Moreover, for a point $y_0$ of distance $R$ from the support of $h$ we have
\[
\begin{split}
& \big|\tau(y_0)^{-1}\,Q_t(\tau h)(y_0)\big|
=\Big|\ev\big[(\tau(Y(\upsilon_R)))^{-1}\ev\big[\tau(Y(t))\,h(Y(t))\,\mathbf{1}_{\{\upsilon_R\le t\}}\,\big|\,{\mathcal F}^Y_{\upsilon_R}\big]\big]\Big| \\
& \le\frac{\sup_{y\in\mathrm{supp}\,h} \tau(y)}{\inf_{y\in\mathrm{supp}\,h} \tau(y)}\,\ev[|h(Y(t))|].
\end{split}
\]
The latter expectation tends to zero in the limit $R\to\infty$ by the Feller property of $Y$. Therefore the function $y\mapsto\tau(y)^{-1}\,Q_t(\tau h)(y)$ belongs to $C_0(\mcal{Y})$ which, in view of path continuity, implies that $Y^\tau$ is a Feller process. The formula \eqref{htrans_gen} for its generator follows immediately from the formula \eqref{htrans_semi} for its semigroup. Now, to prove $Q^\tau\iprod{\Xi} P$, we first claim that for $f \in C_c^{\infty}(\mcal{X})\cap\mcal{D}(\mcal{A}^X)$, $Lf \in \mcal{D}(\mcal{A}^Y)$ and $\mcal{A}^YLf = L\mcal{A}^Xf$. We calculate
%\comment{by \eqref{PDE}. This and the uniqueness for the Cauchy problem associated with ${\mathcal A}^\tau$ (Proposition II.6.2 in \cite{EN}) give $Q^\tau\iprod{\Xi} P$ exactly as in Step 3 of the proof of Theorem \ref{main1}.}

\begin{equation}\label{gencalc}
\begin{split}
\frac{1}{t}\big(Q_t Lf(y) - Lf(y)\big) = \frac{1}{t}&\int_{\mcal{X}}\big(Q_t \Lambda(y,x) - \Lambda(y,x)\big)f(x)\,\mathrm{d}x \\
=\frac{1}{t}&\int_{\mcal{X}}\bigg(\int_0^t Q_s \mcal{A}^Y \Lambda(y,x)\, \mathrm{d}s\bigg)f(x)\,\mathrm{d}x \\
= \frac{1}{t}&\int_0^t Q_s\bigg(\int_{\mcal{X}}\mcal{A}^Y \Lambda(\cdot,x)f(x)\,\mathrm{d}x \bigg)(y)\,\mathrm{d}s\\
=\frac{1}{t}&\int_0^t Q_s\bigg(\int_{\mcal{X}}\Lambda(\cdot,x)\mcal{A}^Xf(x)\,\mathrm{d}x \bigg)(y)\,\mathrm{d}s
\end{split}
\end{equation}
The first equality follows from Fubini's Theorem and the boundedness of $f$ and the second equality is due to the Kolmogorov forward equation for the semigroup $(Q_t)$. The third equality results from Fubini's theorem which applies due to the uniform boundedness of $\mcal{A}^Y\Lambda$ on $\text{supp}(f) \times \mcal{Y}$ and the compactness of $\text{supp}(f)$. The final equality follows from \eqref{PDE}. Due to the fact that $L\mcal{A}^Xf \in C_0(\mcal{Y})$, the Feller-Markov property of $Y$ implies that the final term in \eqref{gencalc} converges uniformly to $L\mcal{A}^X f$ and so we have our claim. The formula \eqref{htrans_gen} then shows that $\mcal{A}^\tau \Xi f = \tau^{-1} \mcal{A}^Y L f = \tau^{-1}L\mcal{A}^X f = \Xi \mcal{A}^X f$ which can be extended to $f \in \mcal{D}(\mcal{A}^X)$ due to our assumption that $C_c^{\infty}(\mcal{X})\cap\mcal{D}(\mcal{A}^X)$ is a core for $\mcal{D}(\mcal{A}^X)$. This, along with the uniqueness for the Cauchy problem associated with ${\mathcal A}^\tau$ (Proposition II.6.2 in \cite{EN}), yields $Q^\tau\iprod{\Xi} P$.
} \ep

\medskip

If $\mcal{A}^Y$ is the generator of a one-dimensional homogeneous diffusion, then there are only two linearly independent choices for $\tau$, the constant function and the scale function of $\mcal{A}^Y$. See Remark \ref{rmk:harfn} in Section \ref{sec:exmp} below and the proposition preceding it for more details. In general, suppose $\mcal{A}^Y$ satisfies the Liouville property, that is, any bounded function $\tau$ satisfying $\mcal{A}^Y \tau=0$ has to be constant. Then, once we show $\tau$ is bounded, a further $h$-transform is unnecessary.  The Liouville property is satisfied by many natural operators. For example, if {$\mcal{A}^Y$ is a strictly elliptic operator of the form $\frac{1}{2}\sum_{k,l=1}^n \partial_{y_k}\rho_{kl}(y)\partial_{y_l}$ with $\rho$ being bounded, then the Liouville property holds (see \cite{Mo}, p. 590). For examples of nonreversible diffusions possessing the Liouville property we refer to \cite{PW}.}

%%%%%%%%%%%%%%%%%%%%%%%%%%%%%%%%%%%%%%%%%%%%
\section{On various properties of intertwined diffusions} \label{sec:properties}
%%%%%%%%%%%%%%%%%%%%%%%%%%%%%%%%%%%%%%%%%%%%

We prove several results on properties of intertwined processes and semigroups. {We start with an iteration of the coupling construction in Theorem \ref{main1}. To this end, consider the setup of Theorem \ref{main1} and suppose one is given another diffusion $S$ with state space $\mcal{S}\subset\rr^k$ and generator 
\eq\label{Sgen}
\mcal{A}^S = \sum_{i=1}^k \eta_i(s) \partial_{s_i} + \frac{1}{2}\sum_{i,j=1}^k \sigma_{ij}(s) \partial_{s_i} \partial_{s_j}
\en
satisfying Assumption \ref{main_asmp}. In addition, let $\tilde{L}$ be a stochastic transition operator from $\mcal{S}$ to $\mcal{Y}$ with a positive kernel $\tilde{\Lambda}$ and set $\tilde{V}=\log\tilde{\Lambda}$. The following theorem provides a coupling construction realizing the commutative diagram in Figure \ref{fig:Mkvdiff}.} 

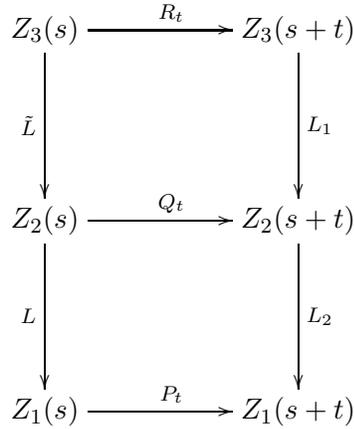
\begin{figure}[t]
\centerline{
\xymatrix@=5em{
Z_3(s) \ar[d]_{\tilde{L}} \ar[r]^{R_t} & Z_3(s+t) \ar[d]^{\tilde{L}} \\
Z_2(s) \ar[d]_{L} \ar[r]^{ Q_t } & Z_2(s+t) \ar[d]^{L} \\
Z_1(s) \ar[r]^{P_t} & Z_1(s+t)
}
} 
\caption{Hierarchy of intertwined diffusions.}
\label{fig:Mkvdiff}
\end{figure}

\begin{thm}\label{thm:product}
{In the setting of the previous paragraph suppose that the operator 
\[
f\mapsto\int_{\mcal{Y}} \tilde{\Lambda}(\cdot,y)\,f(y)\,\mathrm{d}y
\]
maps $C_0(\mcal{Y})$ into $C_0(\mcal{S})$ with $\tilde{\Lambda}$ being continuously differentiable in $s$. Assume that the diffusion $(Z_1,Z_2)$  whose generator is given by \eqref{Zgen} satisfies Assumption \ref{main_asmp} and the assumptions of Theorem 1 (in particular, both $\mcal{X}$ and $\mcal{Y}$ must be open). For any $z\in \rr^{m+n+k}=\rr^m\times\rr^n\times\rr^k$ write $z=(x,y,s)$ and consider a diffusion $Z=(Z_1,Z_2,Z_3)$ with state space $\mcal{X}\times\mcal{Y}\times\mcal{S}$, generator   
\[
\mcal{A}^Z = \mcal{A}^X + \mcal{A}^Y + \mcal{A}^S + \big(\nabla_y V(y,x)\big)' \rho(y)\,\nabla_y + \big(\nabla_s \tilde{V}(s,y)\big)'\sigma(s)\,\nabla_s\,,
\] 
and boundary conditions corresponding to those of $X,\,Y,\,S$. Suppose that the SDE or SDE with reflection (SDER) associated with $\mcal{A}^Z$ is well-posed, its solution is a Feller-Markov process and that the conditional density of $Z_2(0)$ at $y$, given $Z_3(0)=s$, is $\tilde{\Lambda}(s,y)$, and the conditional density of $Z_1(0)$ at $x$, given $Z_2(0)=y, Z_3(0)=s$, is $\Lambda(y,x)$ (in particular, it is independent of $s$).} 
 
{If $\tilde{\Lambda}$ is such that $\tilde{\Lambda}(\cdot,y)$ is in the domain of ${\mathcal A}^S$ for all $y\in\mcal{Y}$ with ${\mathcal A}^S\tilde{\Lambda}$ being continuous on $\mcal{S}\times\mcal{Y}$ and bounded on $\mcal{S}\times K$ for any compact subset $K$ of $\mcal{Y}$, $\tilde{\Lambda}(s,\cdot)$ is in the domain of $({\mathcal A}^Y)^*$ for all $s\in\mcal{S}$, $C_c^{\infty}(\mcal{X}\times\mcal{Y}\times\mcal{S})\cap\mcal{D}(\mcal{A}^Z)$ is a core for $\mcal{D}(\mcal{A}^Z)$, and
\eq\label{new_wave}
\left(\mcal{A}^Y\right)^* \tilde{\Lambda} = \mcal{A}^S\,\tilde{\Lambda}\quad\text{on}\quad\mcal{Y}\times\mcal{S},
\en 
then $Z=S\,\langle\tilde{\Lambda}\Lambda\rangle\,(Z_1,Z_2)$ and satisfies \eqref{eq:condlaw}.} 
\end{thm}

\noindent\textbf{Proof.} {By applying It\^o's formula to functions of $(Z_1,Z_2)$ it is easy to see that $(Z_1,Z_2)$ solves the SDE (SDER resp.) associated with the generator of \eqref{Zgen} and the reflection directios corresponding to those of $X,\,Y$. In particular, $(Z_1,Z_2)$ is the intertwining constructed in Theorem \ref{main1}, and we write ${\mathcal A}^{Z_1,Z_2}$ for the corresponding generator.}

\medskip

{It is easily checked that $\tilde{\Lambda}\Lambda$ satisfies conditions (i)-(iii) of Assumption \ref{asmpthm1}, so it only remains to show that $\tilde{\Lambda}(s,\cdot)\,\Lambda$ is in the domain of  $\left({\mathcal A}^{Z_1,Z_2}\right)^*$ for all $s\in\mcal{S}$, and 
\eq\label{lift}
\big({\mathcal A}^{Z_1,Z_2}\big)^*(\tilde{\Lambda}\,\Lambda)
=\big(\big(\mcal{A}^Y\big)^* \tilde{\Lambda}\big)\,\Lambda
\quad\text{on}\quad\mcal{X}\times\mcal{Y}\times\mcal{S},
\en
since then the theorem will follow from Theorem \ref{main1} for the diffusions $(Z_1,Z_2)$, $S$ and kernel $\tilde{\Lambda}(s,y)\,\Lambda(y,x)$ (note that the right-hand side of \eqref{lift} is $\mcal{A}^S (\tilde{\Lambda}\,\Lambda)$ by \eqref{new_wave}). In other words, we need to prove  
\eq\label{lift2}
\int_{\mcal{X}\times\mcal{Y}} \big(\big(\mcal{A}^Y\big)^* \tilde{\Lambda}\big)(s,y)\,\Lambda(y,x)\,f(x,y)\,\mathrm{d}x\,\mathrm{d}y
=\int_{\mcal{X}\times\mcal{Y}} \tilde{\Lambda}(s,y)\,\Lambda(y,x)\,({\mathcal A}^{Z_1,Z_2}f)(x,y)\,\mathrm{d}x\,\mathrm{d}y
\en
for all $f\in C_0(\mcal{X}\times\mcal{Y})$ in the domain of ${\mathcal A}^{Z_1,Z_2}$.}

\medskip

{Without loss of generality we may and will assume that $f\in C^\infty_c(\mcal{X}\times\mcal{Y})\cap\mcal{D}(\mcal{A}^{Z_1,Z_2})$, since otherwise we can approximate $f$ by a sequence of functions $f_l$, $l\in\nn$ in $C^\infty_c(\mcal{X}\times\mcal{Y})\cap\mcal{D}(\mcal{A}^{Z_1,Z_2})$ such that $f_l\to f$ and $({\mathcal A}^{Z_1,Z_2}f_l)\to({\mathcal A}^{Z_1,Z_2}f)$ uniformly on $\mcal{X}\times\mcal{Y}$ and pass to the limit $l\to\infty$ in the identity \eqref{lift2} for $f_l$. Now, an application of Fubini's Theorem together with the definition of $({\mathcal A}^Y)^*$ and a product rule as in \eqref{prod_rule} gives for the left-hand side of \eqref{lift2}: 
\[
\begin{split}
&\int_{\mcal{X}} \int_{\mcal{Y}} \tilde{\Lambda}(s,y)\,
\big(({\mathcal A}^Y\Lambda)f+(\nabla_y\Lambda)'\rho\,\nabla_y f+\Lambda\,{\mathcal A}^Y f\big)(y,x)\,\mathrm{d}y\,\mathrm{d}x \\
&=\int_{\mcal{X}} \int_{\mcal{Y}} \tilde{\Lambda}(s,y)\big(({\mathcal A}^Y\Lambda)f\big)(y,x)\,\mathrm{d}y\,\mathrm{d}x
+\int_{\mcal{X}} \int_{\mcal{Y}} \tilde{\Lambda}(s,y)
\big(\Lambda((\nabla_y V)'\rho\,\nabla_y f+{\mathcal A}^Y f)\big)(y,x)\,\mathrm{d}y\,\mathrm{d}x.
\end{split}
\]
In view of Fubini's Theorem, \eqref{PDE}, and the definition of $({\mathcal A}^X)^*$, the first summand in the latter expression computes to 
\[
\int_{\mcal{Y}} \tilde{\Lambda}(s,y)\,\int_{\mcal{X}} \Lambda(y,x)\,({\mathcal A}^X f)(x,y)\,\mathrm{d}x\,\mathrm{d}y.  
\] 
Plugging this in one obtains the right-hand side of \eqref{lift2} thanks to Fubini's Theorem.} \ep

\begin{rmk} 
{It is clear that a repeated application of the above theorem can create couplings $(Z_1,Z_2,\ldots,Z_l)$ of any number of diffusions. We refer to Section \ref{sec_Whittaker} below for an important example arising in the study of random polymers.}
\end{rmk}

\medskip

\nin\tbf{Duality and time-reversal.} Our next result is a version of Bayes' rule. Suppose $Q \iprod{L} P$ for some $(P_t)$, $(Q_t)$, and $L$. Is there a transition kernel $\lhat$ such that $P\,\langle\lhat\rangle\,Q$ (see Figure \ref{fig:dualintw})? We show that this is the case when both $(P_t)$ and $(Q_t)$ are reversible with respect to their respective invariant measures. This also allows to find the time reversal of the diffusion with generator given by \eqref{Zgen}.

\begin{figure}[t]
\centerline{
\xymatrix@=8em{
Z_2(s) \ar @<1ex> [d]^{L} \ar[r]^{ Q_t } & Z_2(s+t) \ar[d]^{L}\\
Z_1(s) \ar[r]^{P_t} \ar @/^2pc/ [u]^{\bayL} & Z_1(s+t) \ar @/_2pc/ [u]_{\bayL} 
}
} 
\caption{Flipping the order of intertwining.}
\label{fig:dualintw}
\end{figure}
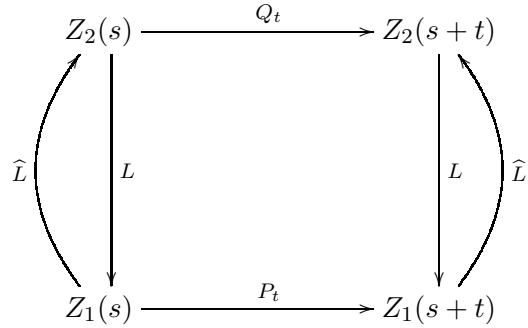

\begin{defn}
We say that two semigroups $(P_t)$ and $(\phat_t)$ on $\rr^d$ are in duality with respect to a probability measure $\nu$ if they satisfy
\eq\label{eq:duality}
\int_{\rr^d} \left(P_t\,f\right)\,g\,\mathrm{d}\nu = \int_{\rr^d} f\,(\phat_t\,g)\,\mathrm{d}\nu \quad \text{for all bounded measurable $f,\,g$ and all $t\ge 0$}. 
\en
We say $(P_t)$ is reversible with respect to $\nu$ if the above holds with $(\phat_t)=(P_t)$. 
\end{defn}

The definition can be restated as: the Markov process with semigroup $(P_t)$ and initial distribution $\nu$, looked at backwards in time, is Markovian with transition semigroup $(\phat_t)$.

\medskip

Consider two diffusion semigroups $(P_t)$ and $(Q_t)$ as in Assumption \ref{main_asmp} and a stochastic transition operator $L$ such that $Q \iprod{L} P$. Suppose there exist semigroups $(\phat_t)$, $(\qhat_t)$ and two probability measures $\nu_1$, $\nu_2$ such that
\begin{enumerate}
\item[(i)] $(P_t)$ and $(\phat_t)$ are in duality with respect to $\nu_1$, and $(Q_t)$ and $(\qhat_t)$ are in duality with respect to $\nu_2$.
\item[(ii)] $\nu_1$, $\nu_2$ have full support on $\mcal{X}$, $\mcal{Y}$ and are absolutely continuous with respect to the Lebesgue measure with continuous density functions $h_1$, $h_2$, respectively.
\item[(iii)]  $\nu_1$ is the unique stationary measure for $(P_t)$ and $\nu_2$ is a stationary measure for $(Q_t)$.
\comment{\item[(iii')] $(P_t)$ and $(Q_t)$ are ergodic in the sense that, for any probability measures $\mu$, $\nu$ on $\mcal{X}$, $\mcal{Y}$, respectively, one has the weak convergences
\eq\label{eq:ergodic}
\lim_{t\rightarrow \infty} \frac{1}{t}\int_0^t \mu\,P_s\,\mathrm{d}s
= \nu_1, 
\quad \lim_{t\rightarrow \infty} \frac{1}{t}\int_0^t \nu\,Q_s\,\mathrm{d}s
= \nu_2. 
\en }
\end{enumerate}
\begin{thm}\label{thm:time reversal}
Let $\Lambda$ denote the transition kernel corresponding to $L$ and suppose that it is jointly continuous. Define
\eq\label{eq:bden}
\lamhat(x,y)=\Lambda(y,x)\,\frac{h_2(y)}{h_1(x)}
\en
and write $\lhat$ for the corresponding transition operator. Then, $\lamhat$ is a stochastic transition kernel, and $\phat\,\langle\lhat\rangle\,\qhat$. 
\end{thm}

\comment{
we have the following conclusions:

\begin{enumerate}[(i)]
\item $\lamhat$ is a stochastic transition kernel, and $\phat\,\langle\lhat\rangle\,\qhat$. 
\item Suppose that the conditions of Theorem \ref{main1} are satisfied for each of the triplets $(P_t)$, $(Q_t)$, $L$ and $(\phat_t)$, $(\qhat_t)$, $\lhat$ and let $(R_t)$ and $(\rhat_t)$ be the semigroups of the diffusions corresponding to the intertwinings $Q \iprod{L} P$ and $\phat\,\langle\lhat\rangle\,\qhat$ via Theorem \ref{main1}, respectively. Define a probability measure $\rho$ on $\mcal{X}\times\mcal{Y}$ given by its density $\Lambda(y,x)\,h_2(y)$ with respect to the Lebesgue measure. Then, the semigroups $(R_t)$ and $(\rhat_t)$ are in duality with respect to $\rho$. 
\end{enumerate}
}

\noindent\textbf{Proof.} We first argue that $\lamhat$ is a stochastic transition kernel (and, thus, $\lhat$ is a stochastic transition operator). We need to show that
\eq\label{eq:revprob}
\int_{\mcal{Y}} \Lambda(y,x)\,h_2(y)\,\mathrm{d}y = h_1(x),
\en
which is equivalent to the identity $\nu_2 L = \nu_1$. We calculate $\nu_2 L P_t = \nu_2 Q_t L = \nu_2 L$ and, by assumption (iii), conclude that $\nu_2 L = \nu_1$ from which \eqref{eq:revprob} readily follows.
\comment{To see this, let $f$ be a continuous bounded function on $\mcal{X}$, $\nu$ be a probability measure on $\mcal{Y}$, and $\mu=\nu\,L$. Using the ergodicity condition \eqref{eq:ergodic} twice, $Q\iprod{L}P$, and Fubini's Theorem we derive
\[
\begin{split}
& \int_{\mcal{X}} f(x)\,h_1(x)\,\mathrm{d}x
= \lim_{t\rightarrow \infty} \frac{1}{t}\int_0^t \int_{\mcal X} f\,\mathrm{d}(\mu\,P_s)\,\mathrm{d}s 
= \lim_{t\rightarrow \infty} \frac{1}{t}\int_0^t 
\int_{\mcal{X}} f\,\mathrm{d}(\nu\,Q_s\,L)\,\mathrm{d}s \\
& = \lim_{t\rightarrow \infty} \frac{1}{t}\int_0^t 
\int_{\mcal{Y}} (Lf)\,\mathrm{d}(\nu\,Q_s)\,\mathrm{d}s
= \int_{\mcal{Y}} (Lf)(y)\,h_2(y)\,\mathrm{d}y 
=\int_{\mcal{X}} f(x)\,\int_{\mcal{Y}} \Lambda(y,x)\,h_2(y)\,\mathrm{d}y\,\mathrm{d}x.
\end{split}
\]
Here we have used that $Lf$ is continuous (and bounded) due to the continuity of $\Lambda$, the boundedness of $f$, and the generalized dominated convergence theorem. The claim \eqref{eq:revprob} readily follows. } 

\medskip
\color{black}
Next, we show $\phat\,\langle\lhat\rangle\,\qhat$. To this end, consider continuous bounded functions $f$, $g$ on $\mcal{X}$, $\mcal{Y}$, respectively. For any fixed $t>0$, the duality relation \eqref{eq:duality}, Fubini's Theorem, and $Q\iprod{L} P$ yield
\eq\label{eq:timerev1}
\begin{split}
&\int_{\mcal{X}} (\phat_t\,\lhat\,g)(x)\,f(x)\,\mathrm{d}\nu_1(x)
= \int_{\mcal{X}} (\lhat\,g)(x)\,(P_t\,f)(x)\,h_1(x)\,\mathrm{d}x \\
&= \int_{\mcal{X}} \left(\int_{\mcal{Y}} \Lambda(y,x)\,g(y)\,h_2(y)\,\mathrm{d}y\right)(P_t\,f)(x)\,\mathrm{d}x 
= \int_{\mcal{Y}} \left(\int_{\mcal{X}} \Lambda(y,x)\,(P_t\,f)(x)\,\mathrm{d}x\right)g(y)\,h_2(y)\,\mathrm{d}y\\
&= \int_{\mcal{Y}} (L\,P_t\,f)(y)\,g(y)\,h_2(y)\,\mathrm{d}y
= \int_{\mcal{Y}} (Q_t\,Lf)(y)\,g(y)\,\mathrm{d}\nu_2(y).
\end{split}
\en
On the other hand, a similar calculation shows
\eq\label{eq:timerev2}
\begin{split}
&\int_{\mcal{X}} (\lhat\,\qhat_t\,g)(x)\,f(x)\,\mathrm{d}\nu_1(x)
= \int_{\mcal{X}} \left(\int_{\mcal{Y}} \Lambda(y,x)\,(\qhat_t\,g)(y)\,\mathrm{d}\nu_2(y)\right) f(x)\,\mathrm{d}x \\
&=\int_{\mcal{X}} \left(\int_{\mcal{Y}} (Q_t\,\Lambda)(y,x)\,g(y)\,\mathrm{d}\nu_2(y)\right) f(x)\,\mathrm{d}x
=\int_{\mcal{Y}} \left(\int_{\mcal{X}} (Q_t\,\Lambda)(y,x)\,f(x)\,\mathrm{d}x\right) g(y)\,\mathrm{d}\nu_2(y)\\
&=\int_{\mcal{Y}} (Q_t\,Lf)(y)\,g(y)\,\mathrm{d}\nu_2(y).
\end{split}
\en
Consequently, the first expressions in \eqref{eq:timerev1} and \eqref{eq:timerev2} are equal, so that $\phat\,\langle\lhat\rangle\,\qhat$. \ep

\comment{
\medskip

\nin\tbf{Step 3.} We move on to showing that $(R_t)$ and $(\rhat_t)$ are in duality with respect to $\rho$. Repeating the argument preceding \eqref{Z_transition} we find that $R_t$ and $\rhat_t$ are given by
\[
P_t(x,\mathrm{d}x')\,\frac{Q_t(y,\mathrm{d}y')\,\Lambda(y',x')}
{\int_{\mcal{Y}} Q_t(y,\mathrm{d}z)\,\Lambda(z, x')}\quad \text{and}\quad
\phat_t(x,\mathrm{d}x')\,
\frac{\qhat_t(y,\mathrm{d}y')\,\lamhat(x',y')}
{\int_{\mcal{X}} \phat_t(x,\mathrm{d}z')\,\lamhat(z',y')},
\]
respectively. 

\medskip

Now, consider the intertwining $Z_2 \iprod{L} Z_1$ with initial distribution $\rho$ and fix a $t>0$. Since $Z_2(0)$ has distribution $\nu_2$, it follows from condition \eqref{i1} of Definition \ref{idef} that $Z_2(t)$ has also distribution $\nu_2$. Since the law of $Z_1(t)$ conditional on $Z_2(t)$ is given by $L$, the joint law of $(Z_1(t),Z_2(t))$ must be given by $\rho$. Thus, $\rho$ is an invariant distribution of $(Z_1,Z_2)$. Consequently, to show the desired duality it suffices to argue that $\rhat_t$ gives the conditional law of $(Z_1(0),Z_2(0))$ given $(Z_1(t),Z_2(t))$. This can be seen by taking the joint distribution
\[
h_2(y')\,\Lambda(y',x')\,P_t(x',\mathrm{d}x)\,\frac{Q_t(y',\mathrm{d}y)}{\int_{\mcal{Y}} Q_t(y',\mathrm{d}z)\,\Lambda(z,x)}
\]
of $(Z_1(0),Z_2(0),Z_1(t),Z_2(t))$, plugging in 
\[
\int_{\mcal{Y}} Q_t(y',\mathrm{d}z)\,\Lambda(z,x)
=\frac{h_1(x)}{h_2(y')}\,\int_{\mcal{X}} \phat_t(x,\mathrm{d}z')\,\lamhat(z',y')
\]
(which is based on $Q\iprod{L}P$ and the duality of $(P_t)$ and $(\phat_t)$), and using the duality of $(P_t)$ and $(\phat_t)$, the duality of $(Q_t)$ and $(\qhat_t)$, and \eqref{eq:bden}.

}

\medskip

\nin\tbf{Simultaneous intertwining.} Exhibiting examples of intertwining among multidimensional processes is difficult. One needs to solve the equation \eqref{PDE} explicitly. The next result gives a systematic method of constructing intertwinings with multidimensional processes starting from intertwinings with one-dimensional ones. An important example of this construction, which arose originally in random matrix theory, is detailed in Section \ref{sec_DBM}.

\medskip

We ask the following question. Suppose one has diffusions $S,\,X,\,Y$ with generators given by \eqref{Sgen}, \eqref{Xgen}, \eqref{Ygen}, respectively, all satisfying Assumption \ref{main_asmp}, and stochastic transition operators $L_1,\,L_2$ with kernels $\Lambda_1,\,\Lambda_2$ such that the triplets $({\mathcal A}^S,{\mathcal A}^X,\Lambda_1)$ and $({\mathcal A}^S,{\mathcal A}^Y,\Lambda_2)$ satisfy the conditions of Theorem \ref{main1}. Can one construct a coupling $(S,X,Y)$ on a suitable probability space such that $X$ and $Y$ are conditionally independent given $S$ with $X\iprod{L_1}S$ and $Y\iprod{L_2}S$, the process $(X,Y)$ is a diffusion, and $(X,Y)\iprod{L}S$? We refer to Figure \ref{fig:simintw} for a commutative diagram representation. 

\begin{figure}[t]
\centerline{
\xymatrix@=4em{
 & X(u) \ar[dd]_<<<<<<<<<{L_1} \ar[rr] & & X(u+t) \ar[dd]^<<<<<<<<<{L_1} \\
 Y(u) \ar[dr]_{L_2} \ar[rr] & & Y(u+t) \ar[dr]_{L_2} & \\
 & S(u) \ar[rr] &  & S(u+t)
}
}
\caption{Simultaneous intertwining.}
\label{fig:simintw}
\end{figure}
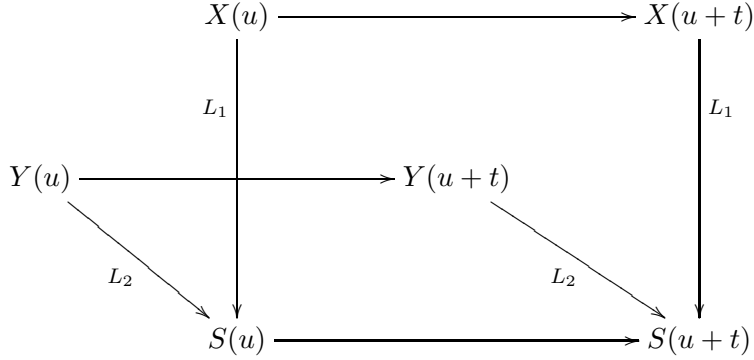

\medskip

One can take simple examples to check that this is not true in general, since the process $(X,Y)$ might not be Markovian. A consistency condition on $S$, $\Lambda_1$, $\Lambda_2$ is needed. {The answer to the above question turns out to be affirmative if the density $\Lambda_{12}(x,y,\cdot):=\Lambda_1(x,\cdot)\,\Lambda_2(y,\cdot)$ is integrable on $\mcal{S}$ and, viewed as a finite measure, satisfies
\eq\label{eq:orthokernel}
\Gamma\left(\Lambda_1(x,\cdot),\Lambda_2(y,\cdot)\right) 
:=(\gens)^*\Lambda_{12}(x,y,\cdot)
-((\gens)^*\Lambda_1(x,\cdot))\Lambda_2(y,\cdot)
-\Lambda_1(x,\cdot)(\gens)^*\Lambda_2(y,\cdot)=0
\en
for all $x\in\mcal{X}$, $y\in\mcal{Y}$ (in particular, we assume that $\Lambda_{12}(x,y,\cdot)$ is in the domain of $(\gens)^*$).} The operator $\Gamma$ is usually referred to as the carr\'e-du-champ operator and is of fundamental geometric and probabilistic importance. We refer to Section VIII.3 in \cite{RY} for an introduction and additional references. 

\begin{thm}\label{thm:combine}
{Suppose that \eqref{eq:orthokernel} holds, the total variation norm of $(\gens)^*\Lambda_{12}(x,y,\cdot)$ is locally bounded as $(x,y)$ varies in $\mcal{X}\times\mcal{Y}$, and the function 
\[
\tau(x,y):=\int_{\mcal{S}} \Lambda_{12}(x,y,s)\,\mathrm{d}s
\]
is continuously differentiable. Then, 
\begin{enumerate}[(i)]
\item $\tau$ is harmonic for ${\mathcal A}^X+{\mathcal A}^Y$ and, assuming it does not explode, the corresponding $h$-transform of the product diffusion with generator ${\mathcal A}^X+{\mathcal A}^Y$ is a Feller-Markov process on $\mcal{X}\times\mcal{Y}$ with generator
\[
{\mathcal A}^\tau ={\mathcal A}^X + {\mathcal A}^Y + (\nabla_x\log \tau)'\,a\,\nabla_x + (\nabla_y\log\tau)'\,\rho\,\nabla_y 
\]
and boundary conditions of $X$, $Y$ on $\partial\mcal{X}\times\mcal{Y}$, $\mcal{X}\times\partial\mcal{Y}$, respectively.
\item The kernel $\xi(x,y,s):=\frac{\Lambda_{12}(x,y,s)}{\tau(x,y)}$ of a stochastic transition operator $\Xi$ solves 
\[
{\mathcal A}^\tau\,\xi=({\mathcal A}^S)^*\xi\,.
\]
Moreover, if the triplet $({\mathcal A}^S,{\mathcal A}^\tau,\xi)$ satisfies the conditions of Theorem \ref{main1}, then the corresponding intertwining $(X,Y)\iprod{\Xi}S$ has the generator
\[
{\mathcal A}^S + {\mathcal A}^X + {\mathcal A}^Y + (\nabla_x\log\Lambda_1)'\,a\,\nabla_x + (\nabla_y\log\Lambda_2)'\,\rho\,\nabla_y
\]
with the boundary conditions of $S$, $X$, $Y$ on $\partial\mcal{S}\times\mcal{X}\times\mcal{Y}$, $\mcal{S}\times\partial\mcal{X}\times\mcal{Y}$, $\mcal{S}\times\mcal{X}\times\partial\mcal{Y}$, respectively, $X$ and $Y$ are conditionally independent given $S$ in that process, $(S,X)=S\iprod{L_1}X$, and $(S,Y)=S\iprod{L_2}Y$.   
\end{enumerate}}
\end{thm}

\noindent\textbf{Proof.} {Note first that, in view of ${\mathcal A}^X \Lambda_1=({\mathcal A}^S)^*\Lambda_1$, ${\mathcal A}^Y \Lambda_2=({\mathcal A}^S)^*\Lambda_2$, and \eqref{eq:orthokernel}, 
\[
({\mathcal A}^X+{\mathcal A}^Y)\,\Lambda_{12}
=({\mathcal A}^X \Lambda_1)\,\Lambda_2+\Lambda_1\,({\mathcal A}^Y\Lambda_2)
=(({\mathcal A}^S)^*\Lambda_1)\,\Lambda_2+\Lambda_1\,({\mathcal A}^S)^*\Lambda_2
=({\mathcal A}^S)^*\Lambda_{12}.
\]
Hence, according to Theorem \ref{thm:htrans} the function $\tau$ is harmonic for ${\mathcal A}^X+{\mathcal A}^Y$ and, provided it does not explode, the corresponding $h$-transform is a Feller-Markov process with the desired boundary conditions and generator given by 
\[
{\mathcal A}^\tau \phi=\tau^{-1}\,({\mathcal A}^X+{\mathcal A}^Y)(\tau\phi)
\]
on functions $\phi$ with $\tau\phi$ in the domain of ${\mathcal A}^X+{\mathcal A}^Y$.} 

\medskip

{Now, pick a function $\phi\in C_c^\infty(\mcal{X}\times\mcal{Y})$ in the domain of ${\mathcal A}^X+{\mathcal A}^Y$. Then the non-explosion of the $h$-transform shows that, for the product diffusion $(X,Y)$, the process $\tau(X(t),Y(t))$, $t\ge0$ is a martingale, so that by It\^o's formula
\eq\label{Itotauphi}
\begin{split}
(\tau\phi)(X(t),Y(t))-(\tau\phi)(X(0),Y(0))= \int_0^t \tau(X,Y)\,\mathrm{d}\phi(X,Y) &+\int_0^t \phi(X,Y)\,\mathrm{d}\tau(X,Y) \\
&+\langle \tau(X,Y),\phi(X,Y)\rangle(t).
\end{split}
\en
By Lemma \ref{lowreggito} in the appendix, we have the identity
\eq\label{QVtauphi}
\begin{split}
\langle \tau(X,Y),\phi(X,Y)\rangle(t) =\int_0^t ((\nabla \tau)'\,\kappa\,\nabla\phi)(X,Y)\,\mathrm{d}s,
\end{split}
\en
where $\kappa$ is the block matrix with blocks $a$ and $\rho$. Combining \eqref{Itotauphi}, \eqref{QVtauphi}, and the converse to Dynkin's formula (see, e.g., Proposition VII.1.7 in \cite{RY}) we conclude that $\tau\phi$ is in the domain of ${\mathcal A}^X+{\mathcal A}^Y$ with 
\[
({\mathcal A}^X+{\mathcal A}^Y)(\tau\phi)=\tau\,\mathcal A^X\phi +\tau\,{\mathcal A}^Y\phi + (\nabla_x\tau)'\,a\,\nabla_x\phi + (\nabla_y\tau)'\,\rho\,\nabla_y\phi.
\] 
This yields the desired representation of the closed operator ${\mathcal A}^\tau$, finishing the proof of (i).}

\medskip

{Using the equation $({\mathcal A}^X+{\mathcal A}^Y)\Lambda_{12}=({\mathcal A}^S)^*\Lambda_{12}$ and proceeding as in the proof of Theorem \ref{thm:htrans} (specifically, proving the analogue of \eqref{htrans_semi}), we obtain further that ${\mathcal A}^\tau\xi=({\mathcal A}^S)^*\xi$. Next, we employ the representation of the operator ${\mathcal A}^\tau$ in (i) and Theorem \ref{main1} to conclude that the intertwining $(X,Y)\iprod{\Xi}S$ has the described generator. Moreover, applying It\^o's formula to functions of $(S,X)$ ($(S,Y)$ resp.) one finds that $(S,X)$ ($(S,Y)$ resp.) is a realization of the intertwining $S\iprod{L_1}X$ ($S\iprod{L_2}Y$ resp.) via Theorem \ref{main1}. Finally, from the dynamics of $X$, $Y$ in $(S,X,Y)$ and the uniqueness for the (sub-)martingale problems associated with $S\iprod{L_1}X$, $S\iprod{L_2}Y$ it follows that, given $S$, the law of $(X,Y)$ is a product of the conditional law of $X$ given $S$ in $S\iprod{L_1}X$ and the conditional law of $Y$ given $S$ in $S\iprod{L_2}Y$. The proof of the theorem is finished.} \ep

\begin{rmk}
Theorem \ref{thm:combine} can be easily generalized to simultaneous intertwinings with any finite number of diffusions, provided the corresponding kernels jointly satisfy a product rule as in \eqref{eq:orthokernel}. 
\end{rmk}

%%%%%%%%%%%%%%%%%%%%%%%%%%%%%%%%%%
\section{On various old and new examples} \label{sec:exmp}
%%%%%%%%%%%%%%%%%%%%%%%%%%%%%%%%%%

%%%%%%%%%%%%%%%%%%%%%%%%%%%%%%%%%%
\subsection{Some examples of intertwining not covered by Theorem \ref{main1}}
%%%%%%%%%%%%%%%%%%%%%%%%%%%%%%%%%%

In \cite{CPY} the authors discuss various examples of intertwinings of Markov semigroups in continuous time. The perspective is somewhat different from ours and worth comparing. The set-up in \cite{CPY} is that of filtering. Let us first briefly describe their approach. 

\medskip

Consider two filtrations $(\mcal{F}_t:\;t\ge0)$ and $(\mcal{G}_t:\;t\ge0)$ such that $\mcal{G}_t$ is a sub-$\sigma$-algebra of $\mcal{F}_t$ for every $t$. Pick two processes: $X(t)$, $t\ge0$, which is $(\mcal{F}_t)$-adapted, and $Y(t)$, $t\ge 0$, which is $(\mcal{G}_t)$-adapted. Suppose that $X$ is Markovian with respect to $(\mcal{F}_t)$ with transition semigroup $(P_t)$, and $Y$ is Markovian with respect to $(\mcal{G}_t)$ with transition semigroup $(Q_t)$. Suppose further that there exists a stochastic transition operator $L$ such that 
\[
\ev[f(X(t))\,|\,\mcal{G}_t]=(Lf)(Y(t)),\quad t\ge 0
\] 
for all bounded measurable functions $f$. It is then shown in Proposition 2.1 of \cite{CPY} that the intertwining relation $Q_t\,L=L\,P_t$ holds for every $t\ge 0$. In the rest of the subsection we show that Theorems \ref{main1} and \ref{main2} do not cover the three major examples treated in \cite{CPY}. 

%Note that in all of their examples the condition (iii)' of Theorem \ref{main2} is automatically satisfied, since $X$ is $(\mcal{F}_t)$-Markovian. 

\smallskip

\begin{exm}\label{exm:dynkin} 
We start with the example in Section 2.1 of \cite{CPY} which is an instance of Dynkin's criterion for when a function of a Markov process is itself Markovian with respect to the same filtration. Take $Y$ to be an $n$-dimensional standard Brownian motion and let $X$ be its Euclidean norm. Let both $(\mcal{F}_t)$ and $(\mcal{G}_t)$ be the filtration generated by $Y$. Then the law of $X$ is that of a Bessel process of dimension $n$, and the transition operator $L$ is given by $(Lf)(y)=f\left(\abs{y}\right)$ for all bounded measurable functions $f$. 
%Since $X(t)$ is a deterministic function of $Y(t)$ for every $t\ge0$, the conditions (iv)' and (v) in Theorem \ref{main2} are obvious. 
However, $L$ does not admit a density, so that the regularity conditions in Theorem \ref{main2} do not hold. One can also see directly that the generator of the Feller-Markov process $(X,Y)$ is not of the form \eqref{Zgen}.  
\end{exm}

\begin{exm}\label{exm:pitman} 
The following example from Section 2.3 in \cite{CPY} is due to Pitman (see also \cite{RP} for similar ones). Let $B$ be a standard one-dimensional Brownian motion and take $X(t)= \abs{B(t)}$, $t\ge0$ and $Y(t)=\abs{B(t)} + \Theta(t)$, $t\ge0$ where $\Theta$ is the local time at zero of $B$. In addition, let $(\mcal{F}_t)$ and $(\mcal{G}_t)$ be the filtrations generated by $X$ and $Y$, respectively. Then, $X$ is a reflected Brownian motion and $Y$ is a Bessel process of dimension $3$. The transition operator $L$ is given by
\[
\ev[f(X(t))\,|\,\mcal{G}_t]= \int_0^1 f(x\,Y(t))\,\mathrm{d}x 
\]
for all bounded measurable functions $f$. In other words, the conditional law of $X(t)$ given $\mathcal{G}_t$ is the uniform distribution on $[0,Y(t)]$. 
%To check condition (iv)' of Theorem \ref{main2} in this example we use Pitman's theorem (see e.g. Section 4.a in \cite{DMY}). 
Let $R$ be a $3$-dimensional Bessel process starting from zero and set $J(t)=\inf_{s\ge t} R(s), t\ge 0$. 
Then, according to Pitman's Theorem, the law of the process $(X,Y)$ is the same as that of $(R-J,R)$. Moreover, the Markov property of $R$ shows that, for any $t\ge0$, conditional on $R(t)$, the random variable $J(t)$ is independent of $R(s)$, $0\le s<t$. 
%Translating this to the pair $(X,Y)$ gives condition (iv) of Definition \ref{idef}. However, condition (v) of Theorem \ref{main2} does not hold. This can be seen by comparing the distribution of $(R(s)-J(s),R(s),R(t)-J(t),R(t))$ given in equations (7), (8) of \cite{Im} to the joint distribution of \eqref{Z_transition} with $(P_t)$ and $(Q_t)$ being the semigroups of a reflected Brownian motion and a $3$-dimensional Bessel process, respectively (see e.g. Chapter XI in \cite{RY} for explicit formulas). 
However, \eqref{Zgen} does not give the generator of $(X,Y)$. Nonetheless, \eqref{PDE} does hold for $\Lambda(y,x)=y^{-1}$ on its domain $\{(y,x)\in\rr^2:\;0<x<y\}$ in the sense specified in Theorem \ref{thm:reflect}. Indeed, $\int_0^y y^{-1}\,\frac{1}{2}\,f''(x)\,\mathrm{d}x=\frac{1}{2}\,y^{-1}\,f'(y)$ for any function $f\in C_c^\infty([0,\infty))$ with $f'(0)=0$, which is consistent with \eqref{PDE_mod} due to ${\mathcal A}^Y y^{-1}=0$.   
\end{exm}

\begin{exm}[Process extension of Beta-Gamma algebra]\label{exm} 
The primary example in \cite{CPY} (see Section 3 therein) is a process extension of the well-known Beta-Gamma algebra. For $\alpha,\beta>0$, let $X_\alpha$, $X_\beta$ be two independent squared Bessel processes of dimensions $2\alpha$, $2\beta$, respectively, both starting from zero. Set $X=X_\alpha$ and $Y=X_\alpha+X_\beta$ and define $(\mathcal{F}_t)$ and $(\mathcal{G}_t)$ as the filtrations generated by the pair $(X,Y)$ and the process $Y$, respectively. Introduce further the stochastic transition operator
\[
(L_{\alpha,\beta}f)(y) = \frac{1}{B(\alpha,\beta)} 
\int_0^1 f\left(yz\right)\,z^{\alpha-1}\,(1- z)^{\beta-1}\,\mathrm{d}z
\]
acting on bounded measurable functions on $[0,\infty)$, where $B(\cdot,\cdot)$ is the Beta function. Clearly, the transition kernel corresponding to $L$ is given by 
\eq\label{eq:lambg}
\Lambda_{\alpha,\beta}(y,x)= \frac{y^{-1}}{B(\alpha,\beta)} \left( \frac{x}{y}\right)^{\alpha-1} \left( 1 - \frac{x}{y}  \right)^{\beta-1}\,\mathbf{1}_{(0,y)}(x).
\en
Theorem 3.1 in \cite{CPY} proves the intertwining $Q_t\,L_{\alpha,\beta} = L_{\alpha, \beta}\,P_t$, $t\geq0$ of the semigroups $(P_t)$ and $(Q_t)$ associated with $X$ and $Y$. 

\medskip 

In the course of the proof of Theorem 3.1 in \cite{CPY} the authors verify condition (iv) of our Definition \ref{idef} (see the display in the middle of page 325 therein). However, \eqref{eq:condlaw} cannot hold for the pair $(X,Y)$, and it is easy to see from the SDEs for $X_\alpha$, $X_\beta$ that the generator of $(X,Y)$ is not given by \eqref{Zgen}. Indeed, Theorem 1 cannot be used to construct intertwinings $(X,Y)$ with non-trivial covariation between $X$ and $Y$. Nonetheless, $\Lambda_{\alpha,\beta}$ does solve \eqref{PDE} on its domain $\{(y,x)\in\rr^2:\;0<x<y\}$ in the sense specified in Theorem \ref{thm:reflect}. Indeed, considering $\int_0^y \Lambda_{\alpha,\beta}(y,x)\,(2\alpha\,f'(x)+2x\,f''(x))\,\mathrm{d}x$ for a function $f\in C^\infty_c([0,\infty))$ and integrating by parts one obtains 
\[
\begin{split}
& \int_0^y \frac{2(\beta-1)}{B(\alpha,\beta)}
\,x^{\alpha-1}\,y^{1-\alpha-\beta}\,(y-x)^{\beta-3}\,\big((\alpha+\beta-2)x-\alpha\,y\big)\,f(x)\,\mathrm{d}x \\
& + \Big(2\alpha\,\Lambda_{\alpha,\beta}(y,x)\,f(x)+2x\,\Lambda_{\alpha,\beta}(y,x)\,f'(x)-\partial_x(2x\,\Lambda_{\alpha,\beta}(y,x))\,f(x)\Big)\Big|_0^y\,.
\end{split}
\]
On the other hand, by direct differentiation one verifies
\[
\mcal{A}^Y\,\Lambda_{\alpha,\beta}(y,x)=\frac{2(\beta-1)}{B(\alpha,\beta)}
\,x^{\alpha-1}\,y^{1-\alpha-\beta}\,(y-x)^{\beta-3}\,\big((\alpha+\beta-2)x-\alpha\,y\big),
\]
and the boundary terms are consistent with those in \eqref{PDE_mod} (up to the non-trivial diffusion coefficient in this example). 
\end{exm}

%%%%%%%%%%%%%%%%%%%%%%%%%%%%%%%%%
\subsection{Whittaker $2d$-growth model}\label{sec_Whittaker}
%%%%%%%%%%%%%%%%%%%%%%%%%%%%%%%%%

The following is an example of intertwined diffusions that appeared in the study of a semi-discrete polymer model in \cite{Oc}. The resulting processes were investigated further in \cite{BC} under the name \textit{Whittaker $2d$-growth model}. In the latter article, it is shown that such processes arise as diffusive limits of certain intertwined Markov chains which are constructed by means of Macdonald symmetric functions.   

\medskip

Fix some $N\in\nn$ and $a=(a_1,a_2,\ldots,a_N)\in\rr^N$ and consider the diffusion process $R=\big(R_{i}^{(k)},\;1\leq i\leq k\leq N\big)$ on $\rr^{N(N+1)/2}$ defined through the system of SDEs 
\eq
\begin{split}
&\mathrm{d}R^{(1)}_1(t)=\mathrm{d}W^{(1)}_1(t)+a_1\,\mathrm{d}t, \\
&\mathrm{d}R^{(k)}_1(t)=\mathrm{d}W^{(k)}_1(t)+\left(a_k+e^{R^{(k-1)}_1(t)-R^{(k)}_1(t)}\right)\,\mathrm{d}t, \\
&\mathrm{d}R^{(k)}_2(t)=\mathrm{d}W^{(k)}_2(t)+\left(a_k+e^{R^{(k-1)}_2(t)-R^{(k)}_2(t)}-e^{R^{(k)}_2(t)-R^{(k-1)}_1(t)}\right)\,\mathrm{d}t, \\
&\vdots\\
&\mathrm{d}R^{(k)}_{k-1}(t)=\mathrm{d}W^{(k)}_{k-1}(t)+\left(a_k+e^{R^{(k-1)}_{k-1}(t)-R^{(k)}_{k-1}(t)}
-e^{R^{(k)}_{k-1}(t)-R^{(k-1)}_{k-2}(t)}\right)\,\mathrm{d}t, \\
&\mathrm{d}R^{(k)}_k(t)=\mathrm{d}W^{(k)}_k(t)+\left(a_k-e^{R^{(k)}_k(t)-R^{(k-1)}_{k-1}(t)}\right)\,\mathrm{d}t,
\end{split}
\en
where $\big(W^{(k)}_i,\; 1\leq i\leq k\leq N\big)$ are independent standard Brownian motions. 

\medskip

Define the following two functions acting on vectors $r=\big(r_i^{(k)},\; 1\le i\le k\le N\big)$ in $\rr^{N(N+1)/2}$:
\[
\begin{split}
T_1(r)&=\sum_{k=1}^N a_k\bigg(\sum_{i=1}^k r^{(k)}_i - \sum_{i=1}^{k-1} r^{(k-1)}_i\bigg),\\
T_2(r)&= \sum_{1\leq i\leq k\leq N-1} \Big[ \exp\big(r^{(k)}_i-r^{(k+1)}_i\big) + \exp\big(r^{(k+1)}_{i+1}-r^{(k)}_i\big)\Big].
\end{split}
\]
Let $X$ be the diffusion process on $\rr^{(N-1)N/2}$ comprised by the coordinates $R_{i}^{(k)}$, $1\leq i\leq k\leq N-1$, write ${\mathcal A}^X$ for its generator, and let $Y$ be the diffusion on $\rr^N$ with generator given by
\eq
\begin{split}
&{\mathcal A}^Y = \frac{1}{2}\,\Delta+(\nabla\log\psi_a(y))\cdot\nabla,\\
&\psi_a(y) = \int_{\rr^{(N-1)N/2}} \exp\left(T_1(r)
- T_2(r)\right) \mathrm{d}r^{(1)}_1\ldots\,\mathrm{d}r^{(N-1)}_{N-1}\Big|_{r^{(N)}_1=y_1,\ldots,r^{(N)}_N=y_N}.
\end{split}
\en
As observed in Theorem 3.1 of \cite{Oc}, the generator ${\mathcal A}^Y$ can be rewritten as
\eq\label{WhittakerAY}
\frac{1}{2}\,\psi_a(y)^{-1}\,\left(H-\sum_{i=1}^N a_i^2\right)\,\psi_a(y),
\en
where $H=\Delta-2\sum_{i=1}^{N-1} e^{y_{i+1}-y_i}$ is the operator known as the \textit{Hamiltonian of the quantum Toda lattice} (see Section 2 of \cite{Oc} and the references therein for more details on the latter). 

\medskip

Let $x=(x_i^{(k)},\; 1\le i \le k \le N-1)$ be a vector in $\rr^{(N-1)N/2}$ and $y$ be a vector in $\rr^N$. One can naturally concatenate $y$ ``above'' $x$ to get a vector $r\in\rr^{N(N+1)/2}$. Consider the stochastic transition kernel
\[
\Lambda(y,x)=\frac{1}{\psi_a(y)}\exp\big(T_1(r) - T_2(r)\big). 
\]
The formulas for ${\mathcal A}^Y$ and $\Lambda$ show that the generator of $R$ is of the form \eqref{Zgen}. Moreover, the statement that $\Lambda$ solves \eqref{PDE} in the sense specified in Theorem \ref{main1} is implicitly contained in Section 9 of \cite{Oc} (see also Proposition 8.2 and, in particular, equation (12) therein for a related statement). Therefore we expect the Whittaker $2d$-growth model to be an instance of the construction in Theorem \ref{main1}, even though the detailed analysis of the function $\psi_a$ needed for the verification of the regularity conditions in Theorem \ref{main1} is a significant technical challenge. 

%%%%%%%%%%%%%%%%%%%%%%%%%%%%%%%%%%%%%%%
\subsection{Constructing new examples} 
%%%%%%%%%%%%%%%%%%%%%%%%%%%%%%%%%%%%%%%

The main difficulty in constructing intertwining relationships consists in finding explicit solutions of \eqref{PDE} that are positive. Even in the case that one of the two diffusions is one-dimensional, in which semigroup theory can be used to prove the existence of solutions, showing their positivity is not easy. In this subsection we construct several classes of positive solutions. 

\medskip

\noindent\textbf{Diffusions on compact state spaces.} {Suppose that the state spaces $\mcal{X}$, $\mcal{Y}$ of the diffusions $X$, $Y$ are compact, and that $X$ has an invariant distribution on $\mcal{X}$ with a positive continuous density $f$. A simple example of such a diffusion is a normally reflected Brownian motion on a compact domain, in which case $f$ is constant. Let $u$ be a continuous function that solves \eqref{PDE} on the compact $\mcal{X}\times\mcal{Y}$. Then there is a large enough constant $M$ such that $u+Mf$ is a positive solution of \eqref{PDE} (note that $({\mathcal A}^X)^*f=0$). Clearly, $u+Mf$ gives rise to an intertwining via Theorem \ref{thm:htrans}.} 

\medskip

{One might wonder how the choice of $M$ affects the resulting intertwining relationship. Assuming that $\tau(y):=\int_{\mcal{X}} u(y,x)\,\mathrm{d}x$ is continuously differentiable in $y$, the generator of the $h$-transform of $Y$ associated with $u+Mf$ via Theorem \ref{thm:htrans} reads
\[
{\mathcal A}^{\tau,M}
:={\mathcal A}^Y+\big(\nabla\log(\tau+M)\big)'\rho\,\nabla_y
={\mathcal A}^Y+\frac{(\nabla\tau)'}{\tau+M}\,\rho\,\nabla_y.
\]
If, in addition, the triplet $({\mathcal A}^X,{\mathcal A}^{\tau,M},u+Mf)$ satisfies the conditions of Theorem \ref{main1}, then the generator of the corresponding intertwining is given by
\[
{\mathcal A}^X+{\mathcal A}^Y+\bigg(\frac{(\nabla\tau)'}{\tau+M}
+\frac{(\nabla_y u)'}{u+Mf}\bigg)\,\rho\,\nabla_y.
\]
Consequently, different choices of $M$ lead to non-trivial changes in ${\mathcal A}^{\tau,M}$ and the latter generator, as well as in the corresponding diffusions.} 
 
\medskip

{For an example of this construction consider $\mcal{X}=\mcal{Y}=[-1,1]$ and take 
\[
{\mathcal A}^X=-2x\,\partial_x+(1-x^2)\partial_x^2,\quad 
{\mathcal A}^Y=(1-2y)\partial_y+(1-y^2)\partial_y^2.
\]
The corresponding processes $X$, $Y$ are examples of Jacobi (or, Wright-Fisher) diffusions. The latter play an important role in population genetics. The operator $({\mathcal A}^X)^*$, viewed as a differential operator acting on twice continuously differentiable functions on $[-1,1]$, coincides with ${\mathcal A}^X$ and admits eigenfunctions $(f_q)_{q\in\nn}$ with eigenvalues $q(q+1)$, $q\in\nn$ which are known as Legendre polynomials. The eigenfunctions $(g_q)_{q\in\nn}$ of the operator ${\mathcal A}^Y$ are known as Jacobi polynomials, and the corresponding eigenvalues are also given by $q(q+1)$, $q\in\nn$. Consequently, $u(y,x)=\sum_{q\in\nn} c_q\,f_q(x)\,g_q(y)$ is a solution of \eqref{PDE} whenever $\sum_{q\in\nn} |c_q|\,\|f_q\|_\infty\,\|g_q\|_\infty<\infty$ and $\sum_{q\in\nn} |c_q|\,q(q+1)\|f_q\|_\infty\,\|g_q\|_\infty<\infty$. Moreover, the uniform distribution on $[-1,1]$ is invariant for $X$. Thus, the functions $\frac{M}{2}+\sum_{q\in\nn} c_q\,f_q(x)\,g_q(y)$ are positive solutions of \eqref{PDE} for all $M>2\sum_{q\in\nn} |c_q|\,\|f_q\|_\infty\,\|g_q\|_\infty$ and give rise to intertwinings of $X$ with $h$-transforms of $Y$ as described above.} 
 
\medskip

\noindent\textbf{Intertwinings of multidimensional Brownian motions with $h$-transforms of Bessel processes.} The following lemma is well-known and is usually used to solve the classical wave equation in multiple space dimensions. For its proof we refer to the proof of Lemma 1 on page 71 in \cite{Ev}. 

\begin{lemma}
Let $u$ be a positive twice continuously differentiable probability density on $\rr^m$ with $m>1$. Let $\gamma_m=\pi^{m/2}/\Gamma(1+m/2)$ denote the volume of the unit ball in dimension $m$. For $r>0$ and $x\in\rr^m$, define the spherical means of $u$ by 
\eq\label{eq:sphmean}
\Lambda(r,x)=\frac{1}{m\gamma_m}\int_{\partial B(0,1)} u\left(x+rz\right)\,\mathrm{d}\theta(z),
\en
where $B(0,1)$ is the unit ball centered at $0$, and $\theta$ is the Lebesgue measure on its boundary. Then, $\Lambda(r,x)$ is positive and a classical solution of
\eq\label{eq:besbm}
\frac{m-1}{2r}\,\partial_r\,\Lambda(r,x) + \frac{1}{2}\,\partial_r^2\,\Lambda(r,x)=\frac{1}{2}\,\Delta_x\,\Lambda(r,x).
\en
\end{lemma}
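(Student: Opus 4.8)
The plan is to recognize the quantity $u(r,x)$ as the classical spherical mean of $u$ and then rederive the Euler--Poisson--Darboux identity, which is precisely \eqref{eq:besbm} after dividing by $2$. Nonnegativity is immediate: $u\ge 0$ pointwise and $\theta$ is a nonnegative measure, so the integral defining $u(r,x)$ is nonnegative. For the PDE, the first observation is that $m\gamma_m$ is exactly the surface area $\theta(\partial B(0,1))$ of the unit sphere, since $\frac{d}{dr}\big(\gamma_m r^m\big)=m\gamma_m r^{m-1}$; hence $u(r,x)$ is the average of $u$ over the sphere $\partial B(x,r)$. Throughout I would assume $u$ is twice continuously differentiable with enough decay to differentiate under the integral sign, which is the setting consistent with the cited result in \cite{Ev}; the general case then follows by a routine approximation argument.

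First I would differentiate in $r$ under the integral and use the chain rule to write
\[
\partial_r u(r,x)=\frac{1}{m\gamma_m}\int_{\partial B(0,1)}(\nabla u)(x+rz)\cdot z\,\mathrm{d}\theta(z).
\]
Since $z$ is the outward unit normal to $B(0,1)$ and $\nabla_z\big[u(x+rz)\big]=r(\nabla u)(x+rz)$, the divergence theorem converts this to a ball integral of the Laplacian: $\int_{\partial B(0,1)}(\nabla u)(x+rz)\cdot z\,\mathrm{d}\theta(z)=r\int_{B(0,1)}(\Delta u)(x+rz)\,\mathrm{d}z$. After the rescaling $y=rz$ this reads
\[
\partial_r u(r,x)=\frac{r^{1-m}}{m\gamma_m}\int_{B(0,r)}(\Delta u)(x+y)\,\mathrm{d}y.
\]

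Then I would differentiate this identity once more in $r$. The factor $r^{1-m}$ contributes a term $\frac{1-m}{r}\partial_r u(r,x)$ once the ball integral is rewritten back in terms of $\partial_r u(r,x)$, while differentiating the ball integral gives, by the fundamental theorem of calculus in the radial direction (equivalently, the co-area formula), the sphere integral $r^{m-1}\int_{\partial B(0,1)}(\Delta u)(x+r\omega)\,\mathrm{d}\theta(\omega)=r^{m-1}\,m\gamma_m\,\Delta_x u(r,x)$, where in the last equality I used that $\Delta_x$ commutes with the spherical average. Collecting the two terms yields $\partial_{rr}u(r,x)=\frac{1-m}{r}\partial_r u(r,x)+\Delta_x u(r,x)$, and dividing by $2$ gives exactly \eqref{eq:besbm}. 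There is no serious analytic obstacle here — this is the standard derivation of the Euler--Poisson--Darboux equation — so the only points requiring care are the justification of differentiation under the integral sign (handled by the smoothness assumption on $u$) and keeping track of the scaling factors $r^{1-m}$, $r^{m-1}$ and $m\gamma_m$ in the two differentiation steps.
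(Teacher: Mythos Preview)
Your proof is correct and is precisely the standard derivation of the Euler--Poisson--Darboux equation that the paper points to (the paper does not give its own argument but simply refers to Lemma~1 on p.~71 of \cite{Ev}, which is exactly the spherical-mean/divergence-theorem computation you reproduce). The only cosmetic point is that you should state the regularity hypothesis explicitly (e.g.\ $u\in C^2$), as you already indicate; otherwise there is nothing to add.
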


By Fubini's Theorem the kernel $\Lambda(r,x)$ is stochastic. This allows us to use Theorem \ref{main1} to construct intertwinings of multidimensional Brownian motions with Bessel processes of the same dimension. 
%For example, let $u$ be the density of a standard normal random vector $S$: 
%\[
%u(x)= \frac{1}{\left(2\pi\right)^{m/2}} \exp\big(-\frac{1}{2}\norm{x}^2\big).
%\]
%Fix an $x\in\rr^m$ and consider the random variable $R=\norm{x-S}$. Clearly, $r^{m-2}\,u(r,x)$ is a constant multiple of the density of $R^2$ at $r^2$. But $R^2$ is a non-central $\chi^2$ random variable of dimension $m$ and non-centrality parameter $\lambda=\norm{x}^2$. Its density at $r^2$ is given by
%\[
%f(r^2)=e^{-\norm{x}^2/2}\,\sum_{l=0}^\infty \frac{\norm{x}^{2l}}{l!\,2^l}\,h_{m+2l}(r^2),
%\] 
%where 
%\[
%h_q(y)=\frac{1}{2^{q/2}\,\Gamma(q/2)}\,y^{q/2-1}\,e^{-q/2}. 
%\]
%In view of
%\[
%\zeta_l:=\frac{1}{\left(2\pi\right)^{m/2}} \int_{\rr^m} \norm{x}^{2l}\,e^{-\norm{x}^2/2}\,\mathrm{d}x 
%= 2^l\,\frac{\Gamma(l+m/2)}{\Gamma(m/2)},
%\]
%the kernel $u(r,x)$ is integrable in $x$ for every fixed $r>0$ and the integral is a constant multiple of 
%\[
%\begin{split}
%r^{2-m}\,\sum_{l=0}^\infty \frac{\zeta_l}{l!\,2^l}\,h_{m+2l}(r^2)&
%=r^{2-m}\,\sum_{l=0}^\infty \frac{\Gamma(l+m/2)}{l!\,\Gamma(m/2)} 
%\,\frac{1}{2^{l+m/2}\,\Gamma(l+m/2)}\,r^{m+2l-2}\,e^{-r^2/2}\\
%&=\frac{e^{-r^2/2}}{\Gamma(m/2)\,2^{m/2}} \sum_{l=0}^\infty \frac{r^{2l}}{l!\,2^l}
%=\frac{1}{\Gamma(m/2)\,2^{m/2}}.
%\end{split}
%\] 
%In other words, the normalizing constant is independent of $r$ and we do not require any further $h$-transform. Hence, the kernel $u(r,x)$ corresponds to a link $L$ such that $Y\iprod{L}X$ with $Y$ being the Bessel process of dimension $m$ and $X$ being the $m$-dimensional standard Brownian motion. 
Note that such intertwinings are different from the one in Example \ref{exm:dynkin}, since for any given $r>0$ the density $\Lambda(r,\cdot)$ is  supported on the entire $\rr^m$.

\medskip

More generally, positive classical solutions of \eqref{eq:besbm} give rise to intertwinings of multidimensional Brownian motions with $h$-transforms of Bessel processes of the same dimension via Theorem \ref{thm:htrans}. Hereby, the possible $h$-transforms are characterized by the following proposition.  

\begin{prop}\label{prop:cons}
Let $\Lambda(r,x)$ be a positive, classical solution of \eqref{eq:besbm} with $m>1$. Suppose that $\int_{\rr^m} |\Delta_x \Lambda(r,x)|\,\mathrm{d}x$ is locally bounded as $r$ varies, and that the integral $\tau(r):=\int_{\rr^m} \Lambda(r,x)\,\mathrm{d}x$ is finite for all $r>0$ and continuous in $r$. Then, there exist constants $a,b\in\rr$ such that $\tau(r)=a+b\,r^{2-m}$ if $m>2$ and $\tau(r)=a+b\,\log r$ if $m=2$. In particular, if $\limsup_{r\downarrow0} |\tau(r)| < \infty$, then $\tau(r)$ is a constant. 
\end{prop}

\noindent\textbf{Proof.} {The regularity conditions on $\Lambda$ allow us to conclude that $\tau$ is harmonic for $\frac{m-1}{2r}\,\partial_r + \frac{1}{2}\,\partial_{rr}$ (see Theorem \ref{thm:htrans} and its proof). The proposition now follows from the remark at the bottom of p. 303 in \cite{RY} and the formulas for scale functions of Bessel processes in Section XI.1 of \cite{RY}.} \ep

\begin{rmk}\label{rmk:harfn}
The statement and the proof of Proposition \ref{prop:cons} readily extend to any one-dimensional diffusion instead of a Bessel process. All possible harmonic functions with respect to its generator are then given by affine transformations of a scale function of the process. For more details on scale functions we refer the reader to Section VII.3 in \cite{RY}. 
\end{rmk}

\medskip

\nin\tbf{$\sigma$-finite kernels.} In some cases $\sigma$-finite kernels can be combined to obtain finite ones via the procedure described in Theorem \ref{thm:combine}. As an example consider an orthonormal basis $\zeta_1,\zeta_2,\ldots,\zeta_k$ of $\rr^k$. Pick $k$ positive probability density functions  $f_1,f_2,\ldots,f_k$ on $\rr$ that are twice continuously differentiable, tend to zero at infinity together with their second derivatives, and whose second derivatives are integrable. Then, the $\sigma$-finite kernels
\[
\Lambda_i(x_i,s):=f_i(x_i+\iprod{s,\zeta_i}),\quad i=1,2,\ldots,k 
\]  
are classical solutions of $\Delta_s\Lambda_i=\partial_{x_i}^2\Lambda_i$. With $\Lambda(x,s):=\prod_{i=1}^k \Lambda(x_i,s)$, the orthonormality of the $\zeta_i$'s yields 
\[
\Delta_s\Lambda(x,s)=\sum_{j=1}^k \partial_{x_j}^2\Lambda_j(x_j,s)\,\prod_{i\neq j} \Lambda_i(x_i,s)=\Delta_x \Lambda(x,s)
\] 
in the classical sense and in the sense of Theorem \ref{main1}. Moreover, the kernel $\Lambda$ is stochastic and, hence, gives rise to an intertwining of two Brownian motions via Theorem \ref{main1}, provided the corresponding diffusion satisfies Assumption \ref{main_asmp}. 

%%%%%%%%%%%%%%%%%%%%%%%%%%%%%%%%%
\section{Interwinings of diffusions with reflections} \label{sec:reflection}
%%%%%%%%%%%%%%%%%%%%%%%%%%%%%%%%%

%%%%%%%%%%%%%%%%%%%%%%%%%%%%%%%%%
\subsection{Multilevel Dyson Brownian motion} \label{sec_DBM}
%%%%%%%%%%%%%%%%%%%%%%%%%%%%%%%%%

The following example is the main subject of study in \cite{W}. Consider the so-called \textit{Gelfand-Tsetlin cone}
\eq\label{GTc}
\overline{{\mathcal G}^N}:=\Big\{r=\big(r_{i}^{(k)}:\,1\leq i\leq k\leq N\big)\in\rr^{N(N+1)/2}:\;r^{(k-1)}_{i-1}\leq r_{i}^{(k)}\leq r_{i}^{(k-1)}\Big\}
\en 
for some $N\in\nn$, $N\ge2$. An element $r\in\overline{{\mathcal G}^N}$ is usually thought of in terms of the pattern of points $\big(r_i^{(k)},k\big)$, $1\leq i\leq k\leq N$ in the plane (see Figure \ref{GTpattern} for an illustration). 
\begin{figure}[t]\label{GTpattern}
\begin{centering}
\begin{tikzpicture}[scale=0.75]
\draw [help lines] (0,0) grid (9,5);
\draw [thick, <->] (0,6) -- (0,0) -- (10,0);
\draw [fill] (4,0) circle [radius=0.08];
\draw [fill] (3,1) circle [radius=0.08];
\draw [fill] (5,1) circle [radius=0.08];
\draw [fill] (2,2) circle [radius=0.08];
\draw [fill] (4,2) circle [radius=0.08];
\draw [fill] (7,2) circle [radius=0.08];
\draw [fill] (1,3) circle [radius=0.08];
\draw [fill] (3,3) circle [radius=0.08];
\draw [fill] (6,3) circle [radius=0.08];
\draw [fill] (8,3) circle [radius=0.08];
\draw [fill] (0,4) circle [radius=0.08];
\draw [fill] (2,4) circle [radius=0.08];
\draw [fill] (5,4) circle [radius=0.08];
\draw [fill] (6,4) circle [radius=0.08];
\draw [fill] (9,4) circle [radius=0.08];
\end{tikzpicture}
\end{centering}
\caption{An illustration of an element $r\in\overline{{\mathcal G}^N}.$}
\end{figure}
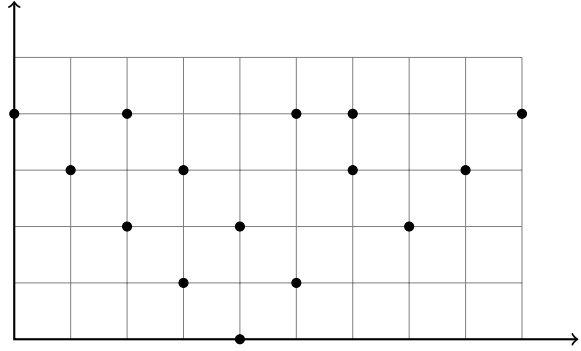
In \cite{W} the author defines a diffusion $R$ in $\overline{{\mathcal G}^N}$ through the system of SDEs
\eq\label{DBM}
\mathrm{d}R_{i}^{(k)}(t)=\mathrm{d}W^{(k)}_i(t)+\mathrm{d}L^{(k),+}_i(t)-\mathrm{d}L^{(k),-}_i(t),\quad 1\leq i\leq k\leq N,
\en 
equipped with the initial condition $R(0)=0\in\overline{{\mathcal G}^N}$ and 
entrance laws into $\overline{{\mathcal G}^N}$ whose probability densities are multiples of 
\eq\label{beta_corners}
\prod_{1\leq i<j\leq N} \big(r_j^{(N)}-r_i^{(N)}\big) 
\prod_{i=1}^N \exp\bigg(- \frac{\big(r_i^{(N)}\big)^2}{2t}\bigg),
\quad t>0.
\en
Here $L^{(k),\pm}_i$ are the local times accumulated at zero by the semimartingales $R_{i}^{(k)}-R^{(k-1)}_{i-1}$, $R_{i}^{(k-1)}-R_{i}^{(k)}$, respectively. The probability distributions given by \eqref{beta_corners} are of major importance in random matrix theory, as each of them describes the joint law of the eigenvalues of the top left $1\times1,\,2\times2,\,\ldots,\,N\times N$ submatrices of a (scaled) matrix from the Gaussian unitary ensemble (GUE). The diffusion $R$ is usually referred to as the \textit{multilevel Dyson Brownian motion}, or as the \textit{Warren process}. 
 
\medskip

{Write $X$ for $\big(R^{(k)}_i:\,1\le i\le k\le N-1\big)$ and $Y$ for $\big(R^{(N)}_i:\,1\le i\le N\big)$. It is clear that $X$ forms a multilevel Dyson Brownian motion in $\overline{{\mathcal G}^{N-1}}$. The main result of \cite{W} establishes that $Y$ is also a diffusion in its own filtration, namely an $N$-dimensional Dyson Brownian motion. Specifically, there exist independent standard Brownian motions $B_1,B_2,\ldots,B_N$ with respect to the filtration of $Y$ such that
\eq\label{DBM_SDE}
\mathrm{d}Y_j(t)=\sum_{l\neq j} \frac{1}{Y_j(t)-Y_l(t)}\,\mathrm{d}t+\mathrm{d}B_i(t),\quad j=1,2,\ldots,N.
\en
Moreover, the explicit description of the entrance laws through the formula \eqref{beta_corners} is used in \cite{W} to prove the intertwining of the semigroups of $X$ and $Y$.} 

\medskip

We show now that the process $R$ fits into the framework of our Theorem \ref{thm:reflect},  although we are unable to check the technical condition that an appropriate subset of $C_c^{\infty}(\overline{\mcal{G}^N})$ is a core for the domain of $R$. \comment{hence, the results of \cite{W} become corollaries of that more general result.}Indeed, consider $R(t)$, $t\ge t_0$ for some $t_0>0$. The state space of this process is
\[
D^{(N)}=\big\{r\in\overline{{\mathcal G}^N}:\;r^{(k)}_i<r^{(k)}_{i+1},\,1\le i<k\le N\big\},
\]
and we have the cross-sections
\[
D^{(N)}(y)=\big\{x\in D^{(N-1)}:\;y_1\le x^{(N-1)}_1\le y_2\le x^{(N-1)}_2\le\cdots\le x^{(N-1)}_{N-1}\le y_N \big\}
\]
for $y\in\rr^N$ with $y_1<y_2<\cdots<y_N$. The appropriate kernel $\Lambda$ for the case at hand turns out to be
\[
\Lambda(y,x)=\prod_{k=1}^{N-1} k!\,\prod_{1\le j<l\le N} (y_l-y_j)^{-1}\,\mathbf{1}_{D^{(N)}(y)}(x). 
\]
The stochasticity of $\Lambda$ can be checked by induction over $N$ relying on the identity
\[
\int_{y_1}^{y_2} \ldots \int_{y_{N-1}}^{y_N}\!
(N-1)!\!\prod_{1\leq i<m\leq N-1}\! \big(x^{(N-1)}_m-x^{(N-1)}_i\big)
\!\prod_{1\leq j<l\leq N}\! (y_l-y_j)^{-1}
\mathrm{d}x^{(N-1)}_1\ldots\mathrm{d}x^{(N-1)}_{N-1}=1.
\]
The latter integrand usually goes by the name \textit{Dixon-Anderson conditional probability density} and, in particular, its integral is known to be equal to $1$ (see, e.g., the introduction in \cite{Fo}). It is clear from the definitions that $\Lambda$ is positive and smooth on $D$, and that the corresponding operator $L$ maps $C_0(D^{(N-1)})$ to $C_0(\{y\in\rr^N:\,y_1<y_2<\cdots<y_N\})$.  

\medskip

{Next, we note that the submartingale problem associated with $R(t)$, $t\ge t_0$ is well-posed and that its solution is a Feller-Markov process, since any solution of it can be viewed as a reflected Brownian motion in $D^{(N)}$ and must therefore be given by the image of the driving Brownian motions under the appropriate (deterministic and Lipschitz) reflection map. Moreover, $\Lambda(\cdot,x)$ extends to the function $\tilde{\Lambda}(y)=\prod_{k=1}^{N-1} k!\,\prod_{1\le j<l\le N} (y_l-y_j)^{-1}$ and the latter satisfies ${\mathcal A}^Y\tilde{\Lambda}=0$ where ${\mathcal A}^Y$ is the generator of the Dyson Brownian motion $Y$ interpreted as a differential operator. We now obtain the representation \eqref{PDE_mod} via Remark \ref{PDE_simpl_rmk} after noting that here $({\mathcal A}^X)^*$ (interpreted as a differential operator) is one half times the Laplacian on $D^{(N)}(y)$, so that $({\mathcal A}^X)^*\Lambda(y,\cdot)=0$ on $D^{(N)}(y)$. It is also straightforward to check that both terms on the left-hand side of \eqref{LambdaNeumann} and the paranthesis on the right-hand side of \eqref{LambdaNeumann} vanish identically.

In order to check condition (iv) of Assumption \ref{Asmp3}, fix a $y \in \mathbb{R}^N$ satisfying $y_1 < \cdots < y_N$. Recall that when started from $y$, $Y$ can be viewed as an $h$-transform of a Brownian motion killed upon exiting the state space of $Y$ (see, e.g., Section 2.1 in \cite{Bi2}). We recognize $\frac{\tilde{\Lambda}(Y(t))}{\tilde{\Lambda}(y)}$ as the density of the law of the killed Brownian motion on $[0,t]$ with respect to the law of Dyson Brownian motion on $[0,t]$. Denote the law of the killed Brownian motion started from $y$ as $\tilde{\mathbb{P}}_y$. Define $V(x) = \prod_{1 \le j < l \leq N}|x_l - x_j|$ and define $\tau$ as the first time $Y_i(t) = Y_{i+1}(t)$ for some $i=1,\ldots,N-1$. Fix some small $\epsilon >0$ and note
\begin{equation}\label{DBMest}
\begin{split}
\mathbb{E}_y[\tilde{\Lambda}(Y(t))^{1+\epsilon}] &= C\tilde{\Lambda}(y) \tilde{\mathbb{E}}_y[V(Y(t))^{-\epsilon}\mathbf{1}_{\{\tau > t\}}]
\\ &\leq C_y \mathbb{E}[V(B(t)+y)^{-\epsilon}]
\\ &\leq C_{y,N}\sum_{j \neq i}\mathbb{E}\left[|B_i(t)-B_j(t) - y_i + y_j|^{-\epsilon \frac{N(N-1)}{2}}\right]\\
&\leq \tilde{C}_{y,N}\sum_{j \neq i}\mathbb{E}\left[|B_i(t)-B_j(t)|^{-\epsilon\frac{N(N-1)}{2}} \right] + \tilde{C}_{y,N},
\end{split}
\end{equation}
where $B$ is a standard Brownian motion. We have used the AM-GM inequality and the bound $(\sum_{i=1}^n |a_i|)^p \leq n^{p-1}\sum_{i=1}^n |a_i|^p$ for the second inequality. Up to a factor of $t^{-\frac{\epsilon}{2}\frac{N(N-1)}{2}}$, we may replace $B(t)$ by a standard Gaussian vector in the bottom expression in \eqref{DBMest}. This expectation is readily checked to be finite for small enough $\epsilon$, and so we have checked condition (iv).

At this point, up to checking that the intersection of $C_c^{\infty}(D^{(N)})$ with the domain of $R$ is a core for the domain of $R$, we may apply Theorem \ref{thm:reflect} to obtain $R=Y\iprod{L}X$ on $[t_0,\infty)$. In particular, we recover the results of \cite{W} by taking the limit $t_0\downarrow0$.}

%%%%%%%%%%%%%%%%%%%%%%%%%%%%%%%
\subsection{$\sigma$-finite kernels}
%%%%%%%%%%%%%%%%%%%%%%%%%%%%%%%

{In this subsection, we explain how the kernel of the previous subsection can be obtained by combining suitable $\sigma$-finite kernels via the procedure described in Theorem \ref{thm:combine}. Let ${\mathcal A}^X$ be the generator of the process $X:=\big(R^{(k)}_i:\;1\le i\le k\le N-1\big)$ defined in the previous subsection. In other words, ${\mathcal A}^X$ is one half times the Laplacian on $D^{(N-1)}$, endowed with Neumann boundary conditions dictated by \eqref{DBM}. In addition, abbreviate $\frac{1}{2}\,\frac{\mathrm{d}^2}{\mathrm{d}y_i^2}$ by ${\mathcal A}^{Y_i}$ for $i=1,\,2,\,\ldots,\,N$ and define the regions
\[
\begin{split}
& D_1^{(N)}(y_1)=\big\{x\in D^{(N-1)}:\;x^{(N-1)}_1 \ge y_1 \big\}, \\
& D_i^{(N)}(y_i)=\big\{x\in D^{(N-1)}:\;x^{(N-1)}_{i-1} \le y_i \le x^{(N-1)}_i \big\}\quad\text{for}\quad i=2,\,3,\,\ldots,\,N-1, \\
& D_N^{(N)}(y_N)=\big\{x\in D^{(N-1)}:\;x^{(N-1)}_{N-1} \le y_N \big\}.
\end{split}
\]
Then, for each $i=1,\,2,\,\ldots,\,N$, the $\sigma$-finite kernel $\Lambda_i(y_i,x)=\mathbf{1}_{D_i^{(N)}(y_i)}(x)$ trivially satisfies $({\mathcal A}^X)^*\Lambda_i={\mathcal A}^{Y_i}\Lambda_i$ on $\cup_{y_i} \big(\{y_i\}\times D_i^{(N)}(y_i)\big)$ in the classical sense (with $({\mathcal A}^X)^*$ being interpreted as a differential operator).} 

\medskip

Next, combine the $\sigma$-finite kernels $\Lambda_i$, $i=1,\,2,\,\ldots,\,N$ according to the recipe of Theorem \ref{thm:combine} to obtain the finite kernel 
\[ 
\prod_{i=1}^N \mathbf{1}_{D_i^{(N)}(y_i)}(x)=\mathbf{1}_{D^{(N)}(y)}(x)
\]
where $D^{(N)}(y)$ is defined as in the previous subsection.\hspace{-2.5pt} Theorem \ref{thm:combine} suggests that the normalizing function
\[
\tau(y):=\int_{D^{(N-1)}} \mathbf{1}_{D^{(N)}(y)}(x)\,\mathrm{d}x
\]
should be harmonic for $\sum_{i=1}^N {\mathcal A}^{Y_i}=\frac{1}{2}\,\Delta_y$. Indeed, as in the previous subsection one finds 
\[
\tau(y)=\bigg(\prod_{k=1}^{N-1} k!\bigg)^{-1}\prod_{1\le j<l\le N} (y_l-y_j)\,\mathbf{1}_{\{y:\,y_1<y_2<\cdots<y_N\}},
\]
and the latter function is harmonic for $\frac{1}{2}\,\Delta_y$ on $\{y:\,y_1<y_2<\cdots<y_N\}$. The corresponding $h$-tranform of $\frac{1}{2}\,\Delta_y$ gives rise to the generator of the $N$-dimensional Dyson Brownian motion $Y$ from \eqref{DBM_SDE} (see, e.g., Section 2.1 in \cite{Bi2} for more details). It remains to observe that the normalized kernel $\frac{\mathbf{1}_{D^{(N)}(y)}(x)}{\tau(y)}$ is precisely the stochastic kernel employed in the previous subsection.
 
\medskip 

%% Dropped the rest to reduce length
\comment{One might think that it could be possible to obtain the multilevel Dyson Brownian motion with $N$ levels from the multilevel Dyson Brownian motion with $N-1$ levels by combining intertwinings with $N$ suitable one-dimensional diffusions into a simultaneous intertwining, thereby circumventing the use of $\sigma$-finite kernels. In the following we explain that already for $N=2$ this is \textit{not} the case. When $N=2$, the process $R=(R^{(1)}_1,R^{(2)}_1,R^{(2)}_2)$ consists of two building blocks: $(R^{(1)}_1,R^{(2)}_1)$ given by a standard Brownian motion reflected downwards on an indepedent standard Brownian motion and $(R^{(1)}_1,R^{(2)}_2)$ given by a standard Brownian motion reflected upwards on an independent standard Brownian motion. Consider the second building block, for example, and recall that it solves the system of SDEs    
\eq\label{BuBl}
\mathrm{d}R^{(1)}_1(t)=\mathrm{d}W^{(1)}_1(t), \quad
\mathrm{d}R^{(2)}_2(t)=\mathrm{d}W^{(2)}_2(t)+\mathrm{d}L^{(2),+}_2(t). 
\en

If the diffusion $(R^{(1)}_1,R^{(2)}_2)$ could be realized as an intertwining of a standard Brownian motion with a suitable one-dimensional diffusion with semigroup $(Q_t)$, then the transition semigroup of $(R^{(1)}_1,R^{(2)}_2)$ would be given by
\[
\varphi_t(x_1-x_0)\,\frac{Q_t(y_0,y_1)\,\Lambda(y_1,x_1)}{\int_\rr Q_t(y_0,\mathrm{d}\tilde{y}_1)\,\Lambda(\tilde{y}_1,x_1)},\quad t>0
\]
(see \eqref{Z_transition}) where $\varphi_t$ is the Gaussian density with mean $0$ and variance $t$, and $\Lambda$ is the intertwining kernel. On the other hand, the transition densities of the solution to \eqref{BuBl} are computed in Proposition 8 of \cite{W} to
\[
\varphi_t(x_1-x_0)\,\varphi_t(y_1-y_0)-\varphi_t'(y_1-x_0)\,\Phi_t(x_1-y_0),\quad t>0
\]
where $\Phi_t$ is the Gaussian cumulative distribution function with mean $0$ and variance $t$. Equating the two expressions and rearranging one obtains
\[
\frac{\varphi_t'(y_1-x_0)\,\Phi_t(x_1-y_0)}{\varphi_t(x_1-x_0)}
=\varphi_t(y_1-y_0)-\frac{Q_t(y_0,y_1)\,\Lambda(y_1,x_1)}{\int_\rr Q_t(y_0,\mathrm{d}\tilde{y}_1)\,\Lambda(\tilde{y}_1,x_1)},\quad t>0.
\]
The left-hand side of the latter equation depends on $x_0$ and the right-hand side does not, resulting in a contradiction. 

We end the subsection by remarking that in \cite{SW}, \cite{STW} the authors did construct an intertwining bearing resemblance to the system of SDEs \eqref{BuBl}. The SDE for $R^{(1)}_1$ is replaced therein by the SDE of a Brownian bridge ending at $0$, say, $R^{(1)}_1(0)$ is chosen according to the standard normal distribution, and $R^{(2)}_2$ is an independent standard Brownian motion reflected upwards on the bridge if $R^{(2)}_2(0)>R^{(1)}_1(0)$ and downwards if $R^{(2)}_2(0)<R^{(1)}_1(0)$. The main result of \cite{SW}, \cite{STW} states that $R^{(2)}_2$ is a standard Brownian motion in its own filtration. Since the Brownian bridge is a time-inhomogeneous Markov process, to put this example into the context of intertwining one needs to consider a time-dependent intertwining kernel. Such an extension of our theory is beyond the scope of the present paper, and we leave it for future research.} 

%% Drop the next section since it is underdeveloped and it will reduce length

\comment{
%%%%%%%%%%%%%%%%%%%%%%%%%%%%%%%%%%
\section{Intertwining and convergence rate of diffusion semigroups}\label{sec:rates}
%%%%%%%%%%%%%%%%%%%%%%%%%%%%%%%%%%

In this section, we show how intertwinings can be used to turn questions about the rate of convergence of diffusion semigroups into controllability questions for equations of the form \eqref{PDE}. We measure the distance to stationarity using the continuous space version of the Aldous-Diaconis separation distance
\eq
d(\alpha,\nu):=\sup_{A\subset{\mathcal X}} \bigg(1-\frac{\alpha(A)}{\nu(A)}\bigg).
\en
Here the supremum is taken over all Borel subsets of ${\mathcal X}$. We expect the following theorem to be particularly useful in the case of non-reversible diffusions, in which very few methods for the study of convergence rates are available.

\begin{thm}\label{conv_thm}
In the setting of Theorem \ref{main1} suppose that $X$ has an invariant distribution $\nu$ and that the density of $\nu$ is given by $\Lambda(y^*,\cdot)$ for some $y^*\in\mcal{Y}$. Fix a $y\in\mcal{Y}$, let $\alpha$
be the probability measure on $\mcal{X}$ with density $\Lambda(y,\cdot)$, and write $\tau_{y^*}$ for the first hitting time of $y^*$ by $Y$ when started at $y$. Then,
\eq\label{mixing_bnd}
d((\alpha P_t),\nu)\leq \pp(\tau_{y^*}\geq t), \quad t\ge 0.
\en  
\end{thm}

\begin{proof}
{Consider the diffusion $Z$ of Theorem \ref{main1} with $Z_1(0)$ distributed according to $\alpha$ and $Z_2(0)=y$. Recall that $Z_2\stackrel{d}{=}Y$ and, with a slight abuse of notation, write $\tau_{y^*}$ for the first hitting time of $y^*$ by $Z_2$. Next, fix some $0\le s\le t<\infty$. The regular conditional distribution of $Z_1(s)$ given $\tau_{y^*}=s$ is $\nu$ thanks  to condition \eqref{i4} in Definition \ref{def:intertwin} and the fact that the density of $\nu$ is $\Lambda(y^*,\cdot)$. Putting this together with the Markov property of $Z$, condition \eqref{i3} in Definition \ref{def:intertwin}, and the invariance of $\nu$ (recall that $Z_1\stackrel{d}{=}X$) one concludes further that the regular conditional distribution of $Z_1(t)$ given $\tau_{y^*}=s$ is $\nu$. Writing $\chi$ for the law of $\tau_{y^*}$ one therefore finds
\eq\label{ADtrick}
\pp(Z_1(t)\in A)\geq \int_0^t \pp(Z_1(t)\in A\,|\,\tau_{y^*}=s)\,\mathrm{d}\chi(s)
=\nu(A)\,\pp(\tau_{y^*}\leq t)=\nu(A)\,(1-\pp(\tau_{y^*}>t))
\en
for any Borel $A\subset\mcal{X}$. The theorem readily follows.}
\end{proof}

\begin{rmk}
The computation \eqref{ADtrick} in the case of finite state space Markov chains is due to Aldous and Diaconis and can be found in the proof of Proposition 3.2 (a) in \cite{AD}.
\end{rmk}

To understand the order of magnitude of the upper bound \eqref{mixing_bnd} take, for example, ${\mathcal A}^Y=\frac{1}{2}\,\partial_y^2-\kappa\,\partial_y$ for some $\kappa\ge 0$, so that $Y$ is a Brownian motion with drift $-\kappa$. Then, with $y^*=0$ and some fixed $y>0$, \eqref{mixing_bnd} leads to
\eq\label{mixing_bnd2}
d((\alpha P_t),\nu)\leq\int_t^\infty \frac{y}{\sqrt{2\pi s^3}}\exp\bigg(-\frac{(y-\kappa s)^2}{2s}\bigg)\,\mathrm{d}s
\leq e^{\kappa y}\,y\,\sqrt{\frac{2}{\pi t}}\,e^{-\kappa^2 t/2}.
\en
To get the first inequality we have simply inserted the explicit formula for the first passage time of a Brownian motion with drift (see, e.g., Section 3.5.C in \cite{KS}) into \eqref{mixing_bnd}, and the second inequality has been obtained by the following chain of elementary estimates:
\[
\begin{split}
& \int_t^\infty \frac{y}{\sqrt{2\pi s^3}}\exp\bigg(-\frac{(y-\kappa s)^2}{2s}\bigg)\,\mathrm{d}s
\leq \frac{e^{\kappa y}\,y}{\sqrt{2\pi}} \int_t^\infty s^{-3/2}\,e^{-\kappa^2 s/2}\,\mathrm{d}s \\
& = \frac{e^{\kappa y}\,y}{\sqrt{2\pi}}\bigg(2\,\sqrt{2\pi}\,\kappa\,\Phi\big(\kappa\sqrt{t}\big)-2\,\sqrt{2\pi}\,\kappa+\frac{2}{\sqrt{t}}\,e^{-\kappa^2 t/2}\bigg)
\leq e^{\kappa y}\,y\,\sqrt{\frac{2}{\pi t}}\,e^{-\kappa^2 t/2}.
\end{split}
\]
Here $\Phi$ is the standard Gaussian cumulative distribution function. We illustrate our findings on a concrete example. 

\begin{exm} 
{Let $X$ be a reflected Brownian motion on $[0,1]$, that is, a process evolving as a standard Brownian motion while in $(0,1)$ and instantaneously reflecting when it hits $0$ or $1$. It is well-known that $X$ is positive recurrent and that its invariant distribution $\nu$ is given by the uniform distribution on $[0,1]$. Consider the initial conditions $\alpha_{p,c}$, $p\in\nn$, $c\in\big[0,\frac{1}{2}\big)$ with densities $1+\frac{1}{2}\,\sin(2\pi pc)\,\cos(2\pi px)$, $p\in\nn$, $c\in\big[0,\frac{1}{2}\big)$, respectively. In order to estimate the rate of convergence to stationarity from these initial conditions, we let $Y$ be a standard Brownian motion on a circle of length $1$ parametrized by $[0,1)$ and introduce the kernels
\[
\Lambda_p(y,x):=1+\frac{1}{2}\,\sin(2\pi py)\,\cos(2\pi px),\quad p\in\nn.
\]
Clearly, $\Lambda_p(0,x)=1$ and $\Lambda_p(c,x)=1+\frac{1}{2}\,\sin(2\pi pc)\,\cos(2\pi px)$. Moreover, it is not hard to check that each kernel $\Lambda_p$ is stochastic and solves \eqref{PDE} for the generators of $X$ and $Y$, with both sides being equal to $-\pi^2p^2\sin(2\pi py)\,\cos(2\pi px)$ (note that the generator of $X$ is one half times the Neumann second derivative operator on $[0,1]$ and that the generator of $Y$ is one half times the second derivative operator on $[0,1)$). In addition, for every fixed $p\in\nn$, one can verify the other conditions in Theorem \ref{main1} by using the smoothness of $\Lambda_p$ and the boundedness of $\Lambda_p$, $\partial_y\log\Lambda_p$,  and ${\mathcal A}^Y\Lambda_p$. Consequently, we can apply Theorem \ref{conv_thm} to get the upper bound
\[
d((\alpha_{p,c}P_t),\nu)\leq \pp(\tau_0\geq t), \quad t\ge 0
\]
where $\tau_0$ is the first hitting time of $0$ by $Y$ when started at $c$. The latter can be bounded above further by the rightmost expression in \eqref{mixing_bnd2} with $\kappa=0$ and $y=c$, that is, $c\,\sqrt{\frac{2}{\pi t}}$.}  
\end{exm}
}

%%%%%%%%%%%%%%%%%%%%%%%%%%%%%%%
\appendix
\section{Some solutions of hyperbolic PDEs} 
%%%%%%%%%%%%%%%%%%%%%%%%%%%%%%%

Theorem \ref{main1} shows, in particular, that classical solutions of \eqref{PDE} (with $({\mathcal A}^X)^*$ and ${\mathcal A}^Y$ being interpreted as differential operators) give rise to intertwinings of diffusions, provided they are stochastic and have the appropriate boundary behavior. In this appendix, we have therefore collected some known explicit formulas for classical solutions of hyperbolic PDEs as in \eqref{PDE}, as well as some general existence results for such PDEs.  

\begin{exm}[Classical wave equations] 
We start with the simplest example of ${\mathcal A}^X=\partial_x^2$ on $\rr$ and ${\mathcal A}^Y=\Delta_y$ on $\rr^n$ (the case of ${\mathcal A}^X=\Delta_x$ on $\rr^m$ and ${\mathcal A}^Y=\partial_y^2$ on $\rr$ being analogous). The equation \eqref{PDE} is then the classical wave equation
\eq\label{wave}
\partial_x^2\,\Lambda=\Delta_y\,\Lambda. 
\en
When $n=1$, all classical solutions of \eqref{wave} can be written as 
\[
\phi(y-x)+\psi(y+x)
\] 
thanks to the well-known d'Alembert's formula. When $n\geq2$, the classical solutions of \eqref{wave} are given by the following formulas (see, e.g., Section 2.4 in \cite{Ev}): 
\[
\partial_x\bigg(\frac{1}{x}\,\partial_x\bigg)^{\frac{n-3}{2}}
\bigg(\frac{1}{x}\int_{\partial B(y,x)} \phi(\tilde{y})\,\mathrm{d}\theta(\tilde{y})\bigg)
+\bigg(\frac{1}{x}\,\partial_x\bigg)^{\frac{n-3}{2}}
\bigg(\frac{1}{x}\int_{\partial B(y,x)} \psi(\tilde{y})\,\mathrm{d}\theta(\tilde{y})\bigg)
\]
if $n$ is odd, and
\[
\begin{split}
\partial_x\bigg(\frac{1}{x}\,\partial_x\bigg)^{\frac{n-2}{2}}
\bigg(\int_{B(y,x)} \frac{\phi(\tilde{y})}{(x^2-|\tilde{y}-y|^2)^{1/2}}\,\mathrm{d}\tilde{y}\bigg) 
+\bigg(\frac{1}{x}\,\partial_x\bigg)^{\frac{n-2}{2}}
\bigg(\int_{B(y,x)} \frac{\psi(\tilde{y})}{(x^2-|\tilde{y}-y|^2)^{1/2}}\,\mathrm{d}\tilde{y}\bigg)
\end{split}
\]
if $n$ is even. Here $B(y,x)$ is the ball of radius $x$ around $y$, $\partial B(y,x)$ is its boundary, and $\theta$ is the Lebesgue measure on $\partial B(y,x)$. 
\end{exm}    

\begin{exm}[Divergence form operators] 
{Next, we consider the situation where ${\mathcal A}^X=\frac{1}{v(x)}\,\partial_x\,v(x)\,\partial_x$ for some $v>0$ on an interval in $\rr$ and ${\mathcal A}^Y = \partial_y^2$ on $\rr$. Note that, if $v$ is continuously differentiable, the diffusion $X$ corresponding to ${\mathcal A}^X$ is well-defined provided it does not explode, and in the case of non-explosion it is reversible with respect to the measure $v(x)\,\mathrm{d}x$. In this situation, classical solutions of \eqref{PDE} can be obtained by a procedure described in \cite{Ca} and the references therein. Consider eigenfunctions 
\[
{\mathcal A}^X\,\phi_\lambda=\lambda\,\phi_\lambda,\quad
{\mathcal A}^Y\,\psi_\lambda=\lambda\,\psi_\lambda 
\]
where $\lambda$ varies over the set of eigenvalues of ${\mathcal A}^X$. Then, superpositions of the functions $v(x)\,\phi_\lambda(x)\,\psi_\lambda(y)$ for varying values of $\lambda$ are classical solutions of \eqref{PDE}. One case, in which this procedure leads to explicit solutions, is that of $v(x)=x^{2\nu+1}$ and ${\mathcal A}^X=\partial_{xx}+\frac{2\nu+1}{x}\,\partial_x$ on $(0,\infty)$ where $\nu\ge0$. In this case, one can let $\lambda$ vary in $(-\infty,0]$ and choose each $\phi_\lambda$ as a linear combination of $x^{-\nu}\,J_\nu\big(-\sqrt{-\lambda}\,x\big)$ and $x^{-\nu}\,Y_\nu\big(-\sqrt{-\lambda}\,x\big)$ and each $\psi_\lambda$ as a linear combination of $\sin\big(\sqrt{-\lambda}\,y\big)$ and $\cos\big(\sqrt{-\lambda}\,y)$ where $J_\nu$ and $Y_\nu$ are Bessel functions of the first and second kind, respectively. Another formula for classical solutions of \eqref{PDE} in the same case, which is more amenable to the selection of positive solutions, has been given earlier in \cite{De} and reads
\[
\int_0^\pi \phi\big(\sqrt{x^2+y^2-2xy\cos\alpha}\big)(\sin\alpha)^{2\nu}\,\mathrm{d}\alpha.
\]
Note that the latter function is positive as soon as $\phi$ is positive.} 

%Other interesting cases are $v(y)=(e^y-e^{-y})^{2\nu+1}(e^y+e^{-y})^{2\mu+1}$ and $v(y)=(\mathrm{sech}\,y)^{2\nu+1}$. In the first case, eigenfunctions of ${\mathcal A}^Y$ are given by the formula
%\eq
%\psi_\lambda(y)=F((\nu+\mu+1+i\lambda)/2,(\nu+\mu+1-i\lambda)/2,\nu,-({\mathrm sech}\,y)^2),
%\en
%where $F$ is the hypergeometric function (see section 2.2 in \cite{Ca2}). In the second case the eigenfunctions of ${\mathcal A}^Y$ are 
%\eq
%\psi_\lambda(y)=2^\nu\,\Gamma(\nu+1)\,({\mathrm sech}\,y)^{-\nu}\,P^{-\nu}_{i\lambda-1/2}({\mathrm csch}\,y),
%\en
%where $P^{-\nu}_{i\lambda-1/2}$ is the Legendre function of the first kind (see Example 2.6 in chapter 2 of \cite{Ca2}). The associated link functions $\Lambda$ can be then obtained by using \eqref{link_unexpl}. 
\end{exm} 

\begin{exm}[Euler-Poisson-Darboux equation] 
Now, consider the case ${\mathcal A}^X=\Delta_x$, ${\mathcal A}^Y=\partial_y^2+\frac{2\nu+1}{y}\,\partial_y$. In this case, the equation \eqref{PDE} is known as the Euler-Poisson-Darboux (EPD) equation. While particular solutions of this equation go back to Euler and Poisson, a full understanding of the Cauchy problem for the EPD equation with initial conditions $\Lambda(0,x)=f(x)$, $(\partial_y\Lambda)(0,x)=0$ has been achieved more recently in \cite{As}, \cite{We}, \cite{DW}, and \cite{We2}. The following summary of their results is taken from the introduction of \cite{Bl}. When $2\nu+1=m-1$, the solution reads 
\eq\label{EPD1}
\frac{1}{c_{m-1}}\int_{\partial B(0,1)} f(x+y\tilde{x})\,\mathrm{d}\theta(\tilde{x})
\en  
where $c_{m-1}$ is the volume of the $(m-1)$-dimensional unit sphere $\partial B(0,1)$ and $\theta$ is the Lebesgue measure on the latter. When $2\nu+1>m-1$, the solution is
\eq\label{EPD2}
\frac{c_{2\nu+2-m}}{c_{2\nu+2}}\int_{B(0,1)} f(x+y\tilde{x})(1-|\tilde{x}|^2)^{\nu-m/2}\,\mathrm{d}\tilde{x}
\en
where $B(0,1)$ is the $m$-dimensional unit ball. Finally, when $0<2\nu+1<m-1$, the solution is given by
\eq\label{EPD3}
y^{-2\nu}\bigg(\frac{1}{y}\,\partial_y\bigg)^q y^{2\nu+2q}\,\tilde{\Lambda}(y,x)
\en 
where $\tilde{\Lambda}(y,x)$ is the solution of the EPD equation with $2\nu+1$ replaced by $2\nu+2q+1$, {$f$ replaced by $\frac{f}{(2\nu+2)(2\nu+4)\cdots(2\nu+2q)}$, and $q\in\nn$ such that $2\nu+2q+1\ge m-1$}. 
\end{exm}

We supplement the explicit solutions above by some general existence results for equations of the type \eqref{PDE} taken from Section 7.2 in \cite{Ev}.  

\begin{prop}
Suppose the coefficients of ${\mathcal A}^X$ and ${\mathcal A}^Y$ are smooth. Then, in each of the following cases classical solutions of the equation \eqref{PDE} exist.  
\begin{enumerate}[(a)]
\item $m=1$, ${\mathcal A}^X=\partial_x^2$, $n$ is arbitrary, and ${\mathcal A}^Y$ is uniformly elliptic.
\item $m$ is arbitrary, ${\mathcal A}^X$ is uniformly elliptic, $n=1$, and ${\mathcal A}^Y=\partial_y^2$. 
\end{enumerate}
\end{prop}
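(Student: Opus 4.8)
The plan is to read the PDE \eqref{PDE} as a classical second-order hyperbolic equation in which one of the two coordinate blocks plays the role of a time variable, and then to invoke the existence and regularity theory for such equations developed in section 7.2 of \cite{Ev}. Recall that the object treated there is an equation $u_{tt}+Lu=f$ on $(0,T)\times U$, with $U$ a bounded domain with smooth boundary and $L$ a second-order operator which is uniformly elliptic in the convention that its principal part has the form $-\sum_{i,j}a^{ij}\partial_i\partial_j$ with $\sum_{i,j}a^{ij}\xi_i\xi_j\geq\theta|\xi|^2$; prescribing Cauchy data $u(0,\cdot)=g$, $u_t(0,\cdot)=h$ and, say, homogeneous Dirichlet data on $(0,T)\times\partial U$, one obtains a unique weak solution, which is promoted to a classical (indeed $C^\infty$) solution once the coefficients, the data and $\partial U$ are smooth. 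It remains to cast \eqref{PDE} into this template in each case.

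In case (a) the operator ${\mathcal A}^X=\partial_{xx}$ is formally self-adjoint with no lower-order terms, so $({\mathcal A}^X)^*=\partial_{xx}$ and \eqref{PDE} reads $\partial_{xx}\Lambda={\mathcal A}^Y\Lambda$. Taking $x$ as the time variable this is $\partial_{xx}\Lambda+L\Lambda=0$ with $L:=-{\mathcal A}^Y$. Writing $-{\mathcal A}^Y$ in divergence form (possible since we take the coefficients of \eqref{Ygen} smooth), its principal coefficients are $\tfrac12\rho_{kl}$, and the remaining terms --- those coming from the drift $\gamma_k$ and from differentiating $\rho_{kl}$ --- are first- and zero-order perturbations of the kind allowed in \cite{Ev}; uniform ellipticity of \eqref{Ygen} means precisely that $\rho$ is uniformly positive definite, hence that $L$ is uniformly elliptic in the sign convention above. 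Fixing smooth, compactly supported Cauchy data $\Lambda(x_0,\cdot)$, $\partial_x\Lambda(x_0,\cdot)$ at some $x_0\in{\mathcal X}$ (together with whatever homogeneous boundary condition we wish to impose on $\partial{\mathcal Y}$), the theory of \cite{Ev} furnishes a weak solution on a slab $(x_0-\delta,x_0+\delta)\times{\mathcal Y}$, which the regularity results of that section make into a $C^2$ solution of $\partial_{xx}\Lambda={\mathcal A}^Y\Lambda$. This is the required classical solution of \eqref{PDE}.

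Case (b) is symmetric, with the roles of $x$ and $y$ exchanged and $y$ now playing the part of time. Since ${\mathcal A}^Y=\partial_{yy}$, \eqref{PDE} reads $\partial_{yy}\Lambda=({\mathcal A}^X)^*\Lambda$; the formal adjoint $({\mathcal A}^X)^*$ of \eqref{Xgen} has principal part $\tfrac12\sum_{i,j}a_{ij}\partial_{x_i}\partial_{x_j}$, i.e.\ the same (by hypothesis uniformly positive definite) symbol as ${\mathcal A}^X$, its first- and zero-order terms arising from differentiating $a_{ij}$ and $b_i$. Hence $\partial_{yy}\Lambda+\widetilde L\Lambda=0$ with $\widetilde L:=-({\mathcal A}^X)^*$ uniformly elliptic in the sense above, and the same invocation of the existence theorem and the regularity upgrade of section 7.2 of \cite{Ev} produces a classical solution.

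The only genuine --- though minor --- obstacle is the gap between the hypotheses of \cite{Ev} (bounded spatial domain with smooth boundary, homogeneous boundary data) and the possibly unbounded, possibly merely ``smooth'' domains ${\mathcal X}$, ${\mathcal Y}$ here; this is dealt with in the usual way, by exhausting the spatial domain by bounded smooth subdomains, solving on each, and using the finite speed of propagation characteristic of hyperbolic equations to glue the local solutions together. No nonnegativity of $\Lambda$ is asserted, so the delicate parts of the theory do not enter; the remaining points --- the precise form of the formal adjoint, the placement of lower-order terms inside the framework of \cite{Ev}, and the bootstrap from weak to classical regularity --- are routine.
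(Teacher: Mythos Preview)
Your proposal is correct and matches the paper's approach: the paper gives no proof at all for this proposition, simply stating it as ``a set of general existence results \ldots\ taken from section 7.2 in \cite{Ev}.'' You have supplied exactly the reduction the reader would carry out --- identifying the one-dimensional variable as time, observing that the remaining operator (respectively its formal adjoint) has uniformly elliptic principal part, and invoking the weak-solution existence and regularity bootstrap from Evans --- so your write-up is in fact more detailed than the paper itself.
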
 

To the best of our knowledge, conditions for positivity of these solutions have not been studied in this generality.

%%%%%%%%%%%%%%%%%%%%%%%%%%%%%%%wqw
\section{A result about $C^1$ functions of Semimartingales} 
%%%%%%%%%%%%%%%%%%%%%%%%%%%%%%%
Since $\mcal{Y}$ is a locally compact subset of $\mathbb{R}^n$ it can be expressed as $\mcal{Y} = O \cap \overline{\mcal{Y}}$ where $O$ is open. When we write $C^m(\mcal{Y})$, we mean restrictions of $C^m(O)$ functions to $\mcal{Y}$ for some $O$ such that $\mcal{Y} = O \cap \overline{\mcal{Y}}$ holds.
\begin{lemma}\label{lowreggito}
Let $Y(t) = Y(0) + M(t) + A(t)$ be a continuous semimartingale taking values in a locally compact state space $\mcal{Y} \subseteq \mathbb{R}^n$ with (vector) local martingale part $M$ and bounded variation part $A$. Let $f \in C^1(\mcal{Y})$ be a function such that $f(Y)$ is a semimartingale with local martingale part $N$. Then, we have the equality
\begin{equation}\label{lowregito}
N(t) = \sum_{j=1}^n \int_0^t \partial_j f(Y(s))\,\mathrm{d}M^j(s).
\end{equation}
\end{lemma}
\begin{proof}
It is easily seen (e.g., \cite[Proposition 3.2.24]{KS}) that the right-hand side of (\ref{lowregito}) is the unique continuous local martingale $R$ such that the following equality holds for all continuous local martingales $P$:
\begin{equation}\label{covidentity}
\langle R, P \rangle_t =\sum_{j=1}^n \int_0^t \partial_j f(Y(s)) \,\mathrm{d} \langle M^j, P \rangle(t).
\end{equation}
Therefore, it suffices to show that $N$ has this property. Fix $t >0$ and consider a mesh $\mathbf{t} = (t_0,\ldots,t_T)$  with $0=t_0< t_1 < \ldots < t_T=t$ with maximum mesh size $\Delta : =\max_{k=0,\ldots,T-1}(t_{k+1} - t_{k})$. Then, by standard arguments (see, e.g., \cite[Proposition IV.1.18]{RY}),
\[
\lim_{\Delta \downarrow 0} \sum_{k=0}^{T-1} \big(f(Y(t_{k+1})) - f(Y(t_k)) \big)\big(P(t_{k+1})-P(t_k)\big) = \langle N,P \rangle_t,
\]
where the limit is understood as a limit in probability. We now proceed to calculate the limit explicitly.

Fix an open set $O$ such that $\mcal{Y} = O \cap \overline{\mcal{Y}}$ and such that $f \in C^1(O)$. Define a sequence of compact subsets of $O$ as $K_p = \{ y \in O : |y| \leq p, \text{dist}(y, \partial O ) \geq \frac{1}{p} \}$, $ p\in \mathbb{N}$. Also, define the events 
\[
E(\mathbf{t},p,\delta) :=\Big\{Y([0,t]) \subseteq K_p, \max_{k=0,...,T-1}|Y(t_{k+1})-Y(t_k)| < \delta\Big\}.
\]
There exists a finite set of points $y^1,\ldots,y^{\kappa(p)}$ such that $\{ B(y^l,\frac{1}{4p}) \}_{l=1}^{\kappa(p)}$ is an open cover of $K_p$. This open cover admits a Lebesgue number $\lambda_p$. Note that on the event $E(\mathbf{t},p,\delta)$ with $\delta < \frac{\lambda_p}{2}$, which we assume throughout, we have that
\[
\big\{ \lambda Y(t_{k+1}) + (1-\lambda) Y(t_k) :\, k=0,\ldots,T-1,\, \lambda \in [0,1]\big\} \subseteq \Big\{y \in \mathbb{R}^n : |y| \leq p, \text{dist}(y, \partial O) \geq \frac{1}{2p} \Big\}.
\]
Denote the set on the right-hand side above as $\tilde{K}_p$. On the event $E(\mathbf{t},p,\delta)$, by the Mean Value Theorem, there exists a random variable $Z_k \in \tilde{K}_p$ which is a (random) convex combination of  $Y({t_{k+1}} )$ and $ Y(t_k) $ such that $f(Y(t_{k+1})) - f(Y(t_k)) = \nabla f(Z_k)' (Y(t_{k+1}) - Y(t_k))$.
For any continuous process $X$, write $\delta_k X = X(t_{k+1}) - X(t_k)$. Then, we first note that on the event $E(\mathbf{t},p,\delta)$,
\[
\sum_{k=0}^{T-1} \delta_k f(Y) \delta_k P= \sum_{k=0}^{T-1} \nabla f(Z_k)'\delta_k M\, \delta_k P + \sum_{k=0}^{T-1}\nabla f(Z_k)' \delta_k A \,\delta_k P.
\]
Using the facts that $A$ is continuous with finite variation, $P$ is continuous, and $\nabla f$ is bounded on compact sets, arguments as in \cite[Proposition IV.1.18]{RY} show that on the event $E(\mathbf{t},p,\delta)$, the second term above converges to $0$ in probability as $\Delta \downarrow 0$. Also, note that 
\[
\lim_{p \rightarrow \infty} \limsup_{\delta \downarrow 0} \limsup_{\Delta\downarrow 0} \mathbb{P}(E(\mathbf{t},p,\delta)^c) = 0.
\]
Next, since $\nabla f$ is uniformly bounded and uniformly continuous on $\tilde{K}_p$, because 
\[ 
\sum_{k=0}^{T-1}(\delta_k M^j)^2\rightarrow \langle M^j \rangle_t\text{,}\hspace{4pt} \sum_{k=0}^{T-1}(\delta_k P)^2\rightarrow \langle P \rangle_t\hspace{6pt}\text{in probability,}
\]
and by the Cauchy-Schwarz inequality, we know that 
\[
\sum_{k=0}^{T-1}\big(\nabla f(Z_k) - \nabla f(Y(t_k))\big)'\delta_k M \,\delta_k P
\]
converges in probability to $0$ on the event $E(\mathbf{t},p,\delta)$. To finish the proof, it suffices to show that for all $j=1,\ldots,n$, the following converges to $0$ in probability:
\begin{equation}\label{laststep}
\sum_{k=0}^{T-1} \partial_j f(Y(t_k))\big(\delta_k M^j\,\delta_k P - \delta_k \langle M^j, P \rangle\big).
\end{equation}
Since nothing in (\ref{laststep}) depends on the event $E(\mathbf{t},p,\delta)$, we now drop the requirement that we are on said event. By localization, we may assume $Y$, $M$, and $P$ take values in a compact set and that the quadratic variations of $M^j$ and $P$ are uniformly bounded. Under these assumptions, we claim that the term (\ref{laststep}) converges to $0$ in $L^2$. 

To see this, note that after squaring the term (\ref{laststep}), the cross terms resulting from the sum in $k$ vanish in expectation. Therefore, it suffices to bound
\begin{equation}\label{lastest}
\sum_{k=0}^{T-1}\partial_j f(Y(t_k))^2\big(\delta_k M^j\,\delta_k P - \delta_k \langle M^j, P \rangle \big)^2.
\end{equation}
We may bound the partial derivatives of $f$ by a constant. Define the term 
\[
D(t,\Delta) = \max_{\substack{j=1,...,n \\ s,\tilde{s} \in [0,t],|s-\tilde{s}|\leq\Delta}} (M^j_s - M^j_{\tilde{s}})^2 + \max_{ s,\tilde{s} \in [0,t],|s-\tilde{s}|\leq\Delta}(P_s - P_{\tilde{s}})^2.
\]
Now, by the Itô product rule, the inequality $(a+b)^2 \leq 2a^2 + 2b^2$, and the Itô isometry, we have that 
\begin{equation}\notag
\begin{split}
\mathbb{E}\Big[\big(\delta_k M^j \delta_k P - \delta_k \langle M^j , P \rangle\big)^2\Big] &\leq 2\mathbb{E}\bigg[ \int_{t_k}^{t_{k+1}}(M_s^j - M^j_{t_k})^2\, \mathrm{d}\langle P \rangle_s +  \int_{t_k}^{t_{k+1}}(P_s - P_{t_k})^2 \,\mathrm{d}\langle M^j \rangle_s\bigg]\\
 &\leq 2\mathbb{E} \Big[D(t,\Delta) \big(\langle P \rangle_{t_{k+1}} - \langle P\rangle_{t_k}+\langle M^j \rangle_{t_{k+1}} - \langle M^j\rangle_{t_k} \big)\Big].
 \end{split}
\end{equation}
Therefore, in expectation, the term (\ref{lastest}) can be upper bounded by 
\[
C\mathbb{E}[D(t,\Delta)]
\]
which converges to $0$ by the Bounded Convergence Theorem. This concludes the proof of the lemma.
\end{proof}

%%%%%%%%%%%%%%%%%%%%%%%%%%%%%%%%%%%%%%
%\bibliographystyle{siam}
\bibliographystyle{amsalpha}
\bibliography{intertwining}

\end{document}